\newtheorem{theorem}{Theorem}[section]
\newtheorem{lemma}[theorem]{Lemma}
\theoremstyle{definition}
\theoremstyle{remark}
\numberwithin{equation}{section}
\begin{document}

\title[open cavity scattering problems]{An adaptive finite element
PML method for the open cavity scattering problems}

\author{Yanli Chen}
\address{Department of Mathematics, Northeastern University, Shenyang 110819,
China}
\email{chenyanli@mail.neu.edu.cn}

\author{Peijun Li}
\address{Department of Mathematics, Purdue University, West Lafayette, Indiana
47907, USA}
\email{lipeijun@math.purdue.edu}

\author{Xiaokai Yuan}
\address{School of Mathematical Sciences, Zhejiang University,
Hangzhou 310027, China.}
\email{yuan170@zju.edu.cn}

\thanks{The research of PL is supported in part by the NSF grant
DMS-1912704.}

\subjclass[2010]{}

\keywords{Electromagnetic cavity scattering, TM and TE polarizations, perfectly
matched layer, adaptive finite element method, a posteriori error estimates.}

\begin{abstract}
Consider the electromagnetic scattering of a time-harmonic plane wave by
an open cavity which is embedded in a perfectly electrically conducting infinite
ground plane. This paper is concerned with the numerical solutions of the
transverse electric and magnetic polarizations of the open cavity scattering
problems. In each polarization, the scattering problem is reduced equivalently
into a boundary value problem of the two-dimensional Helmholtz equation in a
bounded domain by using the transparent boundary condition (TBC). An a
posteriori estimate based adaptive finite element method with the perfectly
matched layer (PML) technique is developed to solve the reduced problem. The
estimate takes account both of the finite element approximation error and the
PML truncation error, where the latter is shown to decay exponentially with
respect to the PML medium parameter and the thickness of the PML layer.
Numerical experiments are presented and compared with the adaptive finite
element TBC method for both polarizations to illustrate the competitive behavior
of the proposed method.
\end{abstract}

\maketitle

\section{Introduction}

The phenomena of electromagnetic scattering by open cavities have attracted
much attention due to the significant industrial and military applications in
such areas as antenna synthesis and stealth design. The underlying scattering
problems have been extensively studied by many researchers in the engineering
and applied mathematics communities. We refer to the survey \cite{L18} and the
references cited therein for a comprehensive account on analysis, computation,
and optimal design of the cavity scattering problems.

In applications, one of particular interests is the radar cross section (RCS)
analysis, which aims at how to mitigate or amplify a signal. The RCS is a
quantity which measures the detectability of a target by radar system.
Deliberate control in the form of enhancement or reduction of the RCS of a
target is of high importance in the electromagnetic interference, especially in
the aircraft detection and the stealth design. Since the problems are imposed in
open domains and the solutions may have singularities, it presents challenging
and significant mathematical and computational questions on precise modeling and
accurate computing for the cavity scattering problems in order to successfully
implement any desired control of the RCS. This paper concerns the numerical
solutions of the open cavity scattering problems. We intend to develop an
adaptive finite element method with the perfect matched layer (PML) technique to
overcome the difficulties.

The PML technique was first proposed by B\'{e}renger for solving the
time-dependent Maxwell equations \cite{Berenger94}. Due to its effectiveness,
simplicity and flexibility, the PML technique is widely used in computational
wave propagation \cite{CM98Op, TC97, TY98, CZ17}. It has been
recognized as one of the most important and popular approaches for the domain
truncation. Under
the assumption that the exterior solution is composed of outgoing waves
only, the basic idea of the PML technique is to surround the domain of interest
with a layer of finite thickness of a special medium, which is designed
to either slow down or attenuate all the waves propagating into the PML layer
from inside of the computational domain. As either the PML parameter or the
thickness of the PML layer tends to infinity, the exponential convergence error
estimate was obtained in \cite{HSZ03, LS98} between the solution of the PML
problem and the solution of the Helmholtz-type scattering problem.
The convergence analysis of the PML problems for the three-dimensional
electromagnetic scattering was stuided in \cite{BW05, BP07, CC08, LWZ12}.

In practice, if we use a very thick PML layer and a uniform finite element mesh,
it requires very excessive grids points and hence involves more computational
cost. In contrast, if we choose a thin PML layer, it is inevitable to have a
rapid variation of the PML medium property, which renders a very fine mesh in
order to reach the desired accuracy. On the other hand, the solutions of the
open cavity scattering problem may have singularities due to the existence of
corners of cavities or the discontinuity of the dielectric coefficient for the
filling medium. These singularities slow down the speed of convergence if
uniform mesh refinements are applied. The a posteriori error estimate
based adaptive finite element method is an ideal tool to handle these issues.

A posteriori error estimators are computable quantities in terms of numerical
solutions and data. They measure the error between the numerical solution and
the exact solution without requiring any a priori information of the exact
solution. A reliable a posteriori error estimator plays a crucial role in an
adaptive procedure for mesh modification such as refinement or coarsening. Since
the work of Babu\v{s}ka and Rheinboldt \cite{BR78}, the study of adaptive method
based on a posteriori error estimator has become an active research topic in
scientific computing. Some relevant work can be found in \cite{BW85, AO93, AC91,
CD02, CD01} on the adaptive finite element method. We refer to \cite{M98, MS98,
CL05, CW03} for studies on the scattering problems by using the a posteriori
error estimate based adaptive finite element method.

Motivated by the work of Chen and Liu \cite{CL05}, we develop an adaptive
procedure, which combines the finite element method and the PML technique, to
solve the open cavity scattering problems. Specifically, we consider the
electromagnetic scattering of a time-harmonic plane wave by an open cavity
embedded in an infinite ground plane. The ground plane and the cavity wall are
assumed to be perfect electric conductors. The cavity is assumed to be filled
with some inhomogeneous medium, which may protrude out of the cavity to the
upper half-space in a finite extend. The upper half-space above the ground plane
and the protruding part of the cavity is assumed to be filled with some
homogeneous medium. By assuming invariance of the cavity in the $x_3$ direction,
we consider two fundamental polarizations: transverse magnetic (TM) and
transverse electric (TE) polarizations, where the three-dimensional Maxwell
equations can be reduced to the two-dimensional Helmholtz equation. We restrict
our attention to the numerical solutions of the TM and TE polarizations. In each
polarization, the scattering problem is reduced equivalently into a boundary
value problem of the two-dimensional Helmholtz equation in a bounded domain by
using the transparent boundary condition. Computationally, the PML
technique is utilized to truncate the infinite half-space above the ground plane
and the homogeneous Dirichlet boundary condition is imposed on the outer
boundary of the PML layer. The a posteriori error estimate is deduced between
the solution of the original scattering problem and the finite element solution
of the truncated PML problem. The a posteriori error estimate takes account both
of the finite element discretization error and the truncation error of the PML
method. The PML truncation error has a nice feature of exponential decay in
terms of the PML medium parameter and the thickness of the layer. Based on this
property, the proper PML medium parameter and the thickness of the layer can be
chosen to make the PML error negligible compared with the finite element
discretization error. Once the PML region and the medium property are fixed, the
finite element discretization error is used to design the adaptive strategy.

We point out a closely related work \cite{YBL20}, where an adaptive finite
element method with transparent boundary condition (TBC) was developed for
solving the open cavity scattering problems. Since the nonlocal TBC is
directly used to truncate the open domain, it does not require a layer of
artificially designed absorbing medium to enclose the domain of interest, which
makes the TBC method different from the PML approach. But the TBC is given as
an infinite series and needs to be truncated into a sum of finitely many terms
in computation. Due to the simplicity in the implementation of the PML method,
this work provides a viable alternative to the adaptive finite element TBC
method for solving the open cavity scattering problems. Numerical experiments
are presented and compared with the adaptive finite element TBC method for both
polarizations to illustrate the competitive behavior of the adaptive finite
element PML method.

The outline of this paper is as follows. In Section 2, we introduce the problem
formulation, where the governing equations are given for the TM and TE
polarizations. Sections 3 and 4 are devoted to the analysis of the TM and TE
polarizations, respectively. Topics are organized to address the variational
problem, the PML problem and its convergence, the finite element approximation,
the a posteriori error analysis for the discrete truncated PML problem, and the
adaptive finite element algorithm. In Section 5, some numerical examples are
presented to illustrate the performance of the proposed method. The paper is
concluded with some general remarks in Section 6.

\section{Problem formulation}

Let us first specify the problem geometry which is shown in Figure
\ref{geometry}. Denote by $D\subset \mathbb{R}^2$ the cross section of an
$x_3$-invariant cavity with a Lipschitz continuous boundary $\partial
D=S\cup\Gamma$, where $S$ refers to as the cavity wall and $\Gamma$ is the
opening of the cavity. We assume that the cavity wall $S$ is a perfect electric
conductor and the opening $\Gamma$ is aligned with the perfectly electrically
conducting infinite ground plane $\Gamma_g$. The cavity may be filled with some
inhomogeneous medium, which can be characterized by the dielectric permittivity
$\epsilon$ and the magnetic permeability $\mu$. Moreover, the medium may
protrude from the cavity into the upper half-space. In this case, the cavity is
called an overfilled cavity. Let $B_R^+$ and $B_{\rho}^+$ be the upper
half-discs with radii $R$ and $\rho$, where $\rho>R>0$. Denote by $\Gamma_R^+$
and $\Gamma_{\rho}^+$ the upper semi-circles. The radius $R$ can be chosen
large enough such that the upper half-disc $B_R^+$ can enclose the possibly
protruding inhomogeneous medium from the cavity. The infinite exterior domain
$\mathbb R^2\setminus\overline{B_R^+}$ is assumed to be filled with some
homogeneous medium with a constant dielectric permittivity $\epsilon_0$ and a
constant magnetic permeability $\mu_0$.

\begin{figure}
\centering
\includegraphics[width=0.45\textwidth]{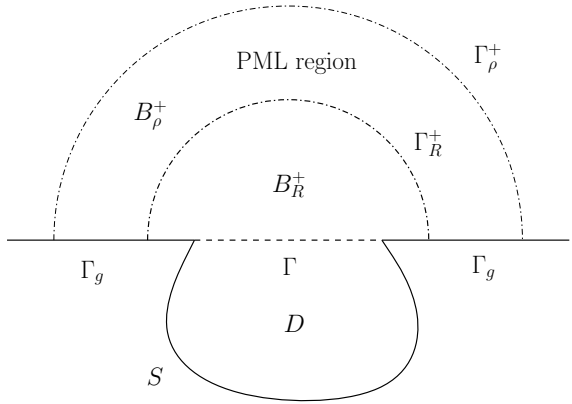}
\caption{Schematic of the open cavity scattering problem.}
\label{geometry}
\end{figure}

Since the structure is invariant the $x_3$-axis, we consider two fundamental
polarizations: transverse magnetic (TM) polarization and transverse electric
(TE) polarization. The three-dimensional Maxwell equations can be reduced to
the two-dimensional Helmholtz equation under these two modes. In the TM
polarization, the magnetic field is transverse to the $x_3$-axis and the
electric field has the form $\boldsymbol{E}(x_1,x_2)=(0,0,u(x_1,x_2))^\top$,
where the scalar function $u$ satisfies
\begin{equation}\label{TM-P1}
\begin{cases}
\Delta u+\kappa^2u=0&\quad\text{in}~\mathbb{R}^2_{+}\cup D,\\
 u=0&\quad\text{on}~\Gamma_g\cup S,
\end{cases}
\end{equation}
where $\kappa=\omega(\varepsilon\mu)^{1/2}$ is the wave number and $\omega>0$ is
the angular frequency. In the TE polarization, the electric field is transverse
to the $x_3$-axis and the magnetic field takes the form
$\boldsymbol{H}(x_1,x_2)=(0,0,u(x_1,x_2))^\top$, where $u$ satisfies
\begin{equation}\label{TE-P1}
\begin{cases}
\nabla\cdot(\kappa^{-2}\nabla u)+u=0&\quad\text{in}~\mathbb{R}^2_{+}\cup D,\\
\partial_{\nu}u=0&\quad\text{on}~\Gamma_g\cup S,
\end{cases}
\end{equation}
where $\nu$ is the unit outward normal vector to $\Gamma_g\cup S$.\par
Consider the incidence of a plane wave
\begin{equation*}
  u^{\rm i}(x_1,x_2)=e^{i(k_1 x_1- k_2 x_2)},
\end{equation*}
which is sent from the above to impinge the cavity. Here $k_1=\kappa_0
\sin\theta, k_2=\kappa_0\cos \theta,\theta\in(-\pi/2,\pi/2)$ is the angle of
the incidence, and $\kappa_0=\omega(\varepsilon_0\mu_0)^{1/2}$ is the wavenumber
in the free space. Due to the perfectly electrically conducting ground plane,
the reflected field in the TM polarization is
\begin{equation*}
  u^{\rm r}(x_1,x_2)=-e^{i(k_1 x_1+ k_2 x_2)},
\end{equation*}
while the reflected field in the TE polarization is
\begin{equation*}
  u^{\rm r}(x_1,x_2)=e^{i(k_1 x_1+ k_2 x_2)}.
\end{equation*}
Let the reference field $u^{\rm ref}$ be the superposition of the incident field
and the reflected field, i.e., $u^{\rm ref}=u^{\rm i}+u^{\rm r}$. The total
field $u$ consists of the reference field $u^{\rm ref}$ and the scattered field
$u^{\rm s}$, i.e.,
\begin{equation*}
  u=u^{\rm ref}+u^{\rm s}.
\end{equation*}
In addition, the scattered field $u^{\rm s}$ is required to satisfy the Sommerfeld radiation condition
\begin{equation}\label{TM-SRC}
  \lim_{r=|x|\rightarrow \infty} r^{1/2}(\partial_r u^{\rm s}-i\kappa_0u^{\rm s})=0.
\end{equation}

\section{TM polarization}

In this section, we consider the TM polarization. First the transparent
boundary condition is introduced to reduce the open cavity problem into a
boundary value problem in a bounded domain. Next the variational problem is
described, and the PML problem and its convergence are discussed. Then the
finite element approximation and the a posteriori error estimate are studied.
Finally the adaptive finite element method with PML is presented for solving
the discrete PML problem.

\subsection{The variational problem}

It can be verified from \eqref{TM-P1} that the scattered field
$u^{\rm s}$ satisfies the Helmholtz equation
\begin{equation}\label{TM-us}
\Delta u^{\rm s}+\kappa_0^2u^{\rm s}=0
\qquad\mathrm{in}~\mathbb{R}^2_{+}\setminus \overline{B_R^+}.
\end{equation}
Based on the radiation condition \eqref{TM-SRC}, we know that the solution of
\eqref{TM-us} has the Fourier series expansion
\begin{equation}\label{TM-usF1}
u^{\rm s}(r,\phi)=\sum_{n=0}^{\infty}\frac{H_n^{(1)}(\kappa_0
r)}{H_n^{(1)}(\kappa_0 R)}(a_n\sin (n\phi)+b_n\cos (n\phi)),\quad r\geq R,
\end{equation}
where $H_n^{(1)}$ is the Hankel function of the first kind with order $n$.
Noting the fact $u=0$ and $u^{\rm ref}=0$ on $\Gamma_g$, we have $u^{\rm
s}(r,0)=u^{\rm s}(r,\pi)=0$, which implies $b_n=0$ and \eqref{TM-usF1} reduces
to
\begin{equation}\label{TM-usF}
u^{\rm s}(r,\phi)=\sum_{n=1}^{\infty}\frac{H_n^{(1)}(\kappa_0 r)}{H_n^{(1)}(\kappa_0 R)}a_n\sin (n\phi),\quad r\geq R.
\end{equation}
Taking the partial derivative of \eqref{TM-usF} with respect to $r$ and evaluating it at $r=R$ yields
\begin{equation}\label{TM-usd}
\partial_r u^{\rm s}(R,\phi)=\kappa_0
\sum_{n=1}^{\infty}\frac{{H_n^{(1)'}}(\kappa_0 R)}{H_n^{(1)}(\kappa_0
R)}a_n\sin (n\phi).
\end{equation}\par
Let $L^2_{\rm TM}(\Gamma_R^+):=\{u\in L^2(\Gamma_R^+):u(R,0)=u(R,\pi)=0\}$. For any $u\in L^2_{\rm TM}(\Gamma_R^+)$, it has the Fourier series expansion
\begin{equation*}
  u(R,\phi)=\sum_{n=1}^{\infty}a_n\sin (n\phi),\quad a_n=\frac{2}{\pi}\int_0^{\pi}u(R,\phi)\sin (n \phi){\rm d}\phi.
\end{equation*}
Define the trace function space $H^s_{\rm TM}(\Gamma_R^+):=\{u\in L^2_{\rm TM}(\Gamma_R^+):\|u\|_{H^s_{\rm TM}(\Gamma_R^+)}\leq \infty\}$, where the $H^s_{\rm TM}(\Gamma_R^+)$ norm is given by
\begin{equation*}
  \|u\|_{H^s_{\rm TM}(\Gamma_R^+)}=\left(\sum_{n=1}^{\infty}(1+n^2)^s|a_n|^2\right)^{1/2}.
\end{equation*}
It is clear that the dual space of $H^s_{\rm TM}(\Gamma_R^+)$ is $H^{-s}_{\rm TM}(\Gamma_R^+)$ with respect to the scalar product in $L^2(\Gamma_R^+)$ given by
\begin{equation*}
  \langle u,v\rangle_{\Gamma_R^+}=\int_{\Gamma_R^+}u\bar{v}{\rm d}s.
\end{equation*}

Introduce the DtN operator
\begin{equation}\label{TM-DtN}
 (\mathscr{B}_{\rm
TM}u)(R,\phi)=\kappa_0\sum_{n=1}^{\infty}\frac{{H_n^{(1)'}}(\kappa_0
R)}{H_n^{(1)}(\kappa_0 R)}a_n\sin (n\phi)\quad\text{on} ~ \Gamma_R^+.
\end{equation}
It is shown in \cite{W06} that the boundary operator $\mathscr{B}_{\rm
TM}:H^{1/2}_{\rm TM}(\Gamma_R^+)\rightarrow H^{-1/2}_{\rm TM}(\Gamma_R^+)$ is
continuous. Using \eqref{TM-usd}--\eqref{TM-DtN}, we obtain the
transparent boundary condition for the scattered field $u^{\rm s}$:
\begin{equation*}
  \partial_r u^{\rm s}=\mathscr{B}_{\rm TM}u^{\rm s}\quad{\rm on}~\Gamma_R^+,
\end{equation*}
which can be equivalently imposed for the total field $u$:
\begin{equation*}
\partial_r u=\mathscr{B}_{\rm TM}u+f\quad {\rm on}~\Gamma_R^+,
\end{equation*}
where $f=\partial_r u^{\rm ref}-\mathscr{B}_{\rm TM}u^{\rm ref}$.

Let $\Omega=B_R^+\cup D$. The open cavity scattering problem can be reduced to
the following boundary value problem:
\begin{equation*}
\begin{cases}
 \Delta u+\kappa^2u=0&\quad\text{in}~\Omega,\\
  u=0&\quad\text{on}~\Gamma_g\cup S,\\
  \partial_r u=\mathscr{B}_{\rm TM}u+f&\quad \text{on}~\Gamma_R^+,
  \end{cases}
\end{equation*}
which has the variational formulation: find $u\in H_S^1(\Omega)=\{u\in
H^1(\Omega):u=0 ~\text{on}~\Gamma_g\cup S\}$ such that
\begin{equation}\label{TM-VF}
a_{\rm TM}(u,v)=\langle f,v\rangle_{\Gamma_R^+}\quad \forall\, v\in
H_S^1(\Omega),
\end{equation}
where the sesquilinear form $a_{\rm TM}(\cdot,\cdot): H^1(\Omega)\times
H^1(\Omega)\to\mathbb C$ is defined by
\begin{equation}\label{TM-SQL}
a_{\rm TM}(u,v)=\int_{\Omega}\left(\nabla u\cdot \nabla \bar{v}-\kappa^2
u\bar{v}\right){\rm d}x-\langle \mathscr{B}_{\rm TM} u,v\rangle_{\Gamma_R^+}.
\end{equation}

The following result states the well-posedness of the variational
problem \eqref{TM-VF}. The proof can be found in \cite{W06}.

\begin{theorem}
The variational problem \eqref{TM-VF} has a unique weak solution in
$H_S^1(\Omega)$, which satisfies the estimate
\begin{equation*}
  \|u\|_{H^1(\Omega)}\leq C\|f\|_{H^{-1/2}_{\rm TM}(\Gamma_R^+)},
\end{equation*}
where $C>0$ is a constant.
\end{theorem}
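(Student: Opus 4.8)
The plan is to establish well-posedness through the Fredholm alternative, thereby reducing existence to uniqueness, and to obtain the stability bound as a byproduct of the resulting bounded inverse. The starting point is to analyze the sesquilinear form $a_{\rm TM}$ in \eqref{TM-SQL} and to show that it differs from a coercive form only by a compact perturbation, i.e.\ that a G{\aa}rding inequality holds. The essential ingredient is the behavior of the DtN boundary term $\langle \mathscr{B}_{\rm TM}u,u\rangle_{\Gamma_R^+}$, which I would study through its explicit Fourier symbol $\kappa_0 H_n^{(1)'}(\kappa_0 R)/H_n^{(1)}(\kappa_0 R)$.

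First I would use the asymptotics of this ratio: for large $n$ it behaves like $-n/(\kappa_0 R)$, so that $-\mathrm{Re}\,\langle \mathscr{B}_{\rm TM}u,u\rangle_{\Gamma_R^+}$ controls, up to finitely many low-order modes, a positive multiple of the $H^{1/2}_{\rm TM}(\Gamma_R^+)$ seminorm of the trace of $u$. Accordingly I would split $\mathscr{B}_{\rm TM}=\mathscr{B}_{\rm TM}^{+}+\mathscr{K}$, where $-\langle \mathscr{B}_{\rm TM}^{+}u,u\rangle_{\Gamma_R^+}\ge 0$ carries the leading behavior and $\mathscr{K}\colon H^{1/2}_{\rm TM}(\Gamma_R^+)\to H^{-1/2}_{\rm TM}(\Gamma_R^+)$ is compact. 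Combined with the Rellich compact embedding $H^1(\Omega)\hookrightarrow L^2(\Omega)$ applied to the term $-\kappa^2 u\bar u$ and to the trace term, this yields
\begin{equation*}
\mathrm{Re}\,a_{\rm TM}(u,u)\ge c_0\|u\|_{H^1(\Omega)}^2-C_0\|u\|_{L^2(\Omega)}^2
\end{equation*}
for some constants $c_0,C_0>0$. Hence $a_{\rm TM}$ is the sum of a coercive (Lax--Milgram) form and a compact operator, so the associated solution operator is Fredholm of index zero.

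It then remains to prove uniqueness. I would set $f=0$, take $v=u$ in \eqref{TM-VF}, and consider the imaginary part. For real $\kappa$ the volume integral is real, so $\mathrm{Im}\,a_{\rm TM}(u,u)=-\mathrm{Im}\,\langle \mathscr{B}_{\rm TM}u,u\rangle_{\Gamma_R^+}$. Using the Wronskian identity for Bessel functions one shows $\mathrm{Im}\bigl(H_n^{(1)'}(\kappa_0 R)/H_n^{(1)}(\kappa_0 R)\bigr)>0$ for every $n$, whence $\mathrm{Im}\,\langle \mathscr{B}_{\rm TM}u,u\rangle_{\Gamma_R^+}=0$ forces every Fourier coefficient $a_n$ of $u|_{\Gamma_R^+}$ to vanish. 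Consequently $u=0$ and $\partial_r u=\mathscr{B}_{\rm TM}u=0$ on $\Gamma_R^+$, so $u$ has vanishing Cauchy data there. Unique continuation for the Helmholtz equation (the coefficients are constant near $\Gamma_R^+$, so $u\equiv 0$ in a neighborhood and then throughout the connected domain $\Omega$) gives $u\equiv 0$. By the Fredholm alternative, existence and uniqueness of the weak solution follow.

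Finally, the estimate $\|u\|_{H^1(\Omega)}\le C\|f\|_{H^{-1/2}_{\rm TM}(\Gamma_R^+)}$ is a consequence of the bounded invertibility of the solution operator just established; alternatively it follows from the standard contradiction argument, in which a sequence of normalized solutions with vanishing data is shown, via the G{\aa}rding inequality and compactness, to converge to a nontrivial solution of the homogeneous problem, contradicting uniqueness. The main obstacle in this programme is the spectral analysis of the DtN symbol, namely establishing both the strict positivity of the imaginary part (needed for uniqueness) and the correct coercive leading order of its negative real part (needed for the G{\aa}rding inequality); once these sign properties are in hand, the remaining steps are routine.
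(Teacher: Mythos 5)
Your proof is correct and follows essentially the same standard route as the one the paper relies on (it cites Wood \cite{W06} rather than giving details): a G\aa rding inequality based on the sign of $\mathrm{Re}\bigl(H_n^{(1)'}(\kappa_0 R)/H_n^{(1)}(\kappa_0 R)\bigr)$, the Fredholm alternative, uniqueness via the Wronskian identity $J_n(t)Y_n'(t)-J_n'(t)Y_n(t)=2/(\pi t)$ which yields $\mathrm{Im}\bigl(H_n^{(1)'}/H_n^{(1)}\bigr)>0$, and unique continuation from vanishing Cauchy data on $\Gamma_R^+$. One small remark: the paper permits lossy fillings with $\mathrm{Im}\,\kappa^2\geq 0$ (e.g.\ $\epsilon=4+\mathrm{i}$ in Example 1), so your restriction to real $\kappa$ in the uniqueness step should be dropped, but this only strengthens the argument since the volume term $-\int_{\Omega}\mathrm{Im}(\kappa^2)|u|^2\,\mathrm{d}x$ then contributes with the same sign as $-\mathrm{Im}\langle\mathscr{B}_{\rm TM}u,u\rangle_{\Gamma_R^+}$.
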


It follows from the general theory in Babu$\check{\rm s}$ka and Aziz
\cite[Chapter 5]{BA73} that there exists a constant $C>0$ such that the
following inf-sup condition holds:
\begin{equation}\label{TM-is1}
\sup\limits_{0\neq v\in H_S^1(\Omega)}\frac{|a_{\rm
TM}(u, v)|}{\|v\|_{H^1(\Omega)}}\geq
C\|u\|_{H^1(\Omega)}\quad \forall\, u \in H_S^1(\Omega).
\end{equation}

\subsection{The PML problem}

Let $\Omega^{\rm PML}=\{x\in \mathbb{R}^2_+: R<|x|<\rho\}$ be the PML region
which encloses the bounded domain $\Omega$ in the upper half-space. Denote by
$\Omega_{\rho}=B_{\rho}^+\cup D$ the computational domain in which the
truncated PML problem is formulated.

Define the PML parameters by using the complex coordinate stretching
\begin{equation}\label{rtilde-r}
\tilde{r}=\int_0^r\alpha(t){\rm d}t=r\beta(r),
\end{equation}
where $\alpha(r)=1+i\sigma(r)$. In practice, $\sigma$ is usually taken as a
power function
\begin{equation*}
  \sigma(r)=
\begin{cases}
 0, &0\leq r <R,\\
  \sigma_0\big(\frac{r-R}{\rho-R}\big)^m,&r\geq R,
  \end{cases}
\end{equation*}
where $\sigma_0$ is a positive constant and $m\geq 1$ is an integer. It
can be seen from \eqref{rtilde-r} that
\[
\beta(r)=1+i\hat{\sigma}(r), \quad
\hat{\sigma}(r)=\frac{1}{r}\int_R^r\sigma(t){\rm d}t.
\]

In the polar coordinates, the gradient and divergence operators can be
written as
\begin{align}\label{diver}
\nabla u=\partial_r u \boldsymbol{e}_r+\frac{1}{r}\partial_{\phi}u
\boldsymbol{e}_{\phi},\quad
\nabla\cdot
\boldsymbol{u}=\frac{1}{r}\partial_r (ru_r)+\frac{1}{r}\partial_{\phi}u_{\phi},
\end{align}
where $\boldsymbol{u}=u_r\boldsymbol{e}_r+u_{\phi}\boldsymbol{e}_{\phi}$ and
$\boldsymbol{e}_r=(\cos \phi, \sin \phi)^\top,
\boldsymbol{e}_{\phi}=(-\sin \phi,\cos \phi)^\top$. By
the chain rule and \eqref{rtilde-r}, a simple calculation yields
\begin{equation}\label{prt}
  \partial_{\tilde{r}}u=\partial_r u \left(\frac{{\rm d}r}{{\rm d}
\tilde{r}}\right)=\frac{1}{\alpha(r)}\partial_r u.
\end{equation}
Combining \eqref{diver} and \eqref{prt}, we introduce the modified gradient
operator
\begin{equation*}
\tilde{\nabla} u=\frac{1}{\alpha(r)}\partial_ru\boldsymbol{e}_r+\frac{1}{
r\beta(r)}\partial_{\phi}u\boldsymbol{e}_{\phi}.
\end{equation*}
It is easy to verify
\begin{eqnarray*}
 \tilde{\Delta}u=\frac{1}{r
\alpha(r)\beta(r)}\partial_r\left(\frac{r\beta(r)}{\alpha(r)}
\partial_r u\right)+\frac{1}{r\beta(r)}\partial_\phi
\left(\frac{1}{r\beta(r)} \partial_\phi u \right)
   =  \frac{1}{\alpha\beta}\nabla\cdot(A \nabla u),
\end{eqnarray*}
where
\begin{equation*}
A=\begin{bmatrix}
\frac{\beta(r)}{\alpha(r)}\cos^2\phi+\frac{\alpha(r)}
{\beta(r)}\sin^2\phi & \left(\frac{\beta(r)}{\alpha(r)}-\frac{\alpha(r)}{
\beta(r)} \right)\sin\phi\cos\phi\\
\left(\frac{\beta(r)}{\alpha(r)}-\frac{\alpha(r)}{\beta(r)}
\right)\sin\phi\cos\phi
&\frac{\beta(r)}{\alpha(r)}\sin^2\phi+\frac{\alpha(r)}{\beta(r)}\cos^2\phi
 \end{bmatrix}.
\end{equation*}

Hence we obtain the PML equation for the scattered field $u^{\rm
s,PML}$:
\begin{equation*}
\nabla\cdot(A \nabla u^{\rm s,PML})+\kappa_0^2\alpha\beta u^{\rm
s,PML}=0\quad\text{in}~\mathbb{R}^2_{+}\setminus\overline{B_R^+},
\end{equation*}
where $u^{\rm s,PML}$ is required to be uniformly bounded as $r=|x|\to\infty$.
In practice, the open domain $\mathbb{R}^2_{+}\setminus\overline{B_R^+}$ needs
to be truncated into a bounded domain. Replacing $r$ with $\tilde r$ in
\eqref{TM-usF} and noting the exponential decay of the Hankel functions with
a complex argument, we can observe that the scattered field $u^{\rm s,PML}$
decays exponentially in $\mathbb{R}^2_{+}\setminus\overline{B_R^+}$. Hence it
is reasonable to impose the Dirichlet boundary condition
\begin{equation*}
 u^{\rm s,PML}=0\quad {\rm on}~\Gamma_{\rho}^+.
\end{equation*}
We obtain the truncated PML problem
\begin{equation}\label{TM-PML-P1}
\begin{cases}
 \nabla\cdot(A \nabla u^{\rm PML})+\kappa^2\alpha\beta u^{\rm
PML}=F&\quad\text{in}~\Omega_\rho,\\
  u^{\rm PML}=0&\quad\text{on}~\Gamma_g\cup S,\\
   u^{\rm PML}=u^{\rm ref}&\quad \text{on}~\Gamma_\rho^+,
  \end{cases}
\end{equation}
where
\begin{equation*}
  F=
\begin{cases}
\nabla\cdot(A \nabla u^{\rm ref})+\kappa_0^2\alpha\beta u^{\rm
ref}&\mathrm{in}~\Omega^{\rm PML},\\
0& {\rm otherwise}.
 \end{cases}
\end{equation*}

Introduce another DtN operator $\hat{\mathscr{B}}_{\rm TM}: H^{1/2}_{\rm
TM}(\Gamma_R^+)\rightarrow H^{-1/2}_{\rm TM}(\Gamma_R^+)$ which defined as
follows: given $\zeta\in H^{1/2}_{\rm TM}(\Gamma_R^+)$,
\begin{equation*}
  \hat{\mathscr{B}}_{\rm TM} \zeta=\partial_r \xi|_{\Gamma_R^+},
\end{equation*}
where $\xi \in H^1(\Omega^{\rm PML})$ satisfies
\begin{equation*}
\begin{cases}
\nabla\cdot(A \nabla \xi)+\kappa^2\alpha\beta
\xi=0&\quad\text{in}~\Omega^{\rm PML},\\
\xi=\zeta&\quad\text{on}~\Gamma_R^+,\\
\xi =0&\quad \text{on}~\Gamma_g\cup\Gamma_\rho^+.
\end{cases}
\end{equation*}
Using the boundary condition
\begin{equation*}
\partial_r(u^{\rm PML}-u^{\rm ref})|_{\Gamma_R^+}=\hat{\mathscr{B}}_{\rm TM}
(u^{\rm PML}-u^{\rm ref}),
\end{equation*}
and noting $A=I, \alpha=\beta=1$ in $\Omega$, we reformulate
\eqref{TM-PML-P1} equivalently into the following boundary value problem:
\begin{equation}\label{TM-PML-P2}
\begin{cases}
 \Delta u^{\rm PML}+\kappa^2 u^{\rm
PML}=0&\quad\mathrm{in}~\Omega,\\
  u^{\rm PML}=0&\quad\mathrm{on}~\Gamma_g\cup S,\\
   \partial_r u^{\rm PML}=\hat{\mathscr{B}}_{\rm TM}u^{\rm PML}+\hat{f}&\quad
{\rm on}~\Gamma_R^+,
  \end{cases}
\end{equation}
where $\hat{f}=\partial_r u^{\rm ref}-\hat{\mathscr{B}}_{\rm TM}u^{\rm ref}$.
The weak formulation of the problem \eqref{TM-PML-P2} is to find $u^{\rm PML}\in
H_S^1(\Omega)$ such that
\begin{equation}\label{TM-PML-VF}
\hat{a}_{\rm TM}(u^{\rm PML},v)=\langle
\hat{f},v\rangle_{\Gamma_R^+}\quad \forall\, v \in H_S^1(\Omega),
\end{equation}
where the sesquilinear form $\hat{a}_{\rm TM}(\cdot, \cdot): H^1(\Omega)\times
H^1(\Omega)\rightarrow \mathbb{C}$ is defined as
\begin{equation*}
\hat{a}_{\rm TM}(u,v)=\int_{\Omega}\left(\nabla u\cdot \nabla
\bar{v}-\kappa^2 u\bar{v}\right){\rm d}x-\langle
\hat{\mathscr{B}}_{\rm TM} u,v\rangle_{\Gamma_R^+}.
\end{equation*}

\subsection{Convergence of the PML problem}

Consider a boundary value problem of the PML equation in $\Omega^{\rm PML}$:
\begin{equation}\label{TM-PML-P3}
\begin{cases}
 \nabla\cdot(A \nabla w)+\kappa_0^2\alpha\beta w=0&\quad\mathrm{in}~\Omega^{\rm
PML},\\
  w=0&\quad\mathrm{on}~\Gamma_g\cup \Gamma_R^+,\\
   w=q&\quad {\rm on}~\Gamma_{\rho}^+.
  \end{cases}
\end{equation}
Define $H^1_0(\Omega^{\rm PML})=\{u\in H^1(\Omega^{\rm PML}): u=0~\text{on}~
\Gamma_g\cup\Gamma_R^+\cup\Gamma_\rho^+\}$. Given $q\in H^{1/2}_{\rm
TM}(\Gamma_{\rho}^+)$, the weak formulation of \eqref{TM-PML-P3} is to find
$w\in H^1(\Omega^{\rm PML})$ such that $w=0~{\rm on}~ \Gamma_g\cup \Gamma_R^+,
w=q~{\rm on}~ \Gamma_{\rho}^+$ and
\begin{equation}\label{TM-PML-P9}
\hat{b}(w, v)=0 \quad \forall\,v\in H^1_0(\Omega^{\rm PML}),
\end{equation}
where the sesquilinear form $\hat{b}(\cdot, \cdot):H^1(\Omega^{\rm PML})\times
H^1(\Omega^{\rm PML})\to \mathbb{C}$ is
\begin{equation*}
\hat{b}(u, v)=\int_R^{\rho}\int_0^{\pi}\left(\frac{\beta
r}{\alpha}\partial_r u \partial_r \bar{v}+\frac{\alpha}{\beta
r} \partial_\phi u \partial_\phi
\bar{v}-\kappa_0^2\alpha\beta r u \bar{v}\right){\rm
d}r{\rm d}\phi.
\end{equation*}

As is discussed in \cite{CM98}, in general, the uniqueness of \eqref{TM-PML-P9}
can not be guaranteed due to the possible existence of eigenvalues which form a
discrete set. Since our focus is on the convergence analysis, we simply assume
that the PML problem \eqref{TM-PML-P9} has a unique solution in the PML region.

For any $u\in H^1(\Omega^{\rm PML})$, define
\begin{equation*}
\|u\|_{\ast,\Omega^{\rm
PML}}=\left[\int_R^{\rho}\int_0^{\pi}\left(\Big(\frac{
1+\sigma\hat{\sigma}}{ 1+\sigma^2}\Big)r |\partial_r
u|^2+\bigg(\frac{1+\sigma\hat{\sigma}}{1+\hat{\sigma}^2}\bigg)\frac{1}{r}
|\partial_\phi u|^2+
(1+\sigma\hat{\sigma})\kappa_0^2r|u|^2\right){\rm d}r{\rm d}\phi
\right]^{1/2}.
\end{equation*}
It is easy to show that the norm $\|\cdot\|_{\ast,\Omega^{\rm PML}}$ is
equivalent to the usual $H^1(\Omega^{\rm PML})$-norm. An application of the
general theory in \cite[Chapter 5]{BA73} implies that there exists a
positive constant $\hat C$ depending on $\Omega^{\rm PML}$ and $\kappa_0$ such
that
\begin{equation}\label{TM-is2}
\sup_{0\neq v\in H^1_0(\Omega^{\rm
PML})}\frac{|\hat{b}(u, v)|}{\|v\|_{\ast,\Omega^{\rm
PML}}}\geq\hat{C}\|u\|_{\ast,\Omega^{\rm PML}}\quad \forall\,u\in
H^1_0(\Omega^{\rm PML}).
\end{equation}

The following results play an important role in the convergence analysis. The
proof is similar to that of \cite[Theorem 2.4]{CL05} for solving the obstacle
scattering problem and is omitted here for brevity.

\begin{theorem}
There exists a constant $C>0$ independent of $\kappa_0, R, \rho$, and $\sigma_0$
such that the following estimates are satisfied:
\begin{eqnarray}
\label{TM-e1}\||\alpha|^{-1}\nabla w\|_{L^2(\Omega^{\rm PML})}
&\leq &
C\hat{C}^{-1}(1+\kappa_0R)|\alpha_0|\|q\|_{H^{1/2}_{\rm TM}(\Gamma_{\rho}^+)},\\
\label{TM-e2}\left\|\partial_rw\right\|_{H^{-1/2}_{\rm TM}(\Gamma_R^+)}&\leq &
C\hat{C}^{-1}(1+\kappa_0R)^2|\alpha_0|^2\|q\|_{H^{1/2}_{\rm
TM}(\Gamma_{\rho}^+)},
\end{eqnarray}
where $\hat C$ is given in \eqref{TM-is2} and $\alpha_0=1+i\sigma_0$.
\end{theorem}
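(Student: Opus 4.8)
The plan is to derive both estimates from the inf--sup condition \eqref{TM-is2}, after reducing to homogeneous boundary data, and then to obtain the normal-trace bound \eqref{TM-e2} by duality. Since $w$ does not belong to $H^1_0(\Omega^{\rm PML})$ --- it carries the datum $q$ on $\Gamma_\rho^+$ --- I would first subtract a lifting. Expanding $q(\rho,\phi)=\sum_{n\ge1}q_n\sin(n\phi)$, set $w_0(r,\phi)=\sum_{n\ge1}q_n\zeta_n(r)\sin(n\phi)$ with radial profiles obeying $\zeta_n(R)=0$, $\zeta_n(\rho)=1$, concentrated near $r=\rho$ with decay rate balanced against $n$ and $\kappa_0R$, and chosen so that the key lifting estimate $\|w_0\|_{\ast,\Omega^{\rm PML}}\le C(1+\kappa_0R)\|q\|_{H^{1/2}_{\rm TM}(\Gamma_\rho^+)}$ holds with $C$ independent of $\rho$ and $\sigma_0$. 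Then $\tilde w:=w-w_0\in H^1_0(\Omega^{\rm PML})$, and the weak formulation \eqref{TM-PML-P9} gives $\hat b(\tilde w,v)=-\hat b(w_0,v)$ for all $v\in H^1_0(\Omega^{\rm PML})$.

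Next I would record that $\hat b$ is continuous with respect to $\|\cdot\|_{\ast,\Omega^{\rm PML}}$ with constant proportional to $|\alpha_0|$. Indeed, writing $\alpha=1+i\sigma$, $\beta=1+i\hat\sigma$, one has $\mathrm{Re}(\beta/\alpha)=(1+\sigma\hat\sigma)/(1+\sigma^2)$ and $\mathrm{Re}(\alpha/\beta)=(1+\sigma\hat\sigma)/(1+\hat\sigma^2)$, and since $\hat\sigma\le\sigma\le\sigma_0$ the ratio of each modulus $|\beta/\alpha|$, $|\alpha/\beta|$, $|\alpha\beta|$ to the corresponding weight in $\|\cdot\|_{\ast}$ equals $\sqrt{(1+\sigma^2)(1+\hat\sigma^2)}/(1+\sigma\hat\sigma)\le|\alpha_0|=(1+\sigma_0^2)^{1/2}$; a weighted Cauchy--Schwarz inequality then yields $|\hat b(u,v)|\le C|\alpha_0|\,\|u\|_{\ast}\|v\|_{\ast}$. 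Applying \eqref{TM-is2} gives $\|\tilde w\|_{\ast}\le\hat C^{-1}\sup_v|\hat b(w_0,v)|/\|v\|_{\ast}\le C\hat C^{-1}|\alpha_0|\|w_0\|_{\ast}$, and with the triangle inequality this produces $\|w\|_{\ast}\le C\hat C^{-1}(1+\kappa_0R)|\alpha_0|\|q\|_{H^{1/2}_{\rm TM}(\Gamma_\rho^+)}$. Estimate \eqref{TM-e1} follows because $(1+\sigma^2)^{-1}\le(1+\sigma\hat\sigma)/(1+\sigma^2)$ and, using $\hat\sigma\le\sigma$, also $(1+\sigma^2)^{-1}\le(1+\sigma\hat\sigma)/(1+\hat\sigma^2)$, so that $\||\alpha|^{-1}\nabla w\|_{L^2(\Omega^{\rm PML})}\le\|w\|_{\ast,\Omega^{\rm PML}}$.

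For \eqref{TM-e2} I would argue by duality. Given $\psi\in H^{1/2}_{\rm TM}(\Gamma_R^+)$, construct a lifting $v_\psi\in H^1(\Omega^{\rm PML})$ with $v_\psi=\psi$ on $\Gamma_R^+$, $v_\psi=0$ on $\Gamma_g\cup\Gamma_\rho^+$, now concentrated near the inner circle $r=R$, where $\sigma=\hat\sigma=0$ and hence $\alpha=\beta=1$, so that $\|v_\psi\|_{\ast}\le C(1+\kappa_0R)\|\psi\|_{H^{1/2}_{\rm TM}(\Gamma_R^+)}$ with no factor of $|\alpha_0|$. Testing the PML equation against $\bar v_\psi$ and integrating by parts, the boundary contributions at $\Gamma_\rho^+$ and $\Gamma_g$ vanish, while at $\Gamma_R^+$ the flux coefficient $\beta r/\alpha$ equals $R$, which cancels the arc-length element $\mathrm ds=R\,\mathrm d\phi$; this gives the identity $\hat b(w,v_\psi)=-\langle\partial_r w,\psi\rangle_{\Gamma_R^+}$. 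Hence $|\langle\partial_r w,\psi\rangle_{\Gamma_R^+}|=|\hat b(w,v_\psi)|\le C|\alpha_0|\|w\|_{\ast}\|v_\psi\|_{\ast}$, and inserting the bounds on $\|w\|_{\ast}$ and $\|v_\psi\|_{\ast}$ and taking the supremum over $\psi$ yields \eqref{TM-e2}.

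The main obstacle is the construction of the two liftings with the sharp estimate $\|w_0\|_{\ast}\le C(1+\kappa_0R)\|q\|$ (and its counterpart for $v_\psi$), with $C$ uniform in $\rho$ and $\sigma_0$ and carrying no spurious power of $|\alpha_0|$: the single factor $|\alpha_0|$ in \eqref{TM-e1} must come solely from the continuity constant of $\hat b$. Achieving this requires concentrating each Fourier mode near the relevant boundary and balancing the radial decay rate against $n$ and $\kappa_0R$, so that the large angular weight $(1+\sigma\hat\sigma)/(1+\hat\sigma^2)$ is never sampled where $\sigma$ is comparable to $\sigma_0$. This is exactly the computation carried out for the obstacle problem in \cite[Theorem 2.4]{CL05}, and the remaining manipulations are routine.
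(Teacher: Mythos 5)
Your proposal follows exactly the route the paper itself takes: the paper omits the proof of this theorem, saying only that it ``is similar to that of \cite[Theorem 2.4]{CL05}'', and your argument --- reduction to homogeneous data by a boundary lifting, the inf--sup condition \eqref{TM-is2} combined with continuity of $\hat b$ in the $\|\cdot\|_{\ast,\Omega^{\rm PML}}$ norm with constant proportional to $|\alpha_0|$, and a duality/integration-by-parts step (valid since $\alpha=\beta=1$ and $A=I$ on $\Gamma_R^+$) to obtain the normal-trace bound \eqref{TM-e2} with the extra factor $(1+\kappa_0R)|\alpha_0|$ --- is precisely the Chen--Liu scheme, with the one genuinely delicate ingredient, the construction of the liftings with constants uniform in $\rho$ and $\sigma_0$, deferred to the same reference. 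Since the paper provides nothing beyond that citation, your sketch is consistent with, and indeed more explicit than, the paper's own treatment.
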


Following the idea in \cite{LS98}, for any function $f\in H^{1/2}_{\rm
TM}(\Gamma_R^+)$, we introduce the propagation operator $\mathscr
P_{\rm TM}:H^{1/2}_{\rm
TM}(\Gamma_R^+)\rightarrow H^{1/2}_{\rm TM}(\Gamma_{\rho}^+)$ defined by
\begin{equation*}
\mathscr
P_{\rm
TM}(f)=\sum_{n=1}^{\infty}\frac{H_n^{(1)}(\kappa_0\tilde{\rho})}{H_n^{(1)}
(\kappa_0R)}f_n\sin(n\phi),\quad f_n=\frac{2}{\pi}\int_0^{\pi}f(R,\phi)\sin(n
\phi){\rm d}\phi.
\end{equation*}
As shown in \cite{CL05}, the operator $\mathscr P_{\rm TM}:H^{1/2}_{\rm
TM}(\Gamma_R^+)\rightarrow H^{1/2}_{\rm TM}(\Gamma_{\rho}^+)$ is well defined
and satisfies the estimate
\begin{equation}\label{TM-e3}
\|\mathscr P_{\rm TM}(f)\|_{H^{1/2}_{\rm TM}(\Gamma_{\rho}^+)}\leq
e^{-\kappa_0\Im(\tilde{\rho})\left(1-\frac{R^2}{|\tilde{\rho}|^2}\right)^{1/2}}
\|f\|_{H^{1/2}_{\rm TM}(\Gamma_R^+)}\quad \forall\, \rho\geq R.
\end{equation}

\begin{lemma}\label{L:TBCC}
For any $f\in H^{1/2}_{\rm TM}(\Gamma_R^+)$, we have
\begin{equation*}
\|(\mathscr{B}_{\rm TM}-\hat{\mathscr{B}}_{\rm TM})f\|_{H^{-1/2}_{\rm
TM}(\Gamma_R^+)}\leq C\hat{C}^{-1}(1+\kappa_0R)^2|\alpha_0|^2e^{
-\kappa_0\Im(\tilde{\rho} )\left(1-\frac{R^2}{|\tilde{\rho}|^2}\right)^{1/2}}
\|f\|_{H^{1/2}_{\rm TM}(\Gamma_R^+)}.
\end{equation*}
\end{lemma}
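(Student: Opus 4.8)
The plan is to realize the difference of the two Dirichlet-to-Neumann maps as the normal derivative on $\Gamma_R^+$ of the solution to the auxiliary PML boundary value problem \eqref{TM-PML-P3}, whose boundary datum on $\Gamma_\rho^+$ is exactly the propagated trace $\mathscr{P}_{\rm TM}(f)$. Once that identification is in place, the lemma follows immediately by chaining the trace estimate \eqref{TM-e2} with the decay estimate \eqref{TM-e3} for $\mathscr{P}_{\rm TM}$.

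First I would introduce, for $f\in H^{1/2}_{\rm TM}(\Gamma_R^+)$ with Fourier coefficients $f_n$, the complex-stretched outgoing extension
\begin{equation*}
\tilde\xi(r,\phi)=\sum_{n=1}^{\infty}\frac{H_n^{(1)}(\kappa_0\tilde r)}{H_n^{(1)}(\kappa_0 R)}f_n\sin(n\phi),\qquad R\le r\le\rho.
\end{equation*}
Since $\tilde r$ is obtained from $r$ through the complex coordinate stretching \eqref{rtilde-r}, each term $H_n^{(1)}(\kappa_0\tilde r)\sin(n\phi)$ is the analytic continuation of an outgoing cylindrical wave and hence solves the PML equation $\nabla\cdot(A\nabla\tilde\xi)+\kappa_0^2\alpha\beta\tilde\xi=0$ in $\Omega^{\rm PML}$. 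Because $\sigma\equiv0$ on $[0,R]$, one has $\tilde R=R$ and $\alpha(R)=1$, so that $\tilde\xi=f$ and $\partial_r\tilde\xi=\mathscr{B}_{\rm TM}f$ on $\Gamma_R^+$, while $\tilde\xi=\mathscr{P}_{\rm TM}(f)$ on $\Gamma_\rho^+$ by the very definition of the propagation operator, and $\tilde\xi=0$ on $\Gamma_g$ since the $\sin(n\phi)$ vanish at $\phi=0,\pi$.

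Next let $\xi\in H^1(\Omega^{\rm PML})$ be the PML extension defining $\hat{\mathscr{B}}_{\rm TM}f=\partial_r\xi|_{\Gamma_R^+}$, namely the solution of the PML equation with $\xi=f$ on $\Gamma_R^+$ and $\xi=0$ on $\Gamma_g\cup\Gamma_\rho^+$. The difference $w=\tilde\xi-\xi$ then solves precisely the auxiliary problem \eqref{TM-PML-P3} with $q=\mathscr{P}_{\rm TM}(f)$: it satisfies the PML equation in $\Omega^{\rm PML}$, vanishes on $\Gamma_g\cup\Gamma_R^+$, and equals $\mathscr{P}_{\rm TM}(f)$ on $\Gamma_\rho^+$. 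Consequently
\begin{equation*}
(\mathscr{B}_{\rm TM}-\hat{\mathscr{B}}_{\rm TM})f=\partial_r\tilde\xi|_{\Gamma_R^+}-\partial_r\xi|_{\Gamma_R^+}=\partial_r w|_{\Gamma_R^+}.
\end{equation*}
Applying \eqref{TM-e2} to $w$ with $q=\mathscr{P}_{\rm TM}(f)$ and then bounding $\|\mathscr{P}_{\rm TM}(f)\|_{H^{1/2}_{\rm TM}(\Gamma_\rho^+)}$ by \eqref{TM-e3} yields the stated inequality.

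The delicate points are analytic rather than estimative: I would need to verify that $\tilde\xi$ genuinely belongs to $H^1(\Omega^{\rm PML})$ and that term-by-term differentiation of the series is legitimate, so that the trace identities on $\Gamma_R^+$ and $\Gamma_\rho^+$ hold in the correct Sobolev spaces and $w$ qualifies as a solution of \eqref{TM-PML-P3}. Both facts follow from the exponential decay in $r$ of $H_n^{(1)}(\kappa_0\tilde r)/H_n^{(1)}(\kappa_0 R)$ for complex $\tilde r$ with $\Im\tilde r>0$, the same mechanism already used to establish \eqref{TM-e3}. This convergence bookkeeping is the main obstacle to make fully rigorous, but it is routine given the groundwork laid before the lemma.
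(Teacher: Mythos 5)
Your proposal is correct and follows essentially the same route as the paper: the paper's proof likewise identifies $(\mathscr{B}_{\rm TM}-\hat{\mathscr{B}}_{\rm TM})f=\partial_r w|_{\Gamma_R^+}$ with $w$ solving \eqref{TM-PML-P3} for the datum $q=\mathscr{P}_{\rm TM}(f)$ and then chains \eqref{TM-e2} with \eqref{TM-e3}. The only difference is that you spell out, via the stretched outgoing extension $\tilde\xi$ built from $H_n^{(1)}(\kappa_0\tilde r)/H_n^{(1)}(\kappa_0 R)$, the identification that the paper asserts directly from the definitions of the two DtN operators (deferring, as in \cite{CL05}, the convergence bookkeeping you flag at the end).
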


\begin{proof}
For any $f\in H^{1/2}_{\rm TM}(\Gamma_R^+)$, it follows the definitions of $\mathscr{B}_{\rm TM}$ and $\hat{\mathscr{B}}_{\rm TM}$ that
\begin{equation*}
(\mathscr{B}_{\rm TM}-\hat{\mathscr{B}}_{\rm TM})f=\partial_rw|_{\Gamma_R^+},
\end{equation*}
where $w\in H^1(\Omega^{\rm PML})$ satisfies
\begin{equation*}
\begin{cases}
 \nabla\cdot(A \nabla w)+\kappa_0^2\alpha\beta w=0&\quad\mathrm{in}~\Omega^{\rm
PML},\\
  w=0&\quad\mathrm{on}~\Gamma_g\cup \Gamma_R^+,\\
   w=\mathscr P_{\rm TM}(f)&\quad {\rm on}~\Gamma_{\rho}^+.
  \end{cases}
\end{equation*}
Using \eqref{TM-e2}--\eqref{TM-e3} yields
\begin{eqnarray*}
\left\|\partial_r w\right\|_{H^{-1/2}_{\rm TM}(\Gamma_R^+)} &\leq&
C\hat{C}^{-1}(1+\kappa_0R)^2|\alpha_0|^2\|\mathscr P_{\rm TM}(f)\|_{H^{1/2}_{\rm
TM}(\Gamma_{\rho}^+)}\\
&\leq& C\hat{C}^{-1}(1+\kappa_0R)^2|\alpha_0|^2e^{-\kappa_0\Im\left(\tilde{\rho}
)(1-\frac{R^2}{|\tilde{\rho}|^2}\right)^{1/2}}\|f\|_{H^{1/2}_{\rm
TM}(\Gamma_R^+)},
\end{eqnarray*}
which completes the proof.
\end{proof}

\begin{theorem}
For sufficiently large $\sigma_0>0$, the PML problem \eqref{TM-PML-VF} has a
unique solution $u^{\rm PML}\in H^1_S(\Omega)$. Moreover, we have the
following estimate:
\begin{equation*}
\|u-u^{\rm PML}\|_{H^1(\Omega)}\leq C\hat{C}^{-1}(1+\kappa_0R)^2|\alpha_0|^2e^{
-\kappa_0\Im(\tilde{\rho})\left(1-\frac{R^2}{|\tilde{\rho}|^2}\right)^{1/2}}\|u^
{\rm PML}-u^{\rm ref}\|_{H^{1/2}_{\rm TM}(\Gamma_R^+)}.
\end{equation*}
\end{theorem}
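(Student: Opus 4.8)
The plan is to treat $\hat{a}_{\rm TM}$ as an exponentially small perturbation of $a_{\rm TM}$ and to exploit the inf-sup stability \eqref{TM-is1} of the latter twice: first to transfer well-posedness to the PML problem, and then to bound the modeling error $u-u^{\rm PML}$.

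First I would establish existence and uniqueness. Since $a_{\rm TM}$ and $\hat{a}_{\rm TM}$ differ only through the DtN term, one has the exact relation
\begin{equation*}
\hat{a}_{\rm TM}(u,v)-a_{\rm TM}(u,v)=\langle(\mathscr{B}_{\rm TM}-\hat{\mathscr{B}}_{\rm TM})u,v\rangle_{\Gamma_R^+}\quad\forall\,u,v\in H^1_S(\Omega).
\end{equation*}
Invoking Lemma~\ref{L:TBCC}, the duality pairing between $H^{-1/2}_{\rm TM}(\Gamma_R^+)$ and $H^{1/2}_{\rm TM}(\Gamma_R^+)$, and the trace inequality $\|w\|_{H^{1/2}_{\rm TM}(\Gamma_R^+)}\le C\|w\|_{H^1(\Omega)}$, the right-hand side is bounded by $\delta(\sigma_0)\|u\|_{H^1(\Omega)}\|v\|_{H^1(\Omega)}$, where $\delta(\sigma_0)=C\hat{C}^{-1}(1+\kappa_0R)^2|\alpha_0|^2 e^{-\kappa_0\Im(\tilde\rho)(1-R^2/|\tilde\rho|^2)^{1/2}}$. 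The decisive observation is that $\Im(\tilde\rho)=\rho\hat\sigma(\rho)$ grows linearly in $\sigma_0$, so the exponential factor dominates the polynomial prefactor $|\alpha_0|^2=1+\sigma_0^2$; hence $\delta(\sigma_0)\to 0$ as $\sigma_0\to\infty$, and for $\sigma_0$ large enough $\delta(\sigma_0)\le C/2$, with $C$ the inf-sup constant of \eqref{TM-is1}. A triangle inequality then gives $\hat{a}_{\rm TM}$ the inf-sup condition with constant $C/2$, and the Babu\v{s}ka--Aziz theory \cite[Chapter 5]{BA73} yields a unique $u^{\rm PML}\in H^1_S(\Omega)$.

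For the error estimate, the key step is an algebraic identity for $a_{\rm TM}(u-u^{\rm PML},\cdot)$. Using $a_{\rm TM}(u,v)=\langle f,v\rangle_{\Gamma_R^+}$ from \eqref{TM-VF}, $\hat{a}_{\rm TM}(u^{\rm PML},v)=\langle \hat{f},v\rangle_{\Gamma_R^+}$ from \eqref{TM-PML-VF}, the form relation above, and the fact that $f-\hat{f}=-(\mathscr{B}_{\rm TM}-\hat{\mathscr{B}}_{\rm TM})u^{\rm ref}$, the reference-field contributions telescope and leave
\begin{equation*}
a_{\rm TM}(u-u^{\rm PML},v)=\langle(\mathscr{B}_{\rm TM}-\hat{\mathscr{B}}_{\rm TM})(u^{\rm PML}-u^{\rm ref}),v\rangle_{\Gamma_R^+}\quad\forall\,v\in H^1_S(\Omega).
\end{equation*}
I would then apply Lemma~\ref{L:TBCC} with $u^{\rm PML}-u^{\rm ref}$ in place of $f$, together with the duality pairing and the trace inequality on $v$, to bound the right-hand side by $\delta(\sigma_0)\|u^{\rm PML}-u^{\rm ref}\|_{H^{1/2}_{\rm TM}(\Gamma_R^+)}\|v\|_{H^1(\Omega)}$. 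Dividing by $\|v\|_{H^1(\Omega)}$, taking the supremum over $v$, and invoking \eqref{TM-is1} for $u-u^{\rm PML}$ produces exactly the claimed bound.

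The main obstacle is the quantitative balance in the well-posedness step: one must verify that the exponential decay of the DtN difference genuinely overwhelms the polynomial-in-$\sigma_0$ growth of the prefactor, so that the perturbation constant can be made strictly smaller than the inf-sup constant of $a_{\rm TM}$; the rest is careful bookkeeping of the two sesquilinear forms and the telescoping of the reference-field terms in the identity. A minor point worth flagging is that the bound is implicit, since $u^{\rm PML}$ appears on the right-hand side, so strictly it certifies that the modeling error is controlled by the exponentially small factor times the boundary data $u^{\rm PML}-u^{\rm ref}$.
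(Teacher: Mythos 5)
Your proposal is correct and follows essentially the same route as the paper: the identity $a_{\rm TM}(u-u^{\rm PML},\varphi)=\langle(\mathscr{B}_{\rm TM}-\hat{\mathscr{B}}_{\rm TM})(u^{\rm PML}-u^{\rm ref}),\varphi\rangle_{\Gamma_R^+}$ obtained by telescoping $f-\hat f$, followed by Lemma~\ref{L:TBCC} and the inf-sup condition \eqref{TM-is1}, is exactly the paper's argument. The only difference is that you spell out the perturbation argument for well-posedness (exponential decay beating the polynomial growth of $|\alpha_0|^2$ for large $\sigma_0$), which the paper simply defers to the same argument in \cite[Theorem 2.4]{CW03}.
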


\begin{proof}
The existence of a unique solution can be shown by following the same arguments
in \cite[Theorem 2.4]{CW03}. Furthermore, by \eqref{TM-VF} and
\eqref{TM-PML-VF}, we have for any $\varphi \in H_S^1(\Omega)$ that
\begin{eqnarray*}
a_{\rm TM}(u-u^{\rm PML}, \varphi) &=&a_{\rm TM}(u, \varphi)-a_{\rm TM}(u^{\rm
PML}, \varphi)\\
&=&\langle f, \varphi\rangle_{\Gamma_R^+}-a_{\rm TM}(u^{\rm PML}, \varphi)  \\
&=& \langle f-\hat{f}, \varphi\rangle_{\Gamma_R^+}+\langle
\hat{f}, \varphi\rangle_{\Gamma_R^+}-a(u^{\rm PML}, \varphi) \\
&=& \langle (\hat{\mathscr{B}}_{\rm TM}-\mathscr{B}_{\rm TM})u^{\rm
ref}, \varphi\rangle_{\Gamma_R^+}+\hat{a}_{\rm TM}(u^{\rm PML}, \varphi)-a_{\rm
TM}(u^{\rm PML}, \varphi) \\
&=&\langle (\mathscr{B}_{\rm TM}-\hat{\mathscr{B}}_{\rm TM})(u^{\rm
PML}-u^{\rm ref}), \varphi\rangle_{\Gamma_R^+},
\end{eqnarray*}
which completes the proof after using Lemma \ref{L:TBCC} and
\eqref{TM-is1}.
\end{proof}

\subsection{Finite element approximation}

Define $ H^1_S(\Omega_{\rho})=\{u\in H^1(\Omega_{\rho}): u=0~{\rm on}
~\Gamma_g\cup S\}$. The weak formulation of \eqref{TM-PML-P1} is to
find $u^{\rm PML}\in H^1_S(\Omega_{\rho})$ and $u^{\rm PML}=u^{\rm
ref}~{\rm on}~\Gamma_{\rho}^+$ such that
\begin{equation}\label{TM-PML-P5}
b(u^{\rm PML}, v)=-\int_{\Omega_{\rho}}F\bar{v}{\rm d}x\quad \forall\,
v\in H^1_S(\Omega_{\rho}),
\end{equation}
where $H^1_0(\Omega_\rho)=\{u\in H^1(\Omega_\rho):
u=0~\text{on}~\Gamma_g\cup S\cup\Gamma_\rho^+\}$ and the
sesquilinear form $b(\cdot, \cdot):H^1(\Omega_{\rho})\times
H^1(\Omega_{\rho})\rightarrow \mathbb{C}$ is given by
 \begin{equation}\label{TM-PML-SQL}
  b(u, v)=\int_{\Omega_{\rho}}(A\nabla u\cdot
\nabla\bar{v}-\kappa^2\alpha\beta u\bar{v}){\rm d}x.
\end{equation}\par
Let $\mathcal{M}_h$ be a regular triangulation of $\Omega_{\rho}$, where $h$ denotes the maximum diameter
of all the elements in $\mathcal{M}_h$. To avoid being distracted from the main focus of
the a posteriori error analysis, we assume for simplicity that $\Gamma_{\rho}^+$
is polygonal to keep from using the isoparametric finite element space and
deriving the approximation error of the boundary $\Gamma_{\rho}^+$.

Let $V_h$ be the a conforming finite element space, i.e.,
\begin{equation*}
  V_h=\{v_h\in C(\bar{\Omega}_{\rho}):v_h|_K\in P_m(K), ~ \forall\, K\in
\mathcal{M}_h\},
\end{equation*}
where $m$ is a positive integer and $P_m(K)$ denotes the set of all polynomials
of degree no more than $m$. The finite element approximation to the variational
problem \eqref{TM-PML-P5} is to find $u_h\in V_h$ with $u_h=u^{\rm ref}~{\rm
on}~\Gamma_{\rho}^+$ such that
\begin{equation}\label{TM-PML-P7}
b(u_h,\psi_h)=-\int_{\Omega_{\rho}}F\bar{\psi}_h{\rm d}x \quad \forall\,
\psi_h\in V_{S, h},
\end{equation}
where $V_{S, h}=\{v_h\in V_h: v_h=0 ~{\rm on}~\Gamma_g\cup S\}$.\par
For sufficiently small $h$, the discrete inf-sup condition of the sesquilinear
form $b$ can be established by an argument of Schatz \cite{S74}. It follows from
the general theory in \cite{BA73} that the truncated variational problem
\eqref{TM-PML-P7} admits a unique solution. Since our focus is the a posteriori
error analysis and the associated adaptive algorithm, we assume that the
discrete problem \eqref{TM-PML-P7} has a unique solution $u_h\in V_h$.

\subsection{A posteriori error analysis}

For any triangular element $K\in \mathcal{M}_h$, denote by $h_K$ its diameter.
Let $\mathcal{B}_h$ denote the set of all the edges that do not lie on $\partial
\Omega_{\rho}$. For any $e\in \mathcal{B}_h$, $h_e$ denotes its length. For any
$K\in \mathcal{M}_h$, we introduce the residual
\begin{equation*}
  R_K(u)=\nabla\cdot(A\nabla u|_K)+\kappa^2\alpha\beta u|_K.
\end{equation*}
For any interior edge $e$, which is the common side of triangular elements
$K_1,K_2\in \mathcal{M}_h$, we define the jump residual across $e$ as
\begin{equation*}
J_e=-(A\nabla u_h|_{K_1}\cdot \nu_1+A\nabla u_h|_{K_2}\cdot \nu_2),
\end{equation*}
where $\nu_j$ is the unit outward normal vector on the boundary of $K_j,j=1,2$. Let
\begin{equation*}
\tilde{R}_K=
\begin{cases}
 R_K(u_h)&\quad{\rm if}~ K\in \mathcal{M}_h\cap \Omega,\\
  R_K(u_h-u^{\rm ref})&\quad {\rm if}~ K\in \mathcal{M}_h\cap \Omega^{\rm PML}.
  \end{cases}
\end{equation*}
For any triangle $K\in \mathcal{M}_h$, denote by $\eta_K$ the local error estimator as follows:
\begin{equation*}
\eta_K=\max_{x\in
K}w(x)\Big(\|h_K\tilde{R}_K\|_{L^2(K)}^2+\frac{1}{2}\sum_{ e\in \partial
K\cap \mathcal{B}_h}\|h_e^{1/2}J_e\|_{L^2(e)}^2\Big)^{1/2},
\end{equation*}
where the rescaling function
\begin{equation*}
w(x)=
\left\{
\begin{array}{ll}
 1&\quad{\rm if}~ x\in \bar{\Omega},\\
|\frac{\alpha}{\alpha_0}|e^{-\kappa\Im{\tilde{r}}\left(1-\frac{r^2}{
|\tilde{r}|^2}\right)^{1/2}}&\quad {\rm if}~ x\in \Omega^{\rm PML}.
  \end{array}\right.
\end{equation*}

For any $\varphi\in H^1(\Omega)$, let $\tilde{\varphi}$ be its extension in
$\Omega^{\rm PML}$ such that
\begin{equation}\label{TM-PML-P8}
\begin{cases}
 \nabla\cdot(\bar {A} \nabla
\tilde{\varphi})+\kappa_0^2\overline{\alpha\beta}\tilde{\varphi}
=0&\quad\mathrm{in}~\Omega^{\rm PML},\\
  \tilde{\varphi}=\varphi&\quad\mathrm{on}~\Gamma_R^+,\\
   \tilde{\varphi}=0&\quad {\rm on}~\Gamma_g\cup \Gamma_{\rho}^+.
  \end{cases}
\end{equation}
Repeating essentially the proofs of those in \cite[Lemmas 4.1 and
4.4]{CL05}, we may obtain the following two results on the extension.

\begin{lemma}\label{TML:L1}
For any $\varphi,\psi\in H^1(\Omega^{\rm PML})$, the following identity holds:
\begin{equation*}
\langle\hat{\mathscr{B}}_{\rm
TM}\varphi,\psi\rangle_{\Gamma_R^+}=\langle\hat{\mathscr{B}}_{\rm
TM}\bar{\psi},\bar{\varphi}\rangle_{\Gamma_R^+}.
\end{equation*}
\end{lemma}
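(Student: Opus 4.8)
The plan is to unwind the definition of $\hat{\mathscr B}_{\rm TM}$ and then exploit the fact that the PML operator $\nabla\cdot(A\nabla\,\cdot\,)+\kappa^2\alpha\beta\,\cdot\,$ is \emph{complex symmetric} rather than Hermitian, so that the right tool is the \emph{unconjugated} Green's second identity. Concretely, let $\xi_1\in H^1(\Omega^{\rm PML})$ solve the defining boundary value problem for $\hat{\mathscr B}_{\rm TM}$ with Dirichlet data $\varphi$ on $\Gamma_R^+$, and let $\xi_2\in H^1(\Omega^{\rm PML})$ solve the same problem with data $\bar\psi$ on $\Gamma_R^+$; both vanish on $\Gamma_g\cup\Gamma_\rho^+$ and satisfy $\nabla\cdot(A\nabla\xi_j)+\kappa^2\alpha\beta\,\xi_j=0$. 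By definition, $\hat{\mathscr B}_{\rm TM}\varphi=\partial_r\xi_1|_{\Gamma_R^+}$ and $\hat{\mathscr B}_{\rm TM}\bar\psi=\partial_r\xi_2|_{\Gamma_R^+}$, so both sides of the claimed identity depend only on the traces of $\varphi,\psi$ on $\Gamma_R^+$.

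First I would integrate by parts (without conjugation) in each of $\int_{\Omega^{\rm PML}}\nabla\cdot(A\nabla\xi_1)\,\xi_2\,{\rm d}x$ and $\int_{\Omega^{\rm PML}}\nabla\cdot(A\nabla\xi_2)\,\xi_1\,{\rm d}x$. Because $A$ is a symmetric matrix, the two resulting interior terms $\int_{\Omega^{\rm PML}}A\nabla\xi_1\cdot\nabla\xi_2\,{\rm d}x$ agree, and subtracting leaves
\begin{equation*}
\int_{\Omega^{\rm PML}}\bigl(\nabla\cdot(A\nabla\xi_1)\,\xi_2-\nabla\cdot(A\nabla\xi_2)\,\xi_1\bigr){\rm d}x=\int_{\partial\Omega^{\rm PML}}\bigl((A\nabla\xi_1\cdot\nu)\,\xi_2-(A\nabla\xi_2\cdot\nu)\,\xi_1\bigr){\rm d}s .
\end{equation*}
On the left I would replace $\nabla\cdot(A\nabla\xi_j)$ by $-\kappa^2\alpha\beta\,\xi_j$ using the governing equation; the two terms then cancel identically and the left-hand side is zero. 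On the right, $\partial\Omega^{\rm PML}=\Gamma_R^+\cup\Gamma_g\cup\Gamma_\rho^+$, and since both $\xi_1$ and $\xi_2$ vanish on $\Gamma_g\cup\Gamma_\rho^+$ the integrand is zero there, so only the contribution over $\Gamma_R^+$ survives.

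It remains to identify the surviving flux with the radial derivative. Here I would use that $\sigma(R)=\hat\sigma(R)=0$, so $\alpha=\beta=1$ and hence $A=I$ on $\Gamma_R^+$, together with the fact that the outward unit normal of $\Omega^{\rm PML}$ on its inner arc is $\nu=-\boldsymbol{e}_r$; therefore $A\nabla\xi_j\cdot\nu=-\partial_r\xi_j$ there, while the traces satisfy $\xi_1=\varphi$ and $\xi_2=\bar\psi$. Substituting yields $\int_{\Gamma_R^+}\bigl((\hat{\mathscr B}_{\rm TM}\varphi)\bar\psi-(\hat{\mathscr B}_{\rm TM}\bar\psi)\varphi\bigr){\rm d}s=0$, which is exactly $\langle\hat{\mathscr B}_{\rm TM}\varphi,\psi\rangle_{\Gamma_R^+}=\langle\hat{\mathscr B}_{\rm TM}\bar\psi,\bar\varphi\rangle_{\Gamma_R^+}$ upon recalling $\langle u,v\rangle_{\Gamma_R^+}=\int_{\Gamma_R^+}u\bar v\,{\rm d}s$. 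The main point to get right is to invoke the unconjugated Green's identity, so that the genuine symmetry $A=A^{\top}$ (and \emph{not} Hermitian symmetry, which the complex stretched coefficients do not enjoy) is what forces the interior terms to coincide; a secondary, routine technical point is that for $H^1$ solutions the boundary fluxes belong to $H^{-1/2}_{\rm TM}(\Gamma_R^+)$ and the integrals above are to be interpreted as the corresponding duality pairings.
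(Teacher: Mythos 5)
Your proposal is correct and takes essentially the same route as the paper's proof, which is not spelled out in the text but deferred to \cite[Lemma 4.1]{CL05}: namely, the unconjugated Green's second identity applied to the two auxiliary PML solutions, with the interior terms cancelling because $A=A^{\top}$ (complex symmetry, not Hermitian symmetry) and the governing equation, the flux vanishing on $\Gamma_g\cup\Gamma_\rho^+$, and the identification $A=I$, $\nu=-\boldsymbol{e}_r$ on $\Gamma_R^+$. You also correctly flag the only technical point, that $\partial_r\xi_j|_{\Gamma_R^+}\in H^{-1/2}_{\rm TM}(\Gamma_R^+)$ and the boundary terms are duality pairings, so nothing is missing.
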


\begin{lemma}\label{TML:L4}
For any $\varphi\in H^1(\Omega)$, let $\tilde{\varphi}$ be its extension in
$H^1(\Omega^{\rm PML})$ according to \eqref{TM-PML-P8}.
Then there exists a constant $C>0$ independent of $\kappa_0, R, \rho$ and
$\sigma_0$ such that
\begin{equation*}
\||\alpha|^{-1}\gamma\nabla\tilde{\varphi}\|_{L^2(\Omega^{\rm PML})}\leq
C\hat{C}^{-1}(1+\kappa_0 R)|\alpha_0|\|\varphi\|_{H^{1/2}(\Gamma_R^+)},
\end{equation*}
where $\gamma(r)=e^{\kappa_0\Im{\tilde{r}}\left(1-\frac{r^2}{|\tilde{r}|^2}\right)^{1/2}}$.
\end{lemma}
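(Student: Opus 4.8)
The plan is to prove the bound by the same strategy used for the outer-boundary estimate \eqref{TM-e1}, namely an explicit separation of variables in $\Omega^{\rm PML}$, but now applied to the inner-boundary data $\varphi$ on $\Gamma_R^+$ and reading off the exponential weight $\gamma$ directly from the radial profile. The weight $\gamma(r)=e^{\kappa_0\Im\tilde r(1-r^2/|\tilde r|^2)^{1/2}}$ is engineered to be the reciprocal of the mode-uniform decay rate of the radial PML solution, so that $\gamma\nabla\tilde\varphi$ stays controlled; extracting this cancellation uniformly in the Fourier mode is the heart of the matter.

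First I would remove the conjugation. Taking the complex conjugate of \eqref{TM-PML-P8} shows that $w:=\overline{\tilde\varphi}$ satisfies the unconjugated PML equation $\nabla\cdot(A\nabla w)+\kappa_0^2\alpha\beta w=0$ in $\Omega^{\rm PML}$, with $w=\bar\varphi$ on $\Gamma_R^+$ and $w=0$ on $\Gamma_g\cup\Gamma_\rho^+$. Since $|\nabla w|=|\nabla\tilde\varphi|$, the weight $|\alpha|^{-1}\gamma$ is real and unchanged, and $\|\bar\varphi\|_{H^{1/2}(\Gamma_R^+)}=\|\varphi\|_{H^{1/2}(\Gamma_R^+)}$, it suffices to estimate $\||\alpha|^{-1}\gamma\nabla w\|_{L^2(\Omega^{\rm PML})}$ for this standard-form inner-data problem.

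Next I would separate variables. Writing $\bar\varphi(R,\phi)=\sum_{n\ge1}c_n\sin(n\phi)$ with $\|\varphi\|_{H^{1/2}(\Gamma_R^+)}^2=\sum_{n\ge1}(1+n^2)^{1/2}|c_n|^2$, and using that $\alpha,\beta$ depend only on $r$, the solution takes the form $w(r,\phi)=\sum_{n\ge1}c_n\,\chi_n(r)/\chi_n(R)\,\sin(n\phi)$, where $\chi_n$ solves the radial PML equation---a Bessel-type equation in the stretched variable $\kappa_0\tilde r$---subject to $\chi_n(\rho)=0$; thus $\chi_n$ is the combination of $H_n^{(1)}(\kappa_0\tilde r)$ and $H_n^{(2)}(\kappa_0\tilde r)$ vanishing at $r=\rho$, dominated by the decaying mode $H_n^{(1)}(\kappa_0\tilde r)$. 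Inserting the polar form of the gradient from \eqref{diver} and using the orthogonality of $\{\sin n\phi\}$ and $\{\cos n\phi\}$ on $(0,\pi)$ reduces $\||\alpha|^{-1}\gamma\nabla w\|_{L^2(\Omega^{\rm PML})}^2$ to a sum over $n$ of $|c_n|^2$ times the radial integrals $\int_R^\rho|\alpha|^{-2}\gamma^2\big(|\chi_n'/\chi_n(R)|^2+(n^2/r^2)|\chi_n/\chi_n(R)|^2\big)\,r\,dr$.

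The main obstacle is then the mode-by-mode radial estimate
\begin{equation*}
\int_R^\rho|\alpha|^{-2}\gamma(r)^2\Big(\Big|\frac{\chi_n'(r)}{\chi_n(R)}\Big|^2+\frac{n^2}{r^2}\Big|\frac{\chi_n(r)}{\chi_n(R)}\Big|^2\Big)\,r\,dr\le C\big(\hat C^{-1}(1+\kappa_0R)|\alpha_0|\big)^2(1+n^2)^{1/2},
\end{equation*}
which must hold uniformly in $n$ and in $r\in[R,\rho]$. This is where the precise form of $\gamma$ is used: it cancels the exponential decay of $|H_n^{(1)}(\kappa_0\tilde r)|/|H_n^{(1)}(\kappa_0 R)|$, the factor $(1-r^2/|\tilde r|^2)^{1/2}$ being exactly the Debye/WKB correction that renders the decay rate uniform across all modes up to $n\sim\kappa_0|\tilde r|$. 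I would establish it through monotonicity and the uniform asymptotics of Hankel functions of complex argument---the same ingredients behind the propagation estimate \eqref{TM-e3}---together with the energy bound \eqref{TM-e1}, which is the source of the constant $\hat C^{-1}(1+\kappa_0R)|\alpha_0|$, where $\hat C$ is the inf-sup constant in \eqref{TM-is2}. Summing over $n$ and recognizing $\sum_{n\ge1}(1+n^2)^{1/2}|c_n|^2=\|\varphi\|_{H^{1/2}(\Gamma_R^+)}^2$ then yields the claim. I expect this Hankel-ratio-versus-weight estimate, uniform in the mode number, to be the only genuinely hard step; the conjugation, separation of variables, and orthogonality are routine bookkeeping.
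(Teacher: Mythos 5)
Your first two steps (conjugating \eqref{TM-PML-P8} so that $w:=\overline{\tilde\varphi}$ solves the unconjugated PML equation with data $\bar\varphi$ on $\Gamma_R^+$, and the modal reduction) are sound, and you correctly identify that $\gamma$ is built to cancel the $H^{(1)}$-decay. But the route you then take diverges from the argument the paper invokes (that of \cite[Lemma 4.4]{CL05}), and the divergence hides a genuine gap. The paper's proof does \emph{not} estimate the full two-point boundary-value modes $\chi_n$ (the combinations of $H^{(1)}_n$ and $H^{(2)}_n$ vanishing at $\rho$). Instead it splits $w=\zeta+(w-\zeta)$, where
\begin{equation*}
\zeta(r,\phi)=\sum_{n\ge 1}c_n\,\frac{H_n^{(1)}(\kappa_0\tilde r)}{H_n^{(1)}(\kappa_0 R)}\,\sin(n\phi)
\end{equation*}
is the explicit purely outgoing PML extension of the inner data. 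The weighted mode-by-mode computation you envisage is then needed only for $\zeta$, where it reduces to the one-sided monotonicity bound behind \eqref{TM-e3}. The remainder $w-\zeta$ vanishes on $\Gamma_R^+$ and has outer trace $-\mathscr P_{\rm TM}(\bar\varphi)$ on $\Gamma_\rho^+$, i.e.\ it solves a problem of the form \eqref{TM-PML-P3}; applying \eqref{TM-e1} to it, using \eqref{TM-e3} for the smallness of $\mathscr P_{\rm TM}(\bar\varphi)$, and noting that $\gamma(r)=e^{\kappa_0 r\hat\sigma^2(1+\hat\sigma^2)^{-1/2}}$ is nondecreasing with $\gamma(\rho)\le e^{\kappa_0\Im(\tilde\rho)(1-R^2/|\tilde\rho|^2)^{1/2}}$, the exponential weight is exactly absorbed by the exponentially small outer data, yielding the bound $C\hat C^{-1}(1+\kappa_0R)|\alpha_0|\|\varphi\|_{H^{1/2}(\Gamma_R^+)}$ for the remainder with no further Hankel analysis.

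Your plan, by contrast, needs the per-mode weighted integral bound for the full $\chi_n/\chi_n(R)$, and the two ingredients you cite cannot deliver it. First, \eqref{TM-e1} cannot be ``the source of the constant'' inside a per-mode radial estimate: it is a variational (non-modal, unweighted) bound for the boundary configuration with zero trace on $\Gamma_R^+$ and data on $\Gamma_\rho^+$ --- the opposite of your normalization at $r=R$ --- and it only enters the proof after the splitting, applied to $w-\zeta$. Second, without the splitting you must control, uniformly in $n$, the $H^{(2)}$ admixture coefficient $H_n^{(1)}(\kappa_0\tilde\rho)/H_n^{(2)}(\kappa_0\tilde\rho)$ \emph{and} bound $|\chi_n(R)|$ from below. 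This requires two-sided (in particular growing-mode and lower) bounds on Hankel functions of complex argument, uniform in the order, which the monotonicity estimate behind \eqref{TM-e3} does not provide; worse, in the evanescent regime $n\gg\kappa_0|\tilde\rho|$ both $H_n^{(1)}$ and $H_n^{(2)}$ are dominated by the same $Y_n$-type growth, $H_n^{(1)}/H_n^{(2)}\to -1$, and $\chi_n(R)$ is governed by cancellations among subleading terms, so establishing your key display uniformly in $n$ is a delicate uniform-asymptotics problem, not ``routine bookkeeping.'' The missing idea is precisely the decomposition into the explicit outgoing part plus an inf-sup-controlled remainder; with it your sketch closes along the paper's lines, without it the central step of your proposal is unproven and your appeal to \eqref{TM-e1} does not apply.
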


The following lemma is needed in order to present the error
representation formula.

\begin{lemma}\label{TML:L2}
For any $\varphi\in H^1(\Omega)$, let $\tilde\varphi$ be its extension in
$H^1(\Omega_{\rho})$ according to \eqref{TM-PML-P8}. Then we have for any
$\xi\in H_{0}^1(\Omega_{\rho})$ that
\begin{equation*}
\int_{\Omega^{\rm PML}}\left(A\nabla\xi\cdot
\nabla\bar{\tilde{\varphi}}-\kappa_0^2\alpha\beta\xi\bar{\tilde{\varphi}}
\right){\rm d}x=-\langle\hat{\mathscr{B}}_{\rm
TM}\xi,\varphi\rangle_{\Gamma_R^+}.
\end{equation*}
\end{lemma}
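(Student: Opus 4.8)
The plan is to integrate by parts over $\Omega^{\rm PML}$ so as to transfer all derivatives from $\xi$ onto $\bar{\tilde{\varphi}}$, use the fact that $\bar{\tilde{\varphi}}$ itself solves the (unconjugated) PML equation so that the volume contributions cancel, and then identify the surviving boundary integral on $\Gamma_R^+$ with the DtN operator by invoking its symmetry from Lemma \ref{TML:L1}.

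First I would record the decisive observation: taking the complex conjugate of the extension problem \eqref{TM-PML-P8}, and using that $A$ is symmetric with $\overline{\bar A}=A$, shows that $\bar{\tilde{\varphi}}$ satisfies $\nabla\cdot(A\nabla\bar{\tilde{\varphi}})+\kappa_0^2\alpha\beta\bar{\tilde{\varphi}}=0$ in $\Omega^{\rm PML}$, with $\bar{\tilde{\varphi}}=\bar\varphi$ on $\Gamma_R^+$ and $\bar{\tilde{\varphi}}=0$ on $\Gamma_g\cup\Gamma_\rho^+$. This is exactly the boundary value problem defining $\hat{\mathscr{B}}_{\rm TM}$, so $\partial_r\bar{\tilde{\varphi}}|_{\Gamma_R^+}=\hat{\mathscr{B}}_{\rm TM}\bar\varphi$. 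The deliberate use of $\bar A$ and $\overline{\alpha\beta}$ in the extension problem is precisely what makes $\bar{\tilde{\varphi}}$ an admissible DtN extension, and this identification is the crux of the argument.

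Next I would apply Green's first identity. Writing $A\nabla\xi\cdot\nabla\bar{\tilde{\varphi}}=\nabla\xi\cdot(A\nabla\bar{\tilde{\varphi}})$ by symmetry of $A$ and integrating by parts gives
\begin{equation*}
\int_{\Omega^{\rm PML}}\!\big(A\nabla\xi\cdot\nabla\bar{\tilde{\varphi}}-\kappa_0^2\alpha\beta\xi\bar{\tilde{\varphi}}\big){\rm d}x
=-\int_{\Omega^{\rm PML}}\!\xi\big(\nabla\cdot(A\nabla\bar{\tilde{\varphi}})+\kappa_0^2\alpha\beta\bar{\tilde{\varphi}}\big){\rm d}x
+\int_{\partial\Omega^{\rm PML}}\!\xi(A\nabla\bar{\tilde{\varphi}})\cdot\nu\,{\rm d}s.
\end{equation*}
The volume integral vanishes because $\bar{\tilde{\varphi}}$ solves the PML equation. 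Since $\xi\in H_0^1(\Omega_\rho)$ vanishes on $\Gamma_g\cup\Gamma_\rho^+$, only the inner semicircle $\Gamma_R^+$ contributes; there the outward normal of $\Omega^{\rm PML}$ is $\nu=-\boldsymbol{e}_r$, and $\alpha(R)=\beta(R)=1$ forces $A=I$, so the boundary term collapses to $-\int_{\Gamma_R^+}\xi\,\partial_r\bar{\tilde{\varphi}}\,{\rm d}s=-\int_{\Gamma_R^+}\xi\,(\hat{\mathscr{B}}_{\rm TM}\bar\varphi)\,{\rm d}s$.

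Finally I would invoke Lemma \ref{TML:L1} with the boundary traces $\zeta:=\xi|_{\Gamma_R^+}$ and $\varphi$: since $\langle\hat{\mathscr{B}}_{\rm TM}\zeta,\varphi\rangle_{\Gamma_R^+}=\langle\hat{\mathscr{B}}_{\rm TM}\bar\varphi,\bar\zeta\rangle_{\Gamma_R^+}=\int_{\Gamma_R^+}\zeta\,(\hat{\mathscr{B}}_{\rm TM}\bar\varphi)\,{\rm d}s$, the surviving boundary integral equals $\langle\hat{\mathscr{B}}_{\rm TM}\xi,\varphi\rangle_{\Gamma_R^+}$, which yields the claimed identity with the correct sign. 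The main technical obstacle is making the integration by parts rigorous at the $H^1$ level: the normal flux $(A\nabla\bar{\tilde{\varphi}})\cdot\nu$ lies only in $H^{-1/2}(\Gamma_R^+)$, so the boundary integral must be interpreted as a duality pairing and the localization to $\Gamma_R^+$ (discarding $\Gamma_g$ and $\Gamma_\rho^+$) justified through the weak formulation rather than by pointwise restriction; once the conjugation structure of \eqref{TM-PML-P8} is exploited, everything else is routine.
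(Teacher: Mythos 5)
Your proof is correct and takes essentially the same route as the paper's: both rest on the conjugation structure of \eqref{TM-PML-P8} (so that $\partial_{\nu}\bar{\tilde{\varphi}}|_{\Gamma_R^+}=\hat{\mathscr{B}}_{\rm TM}\bar{\varphi}$), an integration by parts over $\Omega^{\rm PML}$ in which only the $\Gamma_R^+$ boundary term survives since $\xi$ vanishes on $\Gamma_g\cup\Gamma_\rho^+$ and $A=I$ on $\Gamma_R^+$, and Lemma \ref{TML:L1} to convert $\langle\hat{\mathscr{B}}_{\rm TM}\bar{\varphi},\bar{\xi}\rangle_{\Gamma_R^+}$ into $\langle\hat{\mathscr{B}}_{\rm TM}\xi,\varphi\rangle_{\Gamma_R^+}$. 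The only cosmetic difference is the order of operations: you conjugate the extension PDE first and then integrate by parts against $\xi$, whereas the paper tests the unconjugated equation against $\bar{\xi}$ and conjugates the resulting identity afterwards.
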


\begin{proof}
Multiplying the first equation of \eqref{TM-PML-P8} by $\xi\in
H_{0}^1(\Omega_{\rho})$, using the integration by parts, and noting $A=I$ on
$\Gamma_R^+$, we deduce
\begin{eqnarray*}
\int_{\Omega^{\rm PML}}(\bar{A}\nabla \tilde{\varphi}\cdot\nabla
\bar{\xi}-\kappa_0^2\overline{\alpha\beta}\tilde{\varphi}\bar{\xi}){\rm
d}x=\int_{\partial \Omega^{\rm PML}} (\bar{A}\nabla \tilde{\varphi})\cdot
\nu\bar{\xi}{\rm d}s
= -\int_{\Gamma_R^+}\partial_{\nu}\tilde{\varphi} \bar{\xi}{\rm d}s,
\end{eqnarray*}
where $\nu$ is the outward normal vector to $\Gamma_R^+$ pointing to the
outside of $\Omega$. Taking the complex conjugate on both sides of the above
equation yields
\begin{equation*}
\int_{\Omega^{\rm PML}}(A \nabla \xi  \cdot\nabla
\bar{\tilde{\varphi}}-\kappa_0^2\alpha\beta\xi\bar{\tilde{\varphi}}){\rm
d}x=-\int_{\Gamma_R^+}\partial_{\nu} \bar{\tilde{\varphi}} \xi{\rm d}s.
\end{equation*}
It follows from the definition of $\hat{\mathscr{B}}_{\rm TM}:H^{1/2}(\Gamma_R^+)\rightarrow H^{-1/2}(\Gamma_R^+)$ that
\begin{equation*}
  \partial_{\nu}\bar{\tilde{\varphi}} |_{\Gamma_R^+}=\hat{\mathscr{B}}_{\rm TM}\bar{\varphi}.
\end{equation*}
Combining the above two equations leads to
\begin{equation*}
\int_{\Omega^{\rm PML}}(A \nabla \xi  \cdot\nabla
\bar{\tilde{\varphi}}-\kappa_0^2\alpha\beta\xi\bar{\tilde{\varphi}}){\rm
d}x=-\langle\hat{\mathscr{B}}_{\rm TM}\bar{\varphi},\bar{\xi}\rangle
_{\Gamma_R^+}.
\end{equation*}
By Lemma \ref{TML:L1}, we have
\begin{equation*}
\int_{\Omega^{\rm PML}}(A \nabla \xi  \cdot\nabla
\bar{\tilde{\varphi}}-\alpha\beta\kappa^2\xi\bar{\tilde{\varphi}}){\rm
d}x=-\langle\hat{\mathscr{B}}_{\rm TM}\xi,\varphi\rangle _{\Gamma_R^+},
\end{equation*}
which completes the proof.
\end{proof}

The following lemma gives the error representation formula.

\begin{lemma}[error representation formula]\label{TML:L3}
For any $\varphi\in H^1(\Omega)$, let $\tilde\varphi$ be its extension in
$H^1(\Omega_{\rho})$ according to \eqref{TM-PML-P8}. For any
$\varphi_h\in V_{S, h}$, the following identity holds:
\begin{eqnarray*}
a_{\rm TM}(u-u_h,\varphi)&=&\langle \mathscr{B}_{\rm TM}(u_h-u^{\rm
ref}) -\hat{\mathscr{B}}_{\rm TM}(u_h-u^{\rm
ref}),\varphi\rangle_{\Gamma_R^+}-b(u_h,\varphi-\varphi_h)\\\label{TM-eq2}
 &&-\int_{\Omega^{\rm PML}}\big(\nabla\cdot(A \nabla u^{\rm
ref})+\kappa_0^2\alpha\beta
u^{\rm ref}\big)(\bar{\tilde{\varphi}}-\bar{\varphi}_h){\rm d}x.
\end{eqnarray*}
\end{lemma}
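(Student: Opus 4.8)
The plan is to start from the definition of the error on the left and rewrite it repeatedly until the three claimed terms appear, invoking in turn the variational identity \eqref{TM-VF} for $u$, the duality relation of Lemma \ref{TML:L2}, one integration by parts in $\Omega^{\rm PML}$, and at the very end the discrete orthogonality \eqref{TM-PML-P7}. First I would use \eqref{TM-VF} to write $a_{\rm TM}(u-u_h,\varphi)=\langle f,\varphi\rangle_{\Gamma_R^+}-a_{\rm TM}(u_h,\varphi)$ and then trade $\mathscr{B}_{\rm TM}$ for $\hat{\mathscr{B}}_{\rm TM}$. Because $f-\hat f=(\hat{\mathscr{B}}_{\rm TM}-\mathscr{B}_{\rm TM})u^{\rm ref}$ and $a_{\rm TM}(u_h,\varphi)-\hat a_{\rm TM}(u_h,\varphi)=-\langle(\mathscr{B}_{\rm TM}-\hat{\mathscr{B}}_{\rm TM})u_h,\varphi\rangle_{\Gamma_R^+}$, these two discrepancies merge into the single pairing $\langle(\mathscr{B}_{\rm TM}-\hat{\mathscr{B}}_{\rm TM})(u_h-u^{\rm ref}),\varphi\rangle_{\Gamma_R^+}$, which is exactly the first term of the claim. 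This leaves the reduced identity
\begin{equation*}
\langle\hat f,\varphi\rangle_{\Gamma_R^+}-\hat a_{\rm TM}(u_h,\varphi)=-b(u_h,\tilde\varphi-\varphi_h)-\int_{\Omega^{\rm PML}}F(\bar{\tilde{\varphi}}-\bar{\varphi}_h)\,{\rm d}x,
\end{equation*}
where $\varphi$ inside $b$ is read as its extension $\tilde\varphi$ from \eqref{TM-PML-P8}.

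To prove this reduced identity I would expand $\hat a_{\rm TM}(u_h,\varphi)$ and $\langle\hat f,\varphi\rangle_{\Gamma_R^+}$ by their definitions, which leaves the boundary term $\langle\partial_r u^{\rm ref},\varphi\rangle_{\Gamma_R^+}$, the pairing $\langle\hat{\mathscr{B}}_{\rm TM}(u_h-u^{\rm ref}),\varphi\rangle_{\Gamma_R^+}$, and the volume integral of $u_h$ over $\Omega$. The central step is to apply Lemma \ref{TML:L2} with $\xi=u_h-u^{\rm ref}$ to turn that pairing into a volume integral over $\Omega^{\rm PML}$; this is admissible because $u_h=u^{\rm ref}$ on $\Gamma_\rho^+$ and $u_h=u^{\rm ref}=0$ on the ground part of $\partial\Omega^{\rm PML}$, which are the only boundary conditions the proof of that lemma actually uses. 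Since $A=I$ and $\alpha=\beta=1$ on $\Omega$, where $\tilde\varphi=\varphi$, the $u_h$-pieces of the resulting PML integral recombine with the $\Omega$-integral into the $\Omega_\rho$-integral defining $b(u_h,\tilde\varphi)$, and the only surviving remainder is the reference-field integral $\int_{\Omega^{\rm PML}}(A\nabla u^{\rm ref}\cdot\nabla\bar{\tilde{\varphi}}-\kappa_0^2\alpha\beta u^{\rm ref}\bar{\tilde{\varphi}})\,{\rm d}x$.

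I would then integrate this remainder by parts. Its interior part is precisely $-\int_{\Omega^{\rm PML}}F\bar{\tilde{\varphi}}\,{\rm d}x$ by the definition of $F$, and the boundary integral collapses to $\Gamma_R^+$ since $\tilde\varphi$ vanishes on $\Gamma_\rho^+$ and on the ground; there $A=I$ and the outward normal of $\Omega^{\rm PML}$ is $-\boldsymbol{e}_r$, so the boundary contribution equals $-\langle\partial_r u^{\rm ref},\varphi\rangle_{\Gamma_R^+}$ and cancels the surviving boundary term. The left side then reads $-b(u_h,\tilde\varphi)-\int_{\Omega^{\rm PML}}F\bar{\tilde{\varphi}}\,{\rm d}x$. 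Finally, for any $\varphi_h\in V_{S,h}$ the discrete problem \eqref{TM-PML-P7} gives $b(u_h,\varphi_h)+\int_{\Omega^{\rm PML}}F\bar{\varphi}_h\,{\rm d}x=0$ (as $F$ is supported in $\Omega^{\rm PML}$), so adding this vanishing quantity inserts the $\varphi_h$ terms and yields exactly the right-hand side of the reduced identity.

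The algebraic splittings and the reassembly of the $u_h$ volume terms into $b(u_h,\tilde\varphi)$ are routine. The step I expect to be the main obstacle, and would write out with care, is the integration by parts in $\Omega^{\rm PML}$: one must get the orientation of the normal on $\Gamma_R^+$ right (it points into $\Omega$, producing $-\partial_r$) and confirm via the homogeneous data in \eqref{TM-PML-P8} that the contributions on $\Gamma_\rho^+$ and on the ground vanish, so that the extra $\langle\partial_r u^{\rm ref},\varphi\rangle_{\Gamma_R^+}$ cancels exactly. A lesser subtlety is justifying Lemma \ref{TML:L2} for $\xi=u_h-u^{\rm ref}$, whose restriction to $\overline{\Omega^{\rm PML}}$ meets the required boundary conditions even though $u_h-u^{\rm ref}$ itself need not vanish on the cavity wall $S$.
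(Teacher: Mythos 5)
Your proof is correct and is essentially the paper's own argument with the steps reordered: both rest on exactly the same four ingredients --- the variational identity \eqref{TM-VF}, Lemma \ref{TML:L2} applied with $\xi=u_h-u^{\rm ref}$, the integration by parts in $\Omega^{\rm PML}$ that produces the $F$-integral and cancels the $\partial_r u^{\rm ref}$ boundary term on $\Gamma_R^+$, and the discrete equation \eqref{TM-PML-P7} --- the only difference being that you extract the DtN-discrepancy term $\langle(\mathscr{B}_{\rm TM}-\hat{\mathscr{B}}_{\rm TM})(u_h-u^{\rm ref}),\varphi\rangle_{\Gamma_R^+}$ up front via $\hat f$ and $\hat a_{\rm TM}$, whereas the paper inserts $\pm b(u_h,\varphi-\varphi_h)$ at the outset and invokes Lemma \ref{TML:L2} last. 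The two subtleties you flag are handled correctly and are in fact points the paper passes over in silence: the outward normal of $\Omega^{\rm PML}$ on $\Gamma_R^+$ is $-\boldsymbol{e}_r$ with $A=I$ there, and $\xi=u_h-u^{\rm ref}$ is admissible in Lemma \ref{TML:L2} because it vanishes on $\Gamma_\rho^+$ and on the ground portion of $\partial\Omega^{\rm PML}$, even though it does not vanish on the cavity wall $S$.
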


\begin{proof}
It follows from \eqref{TM-VF} that
\begin{eqnarray}
\nonumber a_{\rm TM}(u-u_h,\varphi)&=&a_{\rm TM}(u,\varphi)-a_{\rm
TM}(u_h,\varphi)  \\\label{TM-eq9}
  &=&\langle f, \varphi\rangle_{\Gamma_R^+}-b(u_h,\varphi-\varphi_h)+b(u_h,\varphi)-b(u_h,\varphi_h)-a_{\rm TM}(u_h,\varphi).
\end{eqnarray}
Using \eqref{TE-PML-P7} and the integration by parts, we obtain
\begin{eqnarray}
\nonumber b(u_h,\varphi_h) &=&- \int_{\Omega^{\rm PML}}F\bar{\varphi}_h{\rm
d}x\\
 \nonumber  &=&-\int_{\Omega^{\rm PML}}\big(  \nabla\cdot(A \nabla u^{\rm
ref})+\kappa_0^2\alpha\beta u^{\rm ref}\big)\bar{\varphi}_h{\rm d}x\\
\nonumber  &=&\int_{\Omega^{\rm PML}}\big(\nabla\cdot(A \nabla u^{\rm
ref})+\kappa_0^2\alpha\beta
u^{\rm ref}\big)(\bar{\tilde{\varphi}}-\bar{\varphi}_h){\rm
d}x+\int_{\Omega^{\rm PML}}\big( A \nabla u^{\rm ref}\cdot\nabla
\bar{\tilde{\varphi}} +\kappa_0^2\alpha\beta u^{\rm
ref}\bar{\tilde{\varphi}}\big){\rm d}x\\\label{TM-eq5}
&&+\int_{\Gamma_R^+}\partial_{\nu}u^{\rm ref} \bar{\varphi}{\rm d}s.
\end{eqnarray}
By the definition of the sesquilinear form \eqref{TM-PML-SQL}, we have
\begin{eqnarray}
 b(u_h,\varphi) 
   &=& \int_{\Omega}\left(A \nabla u_h  \cdot\nabla \bar{\varphi}-\kappa^2\alpha\beta u_h\bar{\varphi}\right){\rm d}x+\int_{\Omega^{\rm PML}}\left(A \nabla u_h  \cdot\nabla \bar{\tilde{\varphi}}-\kappa_0^2\alpha\beta u_h\bar{\tilde{\varphi}}\right){\rm d}x.\label{TM-eq6}
\end{eqnarray}
It is easy to get from \eqref{TM-SQL} that
\begin{equation}\label{TM-eq7}
a_{\rm TM}(u_h,\varphi)= \int_{\Omega}\left(A \nabla u_h  \cdot\nabla \bar{\varphi}-\kappa^2\alpha\beta u_h\bar{\varphi}\right){\rm d}x- \langle \mathscr{B}_{\rm TM}u_h, \varphi\rangle_{\Gamma_R^+}.
\end{equation}
Using \eqref{TM-eq5}--\eqref{TM-eq7} yields
\begin{eqnarray*}
   &&b(u_h,\varphi)-b(u_h,\varphi_h)-a_{\rm TM}(u_h,\varphi)
   =-\int_{\Omega^{\rm PML}}\big(  \nabla\cdot(A \nabla u^{\rm
ref})+\kappa_0^2\alpha\beta u^{\rm
ref}\big)(\bar{\tilde{\varphi}}-\bar{\varphi}_h){\rm d}x\\
   &&+\int_{\Omega^{\rm PML}}\big(A \nabla (u_h-u^{\rm ref})\cdot\nabla
\bar{\tilde{\varphi}}-\alpha\beta\kappa^2(u_h-u^{\rm
ref})\bar{\tilde{\varphi}}\big){\rm d}x
 -\int_{\Gamma_R^+}\partial_{\nu}u^{\rm ref} \bar{\varphi}{\rm d}s+ \langle
\mathscr{B}_{\rm TM}u_h, \varphi\rangle_{\Gamma_R^+},
\end{eqnarray*}
which together with Lemma \ref{TML:L2} implies
\begin{eqnarray}
 \nonumber  &&b(u_h,\varphi)-b(u_h,\varphi_h)-a_{\rm TM}(u_h,\varphi)  \\
\nonumber   &=&-\int_{\Omega^{\rm PML}}\big(  \nabla\cdot(A \nabla u^{\rm
ref})+\kappa_0^2\alpha\beta
u^{\rm ref}\big)(\bar{\tilde{\varphi}}-\bar{\varphi}_h){\rm d}x\\
 &&+\langle \mathscr{B}_{\rm TM}u_h-\hat{\mathscr{B}}_{\rm TM}u_h,
\varphi\rangle_{\Gamma_R^+} +\langle-\partial_{\nu}u^{\rm
ref}+\hat{\mathscr{B}}_{\rm TM}u^{\rm
ref},\varphi\rangle_{\Gamma_R^+}.\label{TM-eq8}
\end{eqnarray}
Substituting \eqref{TM-eq8} into \eqref{TM-eq9}, we have
\begin{eqnarray*}
a_{\rm TM}(u-u_h,\varphi) &=&\langle \partial_{\nu}u^{\rm ref}-\mathscr{B}_{\rm
TM}u^{\rm ref},\varphi\rangle_{\Gamma_R^+}-b(u_h,\varphi-\varphi_h)\\
&&-\int_{\Omega^{\rm PML}}\big(  \nabla\cdot(A \nabla u^{\rm
ref})+\kappa_0^2\alpha\beta u^{\rm
ref}\big)(\bar{\tilde{\varphi}}-\bar{\varphi}_h){\rm d}x\\
&&+\langle \mathscr{B}_{\rm TM}u_h-\hat{\mathscr{B}}_{\rm TM}u_h,
\varphi\rangle_{\Gamma_R^+}+\langle-\partial_{\nu} u^{\rm
ref}+\hat{\mathscr{B}}_{\rm TM}u^{\rm ref},\varphi\rangle_{\Gamma_R^+}\\
&=&\langle \mathscr{B}_{\rm TM}(u_h-u^{\rm ref}) -\hat{\mathscr{B}}_{\rm
TM}(u_h-u^{\rm ref}),\varphi\rangle_{\Gamma_R^+}-b(u_h,\varphi-\varphi_h)\\
&&-\int_{\Omega^{\rm PML}}\big(\nabla\cdot(A \nabla
u^{\rm ref})+\kappa_0^2\alpha\beta u^{\rm
ref}\big)(\bar{\tilde{\varphi}}-\bar{\varphi}_h){\rm d}x,
\end{eqnarray*}
which completes the proof.
\end{proof}

Let $\Pi_h: H^1_S(\Omega_{\rho})\to V_{S, h}$ be the Clement-type
interpolation operator. It can be verified that the operator enjoys the
following estimates: for any $v \in H^1_S(\Omega_{\rho})$,
\begin{equation*}
\|v-\Pi_h v\|_{L^2(K)}\leq C h_K\|\nabla v\|_{L^2(\tilde{K})},\quad \|v-\Pi_h
v\|_{L^2(e)}\leq C h_e^{1/2}\|\nabla v\|_{L^2(\tilde{e})},
\end{equation*}
where $\tilde{K}$ and $\tilde{e}$ are the union of all elements in
$\mathcal{M}_h$ having nonempty intersection with $K\in \mathcal{M}_h$ and the
side $e$, respectively.

The following theorem presents the a posteriori error estimate and is the main result for the TM
polarization.

\begin{theorem}\label{T:T1}
Let $u$ and $u_h$ be the solutions of \eqref{TM-VF} and \eqref{TM-PML-P7},
respectively. There exists a constant $C$ depending only on the minimum angle of
the mesh $\mathcal{M}_h$ such that the following a posterior error estimate
holds:
\begin{eqnarray*}
  \|u-u_h\|_{H^1(\Omega)}&\leq & C\hat{C}^{-1}(1+\kappa
R)\bigg(\sum_{K\in\mathcal{M}_h}\eta_K^2\bigg)^{1/2}\\
& & + C\hat{C}^{-1}(1+\kappa_0R)^2|\alpha_0|^2e^{-\kappa_0\Im{(\tilde{\rho})}
(1-\frac{R^2}{|\tilde{\rho}|^2})^{1/2}}\|u_h-u^{\rm ref}\|_{H^{1/2}_{\rm
TM}(\Gamma_R^+)} .
\end{eqnarray*}
\end{theorem}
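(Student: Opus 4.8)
The plan is to combine the inf-sup stability of $a_{\rm TM}$ with the error representation formula of Lemma~\ref{TML:L3}, and then estimate the three resulting terms separately. First I would invoke the inf-sup condition \eqref{TM-is1} applied to $u-u_h\in H^1_S(\Omega)$, which reduces matters to bounding the sesquilinear form: it suffices to show that for every $\varphi\in H^1_S(\Omega)$ the quantity $|a_{\rm TM}(u-u_h,\varphi)|$ is controlled by the right-hand side of the theorem times $\|\varphi\|_{H^1(\Omega)}$. Given such a $\varphi$, I would extend it to $\tilde\varphi\in H^1(\Omega_\rho)$ through \eqref{TM-PML-P8} and take $\varphi_h=\Pi_h\tilde\varphi\in V_{S,h}$, the Clement interpolant of the extension. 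Lemma~\ref{TML:L3} then writes $a_{\rm TM}(u-u_h,\varphi)$ as the sum of (I) the boundary pairing $\langle(\mathscr{B}_{\rm TM}-\hat{\mathscr{B}}_{\rm TM})(u_h-u^{\rm ref}),\varphi\rangle_{\Gamma_R^+}$, (II) the term $-b(u_h,\varphi-\varphi_h)$, and (III) the $u^{\rm ref}$ volume integral over $\Omega^{\rm PML}$.

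For the PML truncation term (I), I would pair by duality and apply Lemma~\ref{L:TBCC} together with the trace inequality $\|\varphi\|_{H^{1/2}_{\rm TM}(\Gamma_R^+)}\le C\|\varphi\|_{H^1(\Omega)}$. This directly produces the exponentially small second contribution of the theorem, carrying the factor $(1+\kappa_0R)^2|\alpha_0|^2 e^{-\kappa_0\Im(\tilde\rho)(1-R^2/|\tilde\rho|^2)^{1/2}}\|u_h-u^{\rm ref}\|_{H^{1/2}_{\rm TM}(\Gamma_R^+)}$.

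For the finite element terms (II) and (III), the point is that they combine into the residual-based estimator after elementwise integration by parts of $b(u_h,\cdot)$. On $\Omega$, where $A=I$ and $\alpha=\beta=1$, the elementwise volume residual is exactly $R_K(u_h)=\tilde R_K$. On $\Omega^{\rm PML}$, where $\kappa=\kappa_0$, the integrand of (III) is $\nabla\cdot(A\nabla u^{\rm ref})+\kappa_0^2\alpha\beta u^{\rm ref}=R_K(u^{\rm ref})$, and subtracting it from $R_K(u_h)$ yields $R_K(u_h-u^{\rm ref})=\tilde R_K$, matching the definition of $\tilde R_K$ on the PML elements. The inter-element boundary contributions assemble into the jump residuals $J_e$, so that (II)$+$(III) equals $\sum_K\int_K\tilde R_K\,\overline{(\tilde\varphi-\varphi_h)}\,{\rm d}x+\sum_{e\in\mathcal B_h}\int_e J_e\,\overline{(\tilde\varphi-\varphi_h)}\,{\rm d}s$. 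By Cauchy-Schwarz and the Clement estimates, on $\Omega$ (where $w\equiv1$) this is bounded directly by $C(\sum_K\eta_K^2)^{1/2}\|\varphi\|_{H^1(\Omega)}$.

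The main obstacle is the PML-region contribution to (II)$+$(III), where $\nabla\tilde\varphi$ may grow exponentially in $r$ and must be tamed by the rescaling weight $w$. There I would write $\nabla\tilde\varphi=(|\alpha|\gamma^{-1})(|\alpha|^{-1}\gamma\nabla\tilde\varphi)$, absorb the factor $|\alpha|\gamma^{-1}=|\alpha_0|w$ into the maximum defining $\eta_K$ (using mesh regularity and the slow variation of $w$ to replace $\max_{\tilde K}w$ by $\max_K w$ up to a constant), and control the remaining weighted gradient $\||\alpha|^{-1}\gamma\nabla\tilde\varphi\|_{L^2(\Omega^{\rm PML})}$ through Lemma~\ref{TML:L4}. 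Collecting the $\Omega$ and $\Omega^{\rm PML}$ estimates yields the estimator term $C\hat C^{-1}(1+\kappa R)(\sum_K\eta_K^2)^{1/2}$, and adding term (I) completes the proof. The delicate bookkeeping is exactly this exponential balancing: verifying that the decaying weight $w$ built into $\eta_K$ cancels the growth factor $\gamma$ appearing in the a priori bound of Lemma~\ref{TML:L4}, so that no large exponential survives in the finite element part of the estimate.
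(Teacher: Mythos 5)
Your proposal is correct and follows essentially the same route as the paper: the error representation formula of Lemma~\ref{TML:L3} with the Clement interpolant, Lemma~\ref{L:TBCC} for the boundary term, elementwise integration by parts combined with the interpolation estimates and the weighted bound of Lemma~\ref{TML:L4} for the residual terms, and the inf-sup condition \eqref{TM-is1} to conclude. Your explicit accounting of the cancellation $|\alpha|\gamma^{-1}=|\alpha_0|w$ (and your correct placement of $R_K(u_h)$ on $\Omega$ versus $R_K(u_h-u^{\rm ref})$ on $\Omega^{\rm PML}$, which the paper's displayed formula accidentally swaps) merely spells out details the paper leaves implicit.
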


\begin{proof}
Taking $\varphi_h=\Pi_h \varphi$ and using Lemma \ref{TML:L3}, we have
\begin{eqnarray*}
  a_{\rm TM}(u-u_h,\varphi)&=&\langle \mathscr{B}_{\rm TM}(u_h-u^{\rm
ref}) -\hat{\mathscr{B}}_{\rm TM}(u_h-u^{\rm
ref}),\varphi\rangle_{\Gamma_R^+}-b(u_h,\varphi-\Pi_h \varphi)\\\label{TM-eq3}
 & &-\int_{\Omega^{\rm PML}}\big(  \nabla\cdot(A \nabla u^{\rm
ref})+\kappa_0^2\alpha\beta u^{\rm ref}\big)(\bar{\tilde{\varphi}}-\Pi_h
\bar{\varphi}){\rm d}x\\
   &:= &{\rm I}_1+{\rm I}_2+{\rm I}_3.
\end{eqnarray*}
It follows from Lemma \ref{L:TBCC} that
\begin{eqnarray*}
   {\rm I}_1 &=& \langle \mathscr{B}_{\rm TM}(u_h-u^{\rm ref})
-\hat{\mathscr{B}}_{\rm
TM}(u_h-u^{\rm ref}),\varphi\rangle_{\Gamma_R^+} \\
&\leq  & C\hat{C}^{-1}(1+\kappa_0R)^2|\alpha_0|^2e^{-\kappa_0\Im{(\tilde{\rho})}
\left(1-\frac{R^2}{|\tilde{\rho}|^2}\right)^{1/2}}\|u_h-u^{\rm
ref}\|_{H^{1/2}_{\rm TM}(\Gamma_R^+)}\|\varphi\|_{H^{1/2}_{\rm TM}(\Gamma_R^+)}.
\end{eqnarray*}
Using the integration by parts yields
\begin{eqnarray*}
  {\rm I}_2+{\rm I}_3 &=&\sum_{K\in\mathcal{M}_h\cap \Omega}\bigg(\int_K
R_K(u_h-u^{\rm
ref})(\bar{\varphi}-\Pi_h \bar{\varphi}){\rm d}x+\sum_{e\in\partial K\cap
\mathcal{B}_h}\frac{1}{2}\int_e J_e (\bar{\varphi}-\Pi_h \bar{\varphi}){\rm
d}s\bigg)\\
  & & +  \sum_{K\in\mathcal{M}_h\cap \Omega^{\rm PML}}\bigg(\int_K
R_K(u_h)(\bar{\varphi}-\Pi_h \bar{\varphi}){\rm d}x+\sum_{e\in\partial K\cap
\mathcal{B}_h}\frac{1}{2}\int_e J_e (\bar{\varphi}-\Pi_h \bar{\varphi}){\rm
d}s\bigg).
\end{eqnarray*}
It follows from the Cauchy--Schwarz inequality, the interpolation estimates and
lemma \ref{TML:L4} that
\begin{eqnarray*}
  |{\rm I}_2+{\rm I}_3|&\leq & C\sum_{K\in\mathcal{M}_h}\bigg(\|h_K
\tilde{R}_K\|^2_{L^2(K)}+\frac{1}{2}\sum_{e\in\partial K\cap
\mathcal{B}_h}\|h_e^{1/2}J_e\|^2_{L^2(e)} \bigg)^{1/2}\|\nabla
\varphi\|_{L^2(\tilde{K})}\\
&\leq & C \sum_{K\in\mathcal{M}_h} \eta_K\|w^{-1}\nabla
\varphi\|_{L^2(\tilde{K})}\\
&\leq & C\hat{C}^{-1}(1+\kappa
R)\bigg(\sum_{K\in\mathcal{M}_h}\eta_K^2\bigg)^{1/2} \|\varphi\|_{H^{1/2}
(\Gamma_R^+)}.
\end{eqnarray*}
Using the inf-sup condition \eqref{TM-is1} and combining the above estimates, we get
\begin{eqnarray*}
  \|u-u_h\|_{H^1(\Omega)} &\leq &  C\sup_{0\neq\varphi\in
H^1_S(\Omega)}\frac{|a_{\rm TM}(u-u_h,\varphi)}{\|\varphi\|_{H^1(\Omega)}} \\
   &\leq & C\hat{C}^{-1}(1+\kappa
R)\bigg(\sum_{K\in\mathcal{M}_h}\eta_K^2\bigg)^{1/2}\\
& & +
C\hat{C}^{-1}(1+\kappa_0R)^2|\alpha_0|^2e^{-\kappa_0\Im{(\tilde{\rho})}
\left(1-\frac{R^2}{|\tilde{\rho}|^2}\right)^{1/2}}\|u_h-u^{\rm
ref}\|_{H^{1/2}_{\rm TM}(\Gamma_R^+)},
\end{eqnarray*}
which completes the proof.
\end{proof}

\subsection{Adaptive FEM algorithm}

It can be seen from the Theorem \ref{T:T1} that the a posteriori error estimate
consists of two parts: the finite element approximation error $\varepsilon_h$
and the truncation error of the PML method $\varepsilon_{\rm PML}$, where
\begin{equation*}
 \varepsilon_h= \bigg(\sum_{K\in\mathcal{M}_h}\eta_K^2\bigg)^{1/2},\quad
\varepsilon_{\rm
PML}=e^{-\kappa_0\Im{(\tilde{\rho})}(1-\frac{R^2}{|\tilde{\rho}|^2} )^ {
1/2}}\|u_h-u^{\rm ref}\|_{H^{1/2}_{\rm TM}(\Gamma_R^+)} .
\end{equation*}
In the implementation, we may first choose $\sigma_0$ and $\rho$ to make sure
that the PML error $\varepsilon_{\rm PML}$ is small enough, for
instance $\varepsilon_{\rm PML}\leq 10^{-8}$, such that the PML error is
negligible compared with the finite element approximation error. Next we design
the adaptive strategy to modify the mesh according to the estimate
$\varepsilon_h$. Table \ref{Tab1} shows the algorithm of the adaptive finite
element PML method for solving the open cavity scattering problem in the TM
polarization.

\begin{table}[ht]
\caption{The adaptive finite element PML method for TM polarization.}\label{Tab1}
\begin{tabular}{cp{.8\textwidth}}
\toprule
(1) & Given the tolerance $\varepsilon>0$ and the parameter $\tau\in(0, 1)$.\\
(2) & Choose $\sigma_0$ and $\rho$ such that $\varepsilon_{\rm PML}\leq
10^{-8}$.\\
(3) & Construct an initial triangulation $\mathcal{M}_h$ over $\Omega_{\rho}$
 and compute error estimators.\\
(4) & While $\varepsilon_h>\varepsilon$ do\\
(5) &\quad\quad\quad refine $\mathcal{M}_h$ according to the strategy\\
(6) &\quad\quad\quad \quad\quad if $\eta_{\hat{K}}>\tau \max\limits_{K\in
\mathcal{M}_h}\eta_K$, refine the element $\hat{K}\in \mathcal{M}_h$;\\
(7) &\quad\quad\quad obtain a new mesh denoted still by $\mathcal{M}_h$;\\
(8) &\quad\quad\quad solve \eqref{TM-PML-P7} on the new mesh $\mathcal{M}_h$ and
compute the error estimators.\\
(9) & End while.\\
\bottomrule
\end{tabular}
\end{table}

\section{TE polarization}

In this section, we consider the TE polarization. Since the discussions are
similar to the TM polarization, we briefly present the parallel results without
providing the details.

\subsection{Variational problem}

It can be verified from \eqref{TE-P1} that the scattered field $u^{\rm s}$
satisfies the Helmholtz equation
\begin{equation}\label{TE-us}
\Delta u^{\rm s}+\kappa_0^2u^{\rm s}=0\quad \mathrm{in}~\mathbb{R}_+^2\setminus
\overline{B_R^+}.
\end{equation}
By the radiation condition \eqref{TM-SRC}, the solution of \eqref{TE-us} has
the Fourier series expansion
\begin{equation}\label{TE-usF1}
u^{\rm s}(r,\phi)=\sum_{n=0}^{\infty}\frac{H_n^{(1)}(\kappa_0 r)}{H_n^{(1)}(\kappa_0 R)}(a_n\sin (n\phi)+b_n\cos (n\phi)),\quad r\geq R.
\end{equation}
Using the fact $\partial_{\nu}u=0$ and $\partial_{\nu}u^{\rm ref}=0$ on
$\Gamma_g$, we have $\partial_{\phi}u^{\rm s}(r,0)=\partial_{\phi}u^{\rm
s}(r,\pi)=0$. Hence $a_n=0$ and \eqref{TE-usF1} reduces to
\begin{equation}\label{TE-usF}
u^{\rm s}(r,\phi)=\sum_{n=0}^{\infty}\frac{H_n^{(1)}(\kappa_0
r)}{H_n^{(1)}(\kappa_0 R)}b_n\cos (n\phi),\quad r\geq R,
\end{equation}
which gives
\begin{equation*}
\partial_r u^{\rm s}(R,\phi)=\kappa_0
\sum_{n=0}^{\infty}\frac{{H_n^{(1)'}}(\kappa_0 R)}{H_n^{(1)}(\kappa_0
R)}b_n\cos(n\phi).
\end{equation*}

Let $L^2_{\rm TE}(\Gamma_R^+):=\{u\in L^2(\Gamma_R^+):\partial_{\phi}u(R,0)=\partial_{\phi}u(R,\pi)=0\}$. For any $u\in L^2_{\rm TE}(\Gamma_R^+)$, it has the Fourier series expansion
\begin{equation*}
  u(R,\phi)=\sum_{n=0}^{\infty}b_n\cos (n\phi),
\end{equation*}
where
\begin{equation*}
   b_0=\frac{1}{\pi}\int_0^{\pi}u(R,\phi){\rm d}\phi,\quad b_n=\frac{2}{\pi}\int_0^{\pi}u(R,\phi)\cos (n \phi){\rm d}\phi.
\end{equation*}
Define the trace function space $H^s_{\rm TE}(\Gamma_R^+):=\{u\in L^2_{\rm TE}(\Gamma_R^+):\|u\|_{H^s_{\rm TE}(\Gamma_R^+)}\leq \infty\}$, where the $H^s_{\rm TE}(\Gamma_R^+)$ norm is given by
\begin{equation*}
  \|u\|_{H^s_{\rm TE}(\Gamma_R^+)}=\left(\sum_{n=0}^{\infty}(1+n^2)^s|b_n|^2\right)^{1/2}.
\end{equation*}
It is clear that the dual space of $H^s_{\rm TE}(\Gamma_R^+)$ is $H^{-s}_{\rm TE}(\Gamma_R^+)$ with respect to the scalar product in $L^2(\Gamma_R^+)$ given by
\begin{equation*}
  \langle u,v\rangle_{\Gamma_R^+}=\int_{\Gamma_R^+}u\bar{v}{\rm d}s.
\end{equation*}

We introduce a DtN operator on $\Gamma_R^+$:
\begin{equation}\label{TE-DtN}
  (\mathscr{B}_{\rm
TE}u)(R,\phi)=\kappa_0\sum_{n=0}^{\infty}\frac{{H_n^{(1)'}}(\kappa_0
R)}{H_n^{(1)}(\kappa_0 R)}b_n\cos (n\phi).
\end{equation}
It is shown \cite[Lemma 3.1]{W06} that the DtN operator $\mathscr{B}_{\rm
TE}:H^{1/2}_{\rm TE}(\Gamma_R^+)\rightarrow H^{-1/2}_{\rm TE}(\Gamma_R^+)$ is
continuous.
Using the boundary operator \eqref{TE-DtN}, we obtain the transparent boundary
condition for the TE polarization:
\begin{equation*}
  \partial_r u^{\rm s}=\mathscr{B}_{\rm TE}u^{\rm s}\quad\text{on}~ \Gamma_R^+,
\end{equation*}
which can be equivalently written for the total field $u$:
\begin{equation*}
\partial_r u=\mathscr{B}_{\rm TE}u+g\quad {\rm on}~\Gamma_R^+,
\end{equation*}
where $g=\partial_r u^{\rm ref}-\mathscr{B}_{\rm TE}u^{\rm ref}$.

In the TE polarization, the open cavity scattering problem can be reduced to the following boundary value problem:
\begin{equation*}
\left\{
\begin{array}{ll}
 \nabla\cdot(\kappa^{-2}\nabla u)+u=0&\qquad\mathrm{in}~\Omega,\\
  \partial_{\nu}u=0&\qquad\mathrm{on}~\Gamma_g\cup S,\\
  \partial_r u=\mathscr{B}_{\rm TE}u+g&\qquad {\rm on}~\Gamma_R^+,
  \end{array}\right.
\end{equation*}
which has the variational formulation: find $u\in H^1(\Omega)$ such that
\begin{equation}\label{TE-VF}
a_{\rm TE}(u,v)=\langle \kappa_0^{-2}g,v\rangle_{\Gamma_R^+}\quad \forall\, v\in
H^1(\Omega).
\end{equation}
Here the sesquilinear form $ a_{\rm TE}(\cdot,\cdot): H^1(\Omega)\times
H^1(\Omega)\to\mathbb C$ is given by
\begin{equation*}
a_{\rm TE}(u,v)=\int_{\Omega}\left(\kappa^{-2}\nabla u\cdot \nabla
\bar{v}-u\bar{v}\right){\rm d}x-\langle \kappa_0^{-2}\mathscr{B}_{\rm TE}
u,v\rangle_{\Gamma_R^+}.
\end{equation*}

The following theorem concerns the well-posedness for the variational problem
\eqref{TE-VF} and the proof can be found in \cite{L18}.

\begin{theorem}
The variational problem \eqref{TE-VF} has a unique weak solution in $H^1(\Omega)$, which satisfies the estimate
\begin{equation*}
  \|u\|_{H^1(\Omega)}\lesssim \|g\|_{H^{-1/2}_{\rm TE}(\Gamma_R^+)}.
\end{equation*}
\end{theorem}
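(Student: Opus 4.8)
The plan is to prove well-posedness by the Fredholm alternative: show that $a_{\rm TE}$ satisfies a G\aa rding inequality, reduce existence to uniqueness, and then recover the a priori bound from an inf-sup condition exactly as in \eqref{TM-is1}. First I would record the spectral form of the boundary term. For $u=\sum_{n\ge 0}b_n\cos(n\phi)$ on $\Gamma_R^+$, the $L^2(\Gamma_R^+)$-orthogonality of $\{\cos(n\phi)\}$ gives
\begin{equation*}
\langle\mathscr{B}_{\rm TE}u,u\rangle_{\Gamma_R^+}=\sum_{n=0}^{\infty}c_n\Lambda_n|b_n|^2,\qquad \Lambda_n:=\kappa_0\frac{H_n^{(1)'}(\kappa_0R)}{H_n^{(1)}(\kappa_0R)},
\end{equation*}
with positive weights $c_n>0$. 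Everything then hinges on the sign of the modal symbol $\Lambda_n$.

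The analytic input I would isolate is the classical pair of facts about logarithmic derivatives of Hankel functions: $\Im\Lambda_n>0$ for every $n\ge0$, and $\Re\Lambda_n\le 0$ for all but finitely many $n$ (indeed $\Re\Lambda_n\sim -n/R$ as $n\to\infty$). Testing with $v=u$ and taking real parts, the term $\int_\Omega\kappa^{-2}|\nabla u|^2$ is coercive in $\nabla u$ because $\kappa^{-2}$ is bounded above and below, the $-\|u\|_{L^2}^2$ term is a compact perturbation, and $-\Re\langle\mathscr{B}_{\rm TE}u,u\rangle_{\Gamma_R^+}\ge 0$ modulo a finite-rank correction. This yields the G\aa rding inequality $\Re a_{\rm TE}(u,u)+\|u\|_{L^2(\Omega)}^2\ge c\|u\|_{H^1(\Omega)}^2$. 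Since $H^1(\Omega)\hookrightarrow L^2(\Omega)$ compactly and the part of $\mathscr{B}_{\rm TE}$ deviating from its coercive leading-order behaviour is smoothing, $a_{\rm TE}$ is a compact perturbation of a coercive form, so the associated operator is Fredholm of index zero and existence follows once uniqueness is proved.

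Uniqueness is where I expect the real work to lie. Setting $g=0$ and testing with $v=u$, the volume integrals are real, so taking imaginary parts leaves $\Im a_{\rm TE}(u,u)=-\kappa_0^{-2}\sum_{n}c_n(\Im\Lambda_n)|b_n|^2=0$. As each term is nonnegative and $\Im\Lambda_n>0$, this forces $b_n=0$ for all $n$; hence $u=0$ and $\partial_r u=\mathscr{B}_{\rm TE}u=0$ on $\Gamma_R^+$, i.e.\ $u$ has vanishing Cauchy data there. Near $\Gamma_R^+$ the medium is homogeneous and the equation is $\Delta u+\kappa_0^2u=0$, so unique continuation propagates $u\equiv 0$ throughout the homogeneous part of $\Omega$; this transfers zero Cauchy data to the interface $\partial D$, and a further application of unique continuation for the elliptic equation $\nabla\cdot(\kappa^{-2}\nabla u)+u=0$ yields $u\equiv 0$ inside the cavity. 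The two delicate points are justifying $\Im\Lambda_n>0$ for all orders (the clean mode-by-mode radiation/absorption mechanism) and invoking unique continuation across the possibly discontinuous coefficient $\kappa$; the latter is favourable here because the problem is two-dimensional, where unique continuation holds under mild regularity of $\kappa^{-2}$.

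Finally, with existence and uniqueness in hand, the general theory of Babu\v{s}ka and Aziz \cite[Chapter 5]{BA73} upgrades the Fredholm statement to an inf-sup condition for $a_{\rm TE}$, in complete analogy with \eqref{TM-is1}. Combining this inf-sup bound with the continuity of the antilinear functional $v\mapsto\langle\kappa_0^{-2}g,v\rangle_{\Gamma_R^+}$ and the trace theorem delivers the claimed estimate $\|u\|_{H^1(\Omega)}\lesssim\|g\|_{H^{-1/2}_{\rm TE}(\Gamma_R^+)}$.
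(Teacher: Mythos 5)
Your proposal is correct, and it is essentially the proof the paper is relying on: the paper itself gives no argument for this theorem but defers to \cite{L18}, where well-posedness is established by exactly the route you reconstruct --- the Wronskian identity $\Im\bigl(H_n^{(1)'}(t)\overline{H_n^{(1)}(t)}\bigr)=2/(\pi t)$ giving $\Im\Lambda_n>0$, the nonpositivity of $\Re\Lambda_n$ (which in fact holds for \emph{all} $n\ge 0$ in two dimensions, so no finite-rank correction is even needed), a G\aa rding inequality plus Rellich compactness yielding a Fredholm operator of index zero, uniqueness from $\Im a_{\rm TE}(u,u)=0$ followed by unique continuation, and finally the Babu\v{s}ka--Aziz theory \cite[Chapter 5]{BA73} to convert uniqueness into the inf-sup condition and the stability bound.

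One point to repair: in the uniqueness step you assert that ``the volume integrals are real,'' which presumes $\kappa$ real, whereas the paper's setting explicitly allows lossy fillings (Example 1 uses $\epsilon=4+\mathrm{i}$). The argument still closes, but you must say why: if $\Im(\kappa^2)\ge 0$ then $\Im(\kappa^{-2})\le 0$, so in $\Im a_{\rm TE}(u,u)=\Im\int_\Omega\kappa^{-2}|\nabla u|^2\,{\rm d}x-\kappa_0^{-2}\sum_n c_n(\Im\Lambda_n)|b_n|^2$ both terms are nonpositive and must vanish separately, which again forces $b_n=0$ for all $n$ (and additionally $\nabla u=0$ on the lossy set, which only helps). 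With that amendment your sketch matches the cited proof in substance.
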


The general theory in Babu\v{s}ka and Aziz \cite[Chapter 5]{BA73}
implies that there exists a constant $C>0$ such that the following inf-sup
condition holds:
\begin{equation}\label{TE-is1}
 \sup\limits_{0\neq v\in H^1(\Omega)}\frac{|a_{\rm
TE}(u, v)|}{\|v\|_{H^1(\Omega)}}\geq
C\|u\|_{H^1(\Omega)}\quad \forall\, u \in H^1(\Omega).
\end{equation}

\subsection{The PML problem}

Using the complex coordinate stretching \eqref{rtilde-r}, we may similarly
obtain the truncated PML problem in the TE polarization:
\begin{equation}\label{TE-PML-P1}
\left\{
\begin{array}{ll}
 \nabla\cdot(\kappa^{-2}A \nabla u^{\rm PML})+\alpha\beta u^{\rm
PML}=G&\quad\mathrm{in}~\Omega_\rho,\\
  (A \nabla u^{\rm PML})\cdot \nu=0&\quad\mathrm{on}~\Gamma_g\cup S,\\
   u^{\rm PML}=u^{\rm ref}&\quad {\rm on}~\Gamma_\rho^+,
  \end{array}\right.
\end{equation}
where
\begin{equation*}
  G=\left\{
\begin{array}{ll}
\nabla\cdot(\kappa_0^{-2}A \nabla u^{\rm ref})+\alpha\beta u^{\rm ref}&\mathrm{in}~\Omega^{\rm PML},\\
0& {\rm otherwise}.
 \end{array}\right.
\end{equation*}

A DtN operator $\hat{\mathscr{B}}_{\rm TE}: H^{1/2}_{\rm
TE}(\Gamma_R^+)\rightarrow H^{-1/2}_{\rm TE}(\Gamma_R^+)$ is defined as follows:
given $f\in H^{1/2}_{\rm TE}(\Gamma_R^+)$,
\begin{equation*}
  \hat{\mathscr{B}}_{\rm TE} f=\partial_r  \xi|_{\Gamma_R^+},
\end{equation*}
where $\xi \in H^1(\Omega^{\rm PML})$ satisfies
\begin{equation*}
\left\{
\begin{array}{ll}
 \nabla\cdot(\kappa^{-2}A \nabla \xi)+\alpha\beta
\xi=0&\quad\mathrm{in}~\Omega^{\rm PML},\\
  \xi=f&\quad\mathrm{on}~\Gamma_R^+,\\
   \xi=0&\quad {\rm on}~\Gamma_\rho^+,\\
   (A \nabla \xi)\cdot \nu=0&\quad {\rm on}~\Gamma_g.
  \end{array}\right.
\end{equation*}
By imposing the boundary condition
\begin{equation*}
\partial_r(u^{\rm PML}-u^{\rm ref})=\hat{\mathscr{B}}_{\rm TE}
(u^{\rm PML}-u^{\rm ref})\quad\text{on}~ \Gamma_R^+,
\end{equation*}
the problem \eqref{TE-PML-P1} can be reformulated as
\begin{equation}\label{TE-PML-P2}
\left\{
\begin{array}{ll}
 \nabla\cdot(\kappa^{-2}A \nabla u^{\rm PML})+\alpha\beta u^{\rm
PML}=0&\quad\mathrm{in}~\Omega,\\
 (A \nabla u^{\rm PML})\cdot \nu=0&\quad\mathrm{on}~\Gamma_g\cup S,\\
   \partial_r u^{\rm PML}=\hat{\mathscr{B}}_{\rm TE}u^{\rm PML}+\hat{g}&\quad
{\rm on}~\Gamma_R^+,
  \end{array}\right.
\end{equation}
where $\hat{g}=\partial_r u^{\rm ref}-\hat{\mathscr{B}}_{\rm TE}u^{\rm ref}$.
The weak formulation of the problem \eqref{TE-PML-P2} is to find $u^{\rm PML}\in
H^1(\Omega)$ such that
\begin{equation}\label{TE-PML-VF}
\hat{a}_{\rm TE}(u^{\rm PML},v)=\langle
\kappa_0^{-2}\hat{g},v\rangle_{\Gamma_R^+}\quad \forall\, v\in H^1(\Omega),
\end{equation}
were the sesquilinear form $\hat{a}_{\rm TE}(\cdot,\cdot): H^1(\Omega)\times
H^1(\Omega)\rightarrow \mathbb{C}$ is defined as
\begin{equation*}
  \hat{a}_{\rm TE}(u,v)=\int_{\Omega}\left(\kappa^{-2}A\nabla u\cdot \nabla \bar{v}- \alpha\beta u\bar{v}\right){\rm d}x-\langle \kappa_0^{-2}\hat{\mathscr{B}}_{\rm TE} u,v\rangle_{\Gamma_R^+}.
\end{equation*}

\subsection{Convergence of the PML problem}

Consider a Dirichlet boundary value problem of the PML equation in the PML layer
$\Omega^{\rm PML}$:
\begin{equation}\label{TE-PML-P3}
\left\{
\begin{array}{ll}
 \nabla\cdot(\kappa_0^{-2}A \nabla w)+\alpha\beta
w=0&\quad\mathrm{in}~\Omega^{\rm PML},\\
  w=0&\quad\mathrm{on}~\Gamma_R^+,\\
  w=q&\quad {\rm on}~\Gamma_{\rho}^+,\\
   (A \nabla w)\cdot \nu=0&\quad\mathrm{on}~\Gamma_g,
  \end{array}\right.
\end{equation}
where $q\in H^{1/2}_{\rm TE}(\Gamma_{\rho}^+)$.

Define $H^1_{R\rho}(\Omega^{\rm PML})=\{v\in H^1(\Omega^{\rm PML}): v=0 ~{\rm
on}~\Gamma_R^+ ~{\rm and}~\Gamma_{\rho}^+\}$. The weak formulation of
\eqref{TE-PML-P3} reads as follows: given $q\in H^{1/2}_{\rm
TE}(\Gamma_{\rho}^+)$, find $w\in H^1(\Omega^{\rm PML})$ such that $w=0~{\rm
on}~ \Gamma_R^+,w=q~{\rm on}~ \Gamma_{\rho}^+$ and
\begin{equation}\label{TE-PML-P9}
\hat{b}(w, v)=0 \quad \forall\, v\in H^1_{R\rho}(\Omega^{\rm PML}),
\end{equation}
where
\begin{equation*}
\hat{b}(u, v)=\int_R^{\rho}\int_0^{\pi}\left(\kappa_0^{-2}\left(\frac{\beta
r}{\alpha} \partial_r u \partial_r \bar{v}+\frac{\alpha}{\beta
r} \partial_\phi u\partial_\phi \bar{v}\right)-\alpha\beta
r u\bar{v}\right){\rm d}r{\rm d}\phi.
\end{equation*}
Here we also assume that the PML problem \eqref{TE-PML-P9} admits a unique weak
solution.

For any $u\in H^1(\Omega^{\rm PML})$, define
\begin{equation*}
\|u\|_{\ast,\Omega^{\rm PML}}=\left[\int_R^{\rho}\int_0^{\pi}\left(\Big(\frac{
1+\sigma\hat{\sigma}}{1+\sigma^2}\Big)r |\partial_r
u|^2+\Big(\frac{1+\sigma\hat{\sigma}}{1+\hat{\sigma}^2}\Big)\frac{1}{ r}
|\partial_\phi u|^2+ (1+\sigma\hat{\sigma})\kappa_0^2 r|u|^2\right){\rm d}r{\rm
d}\phi \right]^{1/2}.
\end{equation*}
It is easy to see that $\|\cdot\|_{\ast,\Omega^{\rm PML}}$ is an equivalent norm
on $H^1(\Omega^{\rm PML})$. By using the general theory in \cite[Chapter
5]{BA73}, there exists a positive constant $\hat{C}$ such that
\begin{equation*}
\sup_{0\neq v\in H^1_0(\Omega^{\rm
PML})}\frac{|\hat{b}(u, v)|}{\|v\|_{\ast,\Omega^{\rm
PML}}}\geq\hat{C}\|u\|_{\ast,\Omega^{\rm PML}}\quad \forall\, u\in
H^1_{R\rho}(\Omega^{\rm PML}),
\end{equation*}
The constant $\hat{C}$ depends on the domain $\Omega^{\rm PML}$ and the wave
number $\kappa_0$.

The following results concern the estimates of the solution for the boundary
value problem \eqref{TE-PML-P3} and are crucial for the convergence analysis.
The proof is essentially the same as that in \cite[Theorem 2.4]{CL05} and is
omitted for brevity.

\begin{theorem}
There exists a constant $C>0$ independent of $\kappa_0, R, \rho$, and $\sigma_0$
such that the following estimates are satisfied:
\begin{eqnarray}
\label{TE-e1}\||\alpha|^{-1}\nabla w\|_{L^2(\Omega^{\rm PML})} &\leq&
C\hat{C}^{-1}\kappa_0^{-2}(1+\kappa_0 R)|\alpha_0|\|q\|_{H^{-1/2}_{\rm
TE}(\Gamma_{\rho}^+)},\\
\label{TE-e2}\left\|\partial_r w\right\|_{H^{-1/2}_{\rm TM}(\Gamma_R^+)} &\leq&
C\hat{C}^{-1}\kappa_0^{-2}(1+\kappa_0 R)^2|\alpha_0|^2\|q\|_{H^{-1/2}_{\rm
TE}(\Gamma_{\rho}^+)},
\end{eqnarray}
where $\alpha_0=1+i\sigma_0$.
\end{theorem}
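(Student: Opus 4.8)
The plan is to follow verbatim the structure used for the TM estimates \eqref{TM-e1}--\eqref{TM-e2} (ultimately that of \cite[Theorem 2.4]{CL05}), adjusting for the three features that distinguish the TE setting: the weight $\kappa_0^{-2}$ multiplying the stretched gradient in \eqref{TE-PML-P3}, which is exactly what produces the extra $\kappa_0^{-2}$ on the right-hand sides; the use of the cosine modes $\cos(n\phi)$ (now including the $n=0$ term), so that every trace norm is an $H^s_{\rm TE}$ norm; and the Neumann condition $(A\nabla w)\cdot\nu=0$ on $\Gamma_g$ replacing the homogeneous Dirichlet condition, which forces the test space $H^1_{R\rho}(\Omega^{\rm PML})$ instead of $H^1_0$. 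Throughout I would work with the energy norm $\|\cdot\|_{\ast,\Omega^{\rm PML}}$ and its equivalence to the usual $H^1(\Omega^{\rm PML})$ norm, recalling that the radial coefficient in the $\ast$-norm is comparable to $|\alpha|^{-2}r$.

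For \eqref{TE-e1} I would first remove the inhomogeneous Dirichlet data. Construct an explicit lifting $w_0\in H^1(\Omega^{\rm PML})$ with $w_0=0$ on $\Gamma_R^+$, $w_0=q$ on $\Gamma_\rho^+$ and $(A\nabla w_0)\cdot\nu=0$ on $\Gamma_g$, built mode by mode from the expansion of $q$ in $\cos(n\phi)$ with a radial profile taken from the stretched Hankel functions $H_n^{(1)}(\kappa_0\tilde r)$; bounding the Hankel ratios at the complex argument $\kappa_0\tilde r$ yields $\||\alpha|^{-1}\nabla w_0\|_{L^2(\Omega^{\rm PML})}\lesssim \hat C^{-1}\kappa_0^{-2}(1+\kappa_0 R)|\alpha_0|\,\|q\|_{H^{-1/2}_{\rm TE}(\Gamma_\rho^+)}$ together with a matching bound on $\|w_0\|_{\ast,\Omega^{\rm PML}}$. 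The difference $\tilde w:=w-w_0$ then lies in $H^1_{R\rho}(\Omega^{\rm PML})$ and satisfies $\hat b(\tilde w,v)=-\hat b(w_0,v)$ for all $v\in H^1_{R\rho}(\Omega^{\rm PML})$. Applying the inf-sup condition for $\hat b$ stated above and estimating the functional $v\mapsto\hat b(w_0,v)$ by a weighted Cauchy--Schwarz inequality gives $\|\tilde w\|_{\ast,\Omega^{\rm PML}}\le C\hat C^{-1}\|w_0\|_{\ast,\Omega^{\rm PML}}$; combining this with the bound on $w_0$ and the norm equivalence produces \eqref{TE-e1}.

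For \eqref{TE-e2} I would argue by duality on $\Gamma_R^+$. For arbitrary $\psi\in H^{1/2}_{\rm TE}(\Gamma_R^+)$, let $\tilde\psi\in H^1(\Omega^{\rm PML})$ be the extension solving the conjugated homogeneous TE--PML equation with $\tilde\psi=\psi$ on $\Gamma_R^+$, $\tilde\psi=0$ on $\Gamma_\rho^+$ and $(A\nabla\tilde\psi)\cdot\nu=0$ on $\Gamma_g$, i.e.\ the TE analogue of \eqref{TM-PML-P8} used in Lemma \ref{TML:L4}. Testing \eqref{TE-PML-P3} against $\bar{\tilde\psi}$ and integrating by parts once expresses $\kappa_0^{-2}\langle\partial_r w,\psi\rangle_{\Gamma_R^+}$ as a bulk integral over $\Omega^{\rm PML}$ involving $\nabla w$, $w$, $\nabla\tilde\psi$ and $\tilde\psi$; the boundary contributions on $\Gamma_\rho^+$ and $\Gamma_g$ vanish because $w=0$ there is replaced by $\tilde\psi=0$ on $\Gamma_\rho^+$ and by the Neumann datum on $\Gamma_g$. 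A weighted Cauchy--Schwarz inequality then bounds this by the product of $\|w\|_{\ast,\Omega^{\rm PML}}$ and the weighted gradient norm $\||\alpha|^{-1}\gamma\nabla\tilde\psi\|_{L^2(\Omega^{\rm PML})}$, where $\gamma$ is the exponential weight of Lemma \ref{TML:L4}. Inserting \eqref{TE-e1} (with the norm equivalence) for the first factor and the TE analogue of Lemma \ref{TML:L4}, namely $\||\alpha|^{-1}\gamma\nabla\tilde\psi\|_{L^2}\lesssim \hat C^{-1}(1+\kappa_0 R)|\alpha_0|\,\|\psi\|_{H^{1/2}_{\rm TE}(\Gamma_R^+)}$, and taking the supremum over $\psi$ yields the $H^{-1/2}$ bound \eqref{TE-e2} with the expected extra factor $(1+\kappa_0 R)|\alpha_0|$.

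The main obstacle is the construction and sharp estimation of the lifting $w_0$ and the dual extension $\tilde\psi$ with constants explicit in, and uniform over, $\kappa_0, R, \rho, \sigma_0$. Concretely, the powers $(1+\kappa_0 R)$ and $|\alpha_0|$ must be extracted from mode-by-mode bounds on the ratios $H_n^{(1)}(\kappa_0\tilde r)/H_n^{(1)}(\kappa_0\tilde\rho)$ and their radial derivatives at the complex stretched argument, which is precisely where the decay of the Hankel functions enters; once these mode estimates are in hand, the variational step and the duality step are routine. The remaining care is purely bookkeeping: tracking the extra $\kappa_0^{-2}$ through the TE form $\hat b$, and confirming that the Neumann datum on $\Gamma_g$ contributes no boundary terms in the integrations by parts.
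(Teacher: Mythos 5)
Your proposal is correct and follows essentially the same route as the paper, which omits the proof and simply defers to \cite[Theorem 2.4]{CL05}: an explicit lifting of the Dirichlet data on $\Gamma_\rho^+$ combined with the inf-sup condition for $\hat b$ gives the energy estimate, and a duality argument with the adjoint PML extension (the TE analogue of Lemma \ref{TML:L4}) gives the trace bound, with the mode-by-mode stretched Hankel estimates supplying the explicit factors $(1+\kappa_0 R)$ and $|\alpha_0|$. Minor bookkeeping aside---the factor $\hat C^{-1}$ properly arises in the inf-sup step rather than in the lifting bound itself, and the norms $H^{-1/2}_{\rm TE}(\Gamma_\rho^+)$ and $H^{-1/2}_{\rm TM}(\Gamma_R^+)$ in the statement are evidently typos for $H^{1/2}_{\rm TE}(\Gamma_\rho^+)$ and $H^{-1/2}_{\rm TE}(\Gamma_R^+)$, consistent with the TM case---your sketch matches the intended argument.
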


Similarly, for any function $f\in H^{1/2}_{\rm TE}(\Gamma_R^+)$, we introduce
the propagation operator $\mathscr P_{\rm TE}:H^{1/2}_{\rm
TE}(\Gamma_R^+)\rightarrow H^{1/2}_{\rm TE}(\Gamma_{\rho}^+)$ as follows:
\begin{equation*}
\mathscr P_{\rm TE}(f)=\sum_{n=0}^{\infty}
\frac{H_n^{(1)}(\kappa_0\tilde{\rho})}{H_n^{(1)} (\kappa_0R)}f_n\cos(n\phi),
\end{equation*}
where
\begin{equation*}
   f_0=\frac{1}{\pi}\int_0^{\pi}f(R,\phi){\rm d}\phi,\quad f_n=\frac{2}{\pi}\int_0^{\pi}f(R,\phi)\cos (n \phi){\rm d}\phi.
\end{equation*}
As discussed in \cite{CL05}, the operator $\mathscr P_{\rm TE}:H^{1/2}_{\rm
TE}(\Gamma_R^+)\rightarrow H^{1/2}_{\rm TE}(\Gamma_{\rho}^+)$ is well defined
and satisfies the estimate
\begin{equation}\label{TE-e3}
\|\mathscr P_{\rm TE}(f)\|_{H^{1/2}_{\rm TE}(\Gamma_{\rho}^+)}\leq
e^{-\kappa_0\Im(\tilde{\rho})\left(1-\frac{R^2}{|\tilde{\rho}|^2}\right)^{1/2}}
\|f\|_{H^{1/2}_{\rm TE}(\Gamma_R^+)}\quad \forall \, \rho\geq R.
\end{equation}

\begin{lemma}\label{TE:TBCC}
For any $f\in H^{1/2}_{\rm TE}(\Gamma_R^+)$, the following estimate holds:
\begin{equation}\label{TE-TBCC}
\|(\mathscr{B}_{\rm TE}-\hat{\mathscr{B}}_{\rm TE})f\|_{H^{-1/2}_{\rm
TE}(\Gamma_R^+)}\leq C\hat{C}^{-1}\kappa_0^{-2}(1+\kappa_0R)^2|\alpha_0|^2e^{
-\kappa_0\Im(\tilde{\rho
})\left(1-\frac{R^2}{|\tilde{\rho}|^2}\right)^{1/2}}\|f\|_{H^{1/2}_{\rm
TE}(\Gamma_R^+)}.
\end{equation}
\end{lemma}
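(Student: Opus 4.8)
The plan is to mirror the argument used for Lemma \ref{L:TBCC} in the TM case, replacing the sine expansions by cosine expansions and the homogeneous Dirichlet condition on $\Gamma_g$ by the Neumann condition $(A\nabla w)\cdot\nu=0$ appropriate for the TE polarization. The goal is to identify $(\mathscr{B}_{\rm TE}-\hat{\mathscr{B}}_{\rm TE})f$ with the radial trace on $\Gamma_R^+$ of the solution of the layer problem \eqref{TE-PML-P3}, and then to chain the trace estimate \eqref{TE-e2} with the exponential decay estimate \eqref{TE-e3}.

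First I would realize the difference of the two DtN operators as a single PML layer problem. Given $f\in H^{1/2}_{\rm TE}(\Gamma_R^+)$ with Fourier coefficients $f_n$, I would consider the complex-stretched radiating field $u^{\rm s,PML}(r,\phi)=\sum_{n=0}^{\infty}\frac{H_n^{(1)}(\kappa_0\tilde{r})}{H_n^{(1)}(\kappa_0 R)}f_n\cos(n\phi)$, which solves the PML equation in $\Omega^{\rm PML}$. Because $\sigma\equiv0$ on $[0,R)$, one has $\tilde{r}=R$ and $\alpha(R)=1$ at $r=R$, so $u^{\rm s,PML}=f$ and $\partial_r u^{\rm s,PML}=\mathscr{B}_{\rm TE}f$ on $\Gamma_R^+$; by the very definition of $\mathscr{P}_{\rm TE}$ its trace on $\Gamma_{\rho}^+$ is $\mathscr{P}_{\rm TE}(f)$; and the cosine structure forces $\partial_\phi u^{\rm s,PML}=0$ on $\Gamma_g$, which is the Neumann condition there. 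Subtracting the field $\xi$ that defines $\hat{\mathscr{B}}_{\rm TE}f$ (same PML equation, with $\xi=f$ on $\Gamma_R^+$, $\xi=0$ on $\Gamma_{\rho}^+$, Neumann on $\Gamma_g$), the difference $w:=u^{\rm s,PML}-\xi$ satisfies exactly \eqref{TE-PML-P3} with $q=\mathscr{P}_{\rm TE}(f)$ and $w=0$ on $\Gamma_R^+$, and by linearity $\partial_r w|_{\Gamma_R^+}=(\mathscr{B}_{\rm TE}-\hat{\mathscr{B}}_{\rm TE})f$.

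Next I would quantify this trace. The estimate \eqref{TE-e2} bounds $\|\partial_r w\|_{H^{-1/2}_{\rm TE}(\Gamma_R^+)}$ by $C\hat{C}^{-1}\kappa_0^{-2}(1+\kappa_0 R)^2|\alpha_0|^2\|q\|_{H^{-1/2}_{\rm TE}(\Gamma_{\rho}^+)}$ with $q=\mathscr{P}_{\rm TE}(f)$. Since the embedding $H^{1/2}_{\rm TE}(\Gamma_{\rho}^+)\hookrightarrow H^{-1/2}_{\rm TE}(\Gamma_{\rho}^+)$ has norm at most one (term by term, $(1+n^2)^{-1/2}\le(1+n^2)^{1/2}$), I may replace the $H^{-1/2}$ norm of $\mathscr{P}_{\rm TE}(f)$ by its $H^{1/2}$ norm and then apply \eqref{TE-e3}; chaining the three bounds reproduces precisely the right-hand side of \eqref{TE-TBCC}.

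I expect no genuine obstacle, as every ingredient is already available. The only point requiring care is the verification that the stretched radiating series truly solves \eqref{TE-PML-P3} with the correct data on all three pieces of $\partial\Omega^{\rm PML}$ — in particular that its trace on $\Gamma_{\rho}^+$ is exactly $\mathscr{P}_{\rm TE}(f)$ and its normal derivative on $\Gamma_R^+$ is exactly $\mathscr{B}_{\rm TE}f$. These identities hinge on $\tilde{r}=R$ and $\alpha(R)=1$ at the inner boundary together with the definition of $\mathscr{P}_{\rm TE}$, and they are what makes the difference of DtN operators collapse to a single layer problem driven by the exponentially small datum $\mathscr{P}_{\rm TE}(f)$.
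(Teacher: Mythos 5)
Your proposal is correct and takes essentially the same route as the paper's proof: both realize $(\mathscr{B}_{\rm TE}-\hat{\mathscr{B}}_{\rm TE})f$ as $\partial_r w|_{\Gamma_R^+}$ for the layer problem \eqref{TE-PML-P3} with datum $q=\mathscr{P}_{\rm TE}(f)$ and then chain \eqref{TE-e2} with \eqref{TE-e3}; your version merely makes explicit, via the stretched Hankel--cosine series, the identity \eqref{TE-TBCdiffer} that the paper asserts directly from the definitions. Your embedding remark $\|q\|_{H^{-1/2}_{\rm TE}(\Gamma_\rho^+)}\leq\|q\|_{H^{1/2}_{\rm TE}(\Gamma_\rho^+)}$ also correctly bridges the norm mismatch between the stated form of \eqref{TE-e2} and its use in the paper's proof, where the $H^{1/2}$ norm appears on the right-hand side.
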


\begin{proof}
For any $f\in H^{1/2}_{\rm TE}(\Gamma_R^+)$, we have
\begin{equation}\label{TE-TBCdiffer}
(\mathscr{B}_{\rm TE}-\hat{\mathscr{B}}_{\rm TE})f=\partial_r w|_{\Gamma_R^+},
\end{equation}
where $w\in H^1(\Omega^{\rm PML})$ satisfies
\begin{equation*}
\left\{
\begin{array}{ll}
 \nabla\cdot(\kappa_0^{-2}A \nabla w)+\alpha\beta
w=0&\quad\mathrm{in}~\Omega^{\rm PML},\\
  w=0&\quad\mathrm{on}~\Gamma_R^+,\\
   w=\mathscr P_{\rm TE}(f)&\quad {\rm on}~\Gamma_{\rho}^+,\\
   (A \nabla w)\cdot\nu=0&\quad\mathrm{on}~\Gamma_g.\\
  \end{array}\right.
\end{equation*}
It follows from \eqref{TE-e2}--\eqref{TE-e3} that
\begin{eqnarray*}
  \left\|\partial_r w\right\|_{H^{-1/2}_{\rm TM}(\Gamma_R^+)} &\leq&
C\hat{C}^{-1}\kappa_0^{-2}(1+\kappa_0R)^2|\alpha_0|^2\|\mathscr
P_{\rm TE}(f)\|_{H^{1/2}_{ \rm
TM}(\Gamma_{\rho}^+)}\\
&\leq& C\hat{C}^{-1}\kappa_0^{-2}(1+\kappa_0R)^2|\alpha_0|^2e^{
-\kappa_0\Im(\tilde{\rho
})\left(1-\frac{R^2}{|\tilde{\rho}|^2}\right)^{1/2}}\|f\|_{H^{1/2}_{\rm
TM}(\Gamma_R^+)},
\end{eqnarray*}
which completes the proof.
\end{proof}

Below is the main result of this subsection.

\begin{theorem}
For sufficiently large $\sigma_0>0$, the PML problem \eqref{TE-PML-P2} has a
unique solution $u^{\rm PML}\in H^1(\Omega_{\rho})$. Moreover, the
following estimate holds:
\begin{equation*}
\|u-u^{\rm PML}\|_{H^1(\Omega)}\leq
C\hat{C}^{-1}(\kappa_0^{-2}+\kappa_0^{-1}R)^2|\alpha_0|^2e^{-\kappa_0\Im(\tilde{
\rho})(1-\frac{R^2}{|\tilde{\rho}|^2})^{1/2}}\|u^{\rm PML}-u^{\rm
ref}\|_{H^{1/2}_{\rm TM}(\Gamma_R^+)}.
\end{equation*}
\end{theorem}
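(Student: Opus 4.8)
The plan is to mirror the convergence proof given above for the TM polarization, since the TE variational formulations \eqref{TE-VF} and \eqref{TE-PML-VF} have the same structure. First I would establish existence and uniqueness of $u^{\rm PML}$ by a perturbation argument. The form $a_{\rm TE}$ already satisfies the inf-sup condition \eqref{TE-is1}, and on $\Omega$ the only difference between $a_{\rm TE}$ and $\hat a_{\rm TE}$ is the boundary term $\langle \kappa_0^{-2}(\mathscr B_{\rm TE}-\hat{\mathscr B}_{\rm TE})u,v\rangle_{\Gamma_R^+}$, because $A=I$ and $\alpha=\beta=1$ there force the two volume integrals to coincide. By Lemma \ref{TE:TBCC} the operator norm of this perturbation is controlled by the exponentially small factor $e^{-\kappa_0\Im(\tilde\rho)(1-R^2/|\tilde\rho|^2)^{1/2}}$, which can be made smaller than the inf-sup constant by choosing $\sigma_0$ large. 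A standard perturbation of the inf-sup condition, as in \cite[Theorem 2.4]{CW03}, then yields a unique solution $u^{\rm PML}\in H^1(\Omega_\rho)$.

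For the error estimate I would derive the analog of the TM error representation. Using \eqref{TE-VF} I write $a_{\rm TE}(u-u^{\rm PML},\varphi)=\langle \kappa_0^{-2}g,\varphi\rangle_{\Gamma_R^+}-a_{\rm TE}(u^{\rm PML},\varphi)$ for any $\varphi\in H^1(\Omega)$, insert $\pm\langle\kappa_0^{-2}\hat g,\varphi\rangle_{\Gamma_R^+}$, and use $g-\hat g=(\hat{\mathscr B}_{\rm TE}-\mathscr B_{\rm TE})u^{\rm ref}$ together with $\hat a_{\rm TE}(u^{\rm PML},\varphi)=\langle\kappa_0^{-2}\hat g,\varphi\rangle_{\Gamma_R^+}$ from \eqref{TE-PML-VF}. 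Since the volume parts of $a_{\rm TE}$ and $\hat a_{\rm TE}$ agree on $\Omega$, their difference collapses to $\langle\kappa_0^{-2}(\mathscr B_{\rm TE}-\hat{\mathscr B}_{\rm TE})u^{\rm PML},\varphi\rangle_{\Gamma_R^+}$, and the two boundary contributions telescope into
\begin{equation*}
a_{\rm TE}(u-u^{\rm PML},\varphi)=\langle\kappa_0^{-2}(\mathscr B_{\rm TE}-\hat{\mathscr B}_{\rm TE})(u^{\rm PML}-u^{\rm ref}),\varphi\rangle_{\Gamma_R^+}.
\end{equation*}

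Finally I would bound the right-hand side by the Cauchy--Schwarz inequality in the $H^{-1/2}_{\rm TE}$--$H^{1/2}_{\rm TE}$ duality, apply Lemma \ref{TE:TBCC} to $(\mathscr B_{\rm TE}-\hat{\mathscr B}_{\rm TE})(u^{\rm PML}-u^{\rm ref})$, and invoke the inf-sup condition \eqref{TE-is1} to pass from $\sup_{\varphi}|a_{\rm TE}(u-u^{\rm PML},\varphi)|/\|\varphi\|_{H^1(\Omega)}$ to $\|u-u^{\rm PML}\|_{H^1(\Omega)}$. The one point requiring genuine care is the bookkeeping of the two factors of $\kappa_0^{-2}$: the explicit weight $\kappa_0^{-2}$ in the TE duality pairing multiplies the factor $\kappa_0^{-2}(1+\kappa_0R)^2$ already present in Lemma \ref{TE:TBCC}, producing $\kappa_0^{-4}(1+\kappa_0R)^2=(\kappa_0^{-2}+\kappa_0^{-1}R)^2$, which is precisely the prefactor in the stated estimate. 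I expect the main obstacle to be the perturbation argument for unique solvability rather than the error representation, since the latter is an essentially verbatim transcription of the TM computation once one verifies that $A=I$ and $\alpha\beta=1$ hold throughout $\Omega$.
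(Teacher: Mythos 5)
Your proposal is correct and follows essentially the same route as the paper: the paper's proof derives exactly your telescoping identity $a_{\rm TE}(u-u^{\rm PML},\varphi)=\langle\kappa_0^{-2}(\mathscr{B}_{\rm TE}-\hat{\mathscr{B}}_{\rm TE})(u^{\rm PML}-u^{\rm ref}),\varphi\rangle_{\Gamma_R^+}$ by inserting $\pm\langle\kappa_0^{-2}\hat g,\varphi\rangle_{\Gamma_R^+}$ and then concludes via Lemma \ref{TE:TBCC} and the inf-sup condition \eqref{TE-is1}, with unique solvability handled (as in the TM case) by the perturbation argument of \cite[Theorem 2.4]{CW03} that you sketch. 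Your explicit bookkeeping of the two factors of $\kappa_0^{-2}$ producing $\kappa_0^{-4}(1+\kappa_0 R)^2=(\kappa_0^{-2}+\kappa_0^{-1}R)^2$ is a detail the paper leaves implicit, and it matches the stated prefactor.
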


\begin{proof}
By \eqref{TE-VF} and \eqref{TE-PML-VF}, for any $\varphi \in H^1(\Omega)$, we
have
\begin{eqnarray*}
  a_{\rm TE}(u-u^{\rm PML},\varphi) &=&a_{\rm TE}(u,\varphi)-a_{\rm TE}(u^{\rm
PML},\varphi)  \\
   &=&\langle \kappa_0^{-2}g,\varphi\rangle_{\Gamma_R^+}-a_{\rm TE}(u^{\rm
PML},\varphi)  \\
   &=& \langle\kappa_0^{-2}(g-\hat{g}),\varphi\rangle_{\Gamma_R^+}+\langle
\kappa_0^{-2}\hat{g},\varphi\rangle_{\Gamma_R^+}-a(u^{\rm PML},\varphi) \\
  &=& \langle\kappa_0^{-2} (\hat{\mathscr{B}}_{\rm TE}-\mathscr{B}_{\rm
TE})u^{\rm ref},\varphi\rangle_{\Gamma_R^+}+\hat{a}_{\rm TE}(u^{\rm
PML},\varphi)-a_{\rm TE}(u^{\rm PML},\varphi) \\
  &=&\langle\kappa_0^{-2} (\mathscr{B}_{\rm TE}-\hat{\mathscr{B}}_{\rm
TE})(u^{\rm PML}-u^{\rm ref}),\varphi\rangle_{\Gamma_R^+},
\end{eqnarray*}
which implies the desired estimate by using Lemma \ref{TE:TBCC} and
\eqref{TE-is1}.
\end{proof}

\subsection{Finite element approximation}

Let $b(\cdot, \cdot):H^1(\Omega_{\rho})\times H^1(\Omega_{\rho})\rightarrow
\mathbb{C}$ be the sesquilinear form given by
 \begin{equation*}
  b(u, v)=\int_{\Omega_{\rho}}(\kappa^{-2}A\nabla u\cdot
\nabla\bar{v}-\alpha\beta u\bar{v}){\rm d}x.
\end{equation*}
Define $ H^1_{\rho}(\Omega_{\rho})=\{u\in H^1(\Omega_{\rho}): u=0~{\rm on}~
\Gamma_{\rho}^+\}$. The the weak formulation of \eqref{TE-PML-P1} is to find
$u^{\rm PML}\in H^1(\Omega_{\rho})$ and $u^{\rm PML}=u^{\rm ref} ~{\rm
on}~\Gamma_{\rho}^+ $ such that
\begin{equation}\label{TE-PML-P5}
b(u^{\rm PML}, v)=-\int_{\Omega_{\rho}}G\bar{v}{\rm d}x\quad \forall\,
v\in H^1_{\rho}(\Omega_{\rho}).
\end{equation}
Let $V_h$ be a conforming finite element space of $H^1(\Omega_\rho)$, i.e.,
\begin{equation*}
  V_h=\{v_h\in C(\bar{\Omega}_{\rho}):v_h|_K\in P_m(K), \forall K\in
\mathcal{M}_h\}.
\end{equation*}
Denote
\begin{equation*}
  V_{\rho, h}=\{v_h\in V_h: v_h=0 ~{\rm on} ~\Gamma_{\rho}^+\}.
\end{equation*}
The finite element approximation to the variational problem \eqref{TE-PML-P5} is to find $u_h\in V_h$ with $u_h=u^{\rm ref} ~{\rm on}~ \Gamma_{\rho}^+$ such that
\begin{equation}\label{TE-PML-P7}
b(u_h, v_h)=-\int_{\Omega_{\rho}}G\bar{v}_h{\rm d}x \quad \forall\,
v_h\in V_{\rho, h}.
\end{equation}

\subsection{A posteriori error analysis}

For any $K\in \mathcal{M}_h$, we introduce the residual
\begin{equation*}
  R_K(u):=\nabla\cdot(\kappa^{-2}A\nabla u|_K)+\alpha\beta u|_K.
\end{equation*}
Let $\mathcal{B}_h$ denote the set of all the edges that do not lie on $\partial \Gamma_{\rho}^+$. For any interior edge $e$, which is the common side of triangular elements $K_1,K_2\in \mathcal{M}_h$, we define the jump residual across $e$ as
\begin{equation*}
 J_e:=-(\kappa^{-2}A\nabla u_h|_{K_1}\cdot \nu_1+\kappa^{-2}A\nabla
u_h|_{K_2}\cdot \nu_2),
\end{equation*}
where $\nu_j$ is the unit outward normal vector on the boundary of $K_j,j=1,2$. If $e=\partial K\cap(\Gamma_g\cup S) $ for some $K\in \mathcal{M}_h$, then define the jump residual
\begin{equation*}
J_e:=2(\kappa^{-2}A\nabla u_h|_K\cdot \nu).
\end{equation*}
Let
\begin{equation*}
\tilde{R}_K:=
\left\{
\begin{array}{ll}
 R_K(u_h)&\quad{\rm if}~ K\in \mathcal{M}_h\cap \Omega,\\
  R_K(u_h-u^{\rm ref})&\quad {\rm if}~ K\in \mathcal{M}_h\cap \Omega^{\rm PML}.
  \end{array}\right.
\end{equation*}
For any triangle $K\in \mathcal{M}_h$, denote by $\eta_K$ the local error estimator as follows:
\begin{equation*}
  \eta_K=\max_{x\in
\tilde{K}}w(x)\Big(\|h_K\tilde{R}_K\|_{L^2(K)}^2+\frac{1}{2}\sum_{e\in
K}\|h_e^{1/2}J_e\|_{L^2(e)}^2\Big)^{1/2},
\end{equation*}
where
\begin{equation*}
w(x)=
\left\{
\begin{array}{ll}
 1&\quad{\rm if}~ x\in \bar{\Omega},\\
|\frac{\alpha}{\alpha_0}|e^{-\kappa_0\Im{\tilde{r}}\left(1-\frac{r^2}{
|\tilde{r}|^2}\right)^{1/2}}&\quad {\rm if}~ x\in \Omega^{\rm PML}.
  \end{array}\right.
\end{equation*}

For any $\varphi\in H^1(\Omega_R)$, let $\tilde{\varphi}$ be its extension in $\Omega^{\rm PML}$ such that
\begin{equation}\label{TE-PML-P8}
\left\{
\begin{array}{ll}
 \nabla\cdot(\kappa_0^{-2}\bar {A} \nabla
\tilde{\varphi})+\overline{\alpha\beta}\tilde{\varphi}
=0&\quad\mathrm{in}~\Omega^{\rm PML},\\
  \tilde{\varphi}=\varphi&\quad\mathrm{on}~\Gamma_R^+,\\
   \tilde{\varphi}=0&\quad {\rm on}~\Gamma_{\rho}^+,\\
   (\bar{A}\nabla\tilde{\varphi})\cdot \nu=0&\quad {\rm on}~\Gamma_g.
  \end{array}\right.
\end{equation}

The proofs of Lemmas \ref{TEL:L4} and \ref{TEL:L1} are essentially the same as
those in  \cite[Lemmas 4.1 and 4.4]{CL05}, while the proof of Lemma
\ref{TEL:L2} is also similar to Lemma \ref{TML:L2}.

\begin{lemma}\label{TEL:L4}
For any $\varphi\in H^1(\Omega)$, let $\tilde{\varphi}$ be its extension in
$H^1(\Omega^{\rm PML})$ according to \eqref{TE-PML-P8}. Then
there exists a constant $C>0$ independent of $\kappa_0,R,\rho$ and $\sigma_0$
such that
\begin{equation*}
\||\alpha|^{-1}\gamma\nabla\tilde{\varphi}\|_{L^2(\Omega^{\rm PML})}\leq C\hat{C}^{-1}\kappa_0^{-2}(1+\kappa_0 R)|\alpha_0|\|\varphi\|_{H^{1/2}(\Gamma_R^+)},
\end{equation*}
where $\gamma(r)=e^{\kappa_0\Im{\tilde{r}}\left(1-\frac{r^2}{|\tilde{r}|^2}\right)^{1/2}}$.
\end{lemma}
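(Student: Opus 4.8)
The plan is to mirror the proof of Lemma \ref{TML:L4} for the TM case, which itself follows \cite[Lemma 4.4]{CL05}, adapting it to the TE boundary conditions and to the $\kappa^{-2}$ coefficient in the operator. First I would pass to the complex conjugate: setting $\psi=\overline{\tilde\varphi}$, the extension problem \eqref{TE-PML-P8} becomes the forward PML equation $\nabla\cdot(\kappa_0^{-2}A\nabla\psi)+\alpha\beta\psi=0$ in $\Omega^{\rm PML}$, with $\psi=\bar\varphi$ on $\Gamma_R^+$, $\psi=0$ on $\Gamma_\rho^+$, and $(A\nabla\psi)\cdot\nu=0$ on $\Gamma_g$. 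Since $|\alpha|$ and $\gamma$ are real and positive and the $L^2$ norm of the gradient is invariant under conjugation, it suffices to bound $\||\alpha|^{-1}\gamma\nabla\psi\|_{L^2(\Omega^{\rm PML})}$. Because the homogeneous Neumann condition holds on the flat boundary $\Gamma_g$ (that is, at $\phi=0$ and $\phi=\pi$), I would expand the trace in the TE cosine basis, $\bar\varphi(R,\phi)=\sum_{n\ge0}c_n\cos(n\phi)$, and separate variables, writing $\psi=\sum_{n\ge0}\psi_n(r)\cos(n\phi)$.

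Each radial coefficient $\psi_n$ then solves a two-point boundary value problem on $[R,\rho]$ for the stretched Bessel equation, whose independent solutions are $H_n^{(1)}(\kappa_0\tilde r)$ and $H_n^{(2)}(\kappa_0\tilde r)$; imposing $\psi_n(\rho)=0$ and $\psi_n(R)=c_n$ determines $\psi_n$ explicitly. The decisive point, exactly as in \eqref{TE-e3}, is that $H_n^{(1)}$ decays exponentially for the complex argument $\kappa_0\tilde r$, so the dominant contribution is the outgoing part $\psi_n(r)\approx c_n\, H_n^{(1)}(\kappa_0\tilde r)/H_n^{(1)}(\kappa_0R)$, whose modulus decays in $r$ at precisely the rate $\gamma(r)^{-1}$, the correction that enforces the Dirichlet condition at $\rho$ being exponentially small in $\sigma_0$ and $\rho$. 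I would insert this representation into $\||\alpha|^{-1}\gamma\nabla\psi\|_{L^2(\Omega^{\rm PML})}^2=\int_R^\rho\int_0^\pi|\alpha|^{-2}\gamma^2(|\partial_r\psi|^2+r^{-2}|\partial_\phi\psi|^2)\,r\,\mathrm d\phi\,\mathrm dr$, use the orthogonality of $\{\cos(n\phi)\}$ to diagonalize, and estimate each mode by uniform bounds on the ratios $H_n^{(1)}(\kappa_0\tilde r)/H_n^{(1)}(\kappa_0R)$ and their $r$-derivatives. The weight $\gamma^2$ is built to cancel the squared decay of these ratios, leaving an integrand that is bounded after integration in $r$; summing the resulting series and recognizing $\sum_{n\ge0}(1+n^2)^{1/2}|c_n|^2\sim\|\varphi\|_{H^{1/2}(\Gamma_R^+)}^2$ yields the stated estimate, with the factor $\kappa_0^{-2}$ coming from the coefficient in the TE operator and $(1+\kappa_0R)|\alpha_0|$ from the Hankel-ratio and radial-derivative bounds, precisely as in the TM case.

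The main obstacle is the uniform control, across all angular modes $n$ and all admissible values of $\kappa_0,R,\rho,\sigma_0$, of the weighted Hankel-function ratios, so that the final constant $C$ is genuinely independent of these parameters. This rests on the delicate monotonicity and uniform asymptotic properties of $H_n^{(1)}$ with large complex argument $\kappa_0\tilde r$ that already underlie \eqref{TE-e1} and \eqref{TE-e3}; in particular one must verify that $\gamma(r)$ neither under- nor over-compensates the modal decay uniformly in $n$, and that the radial derivative contributes at most the announced algebraic growth $(1+\kappa_0R)$. Once these Hankel estimates are in hand, the remaining work is routine bookkeeping of constants, entirely parallel to the proof of Lemma \ref{TML:L4}.
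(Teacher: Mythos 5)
Your plan goes off the rails at the point where you solve each modal two--point boundary value problem exactly: you write $\psi_n$ as the combination of $H_n^{(1)}(\kappa_0\tilde r)$ and $H_n^{(2)}(\kappa_0\tilde r)$ fitted to the data at both $r=R$ and $r=\rho$, and you declare the part enforcing $\psi_n(\rho)=0$ exponentially small. The coefficients of that combination carry the Wronskian-type denominator $H_n^{(1)}(\kappa_0\tilde\rho)H_n^{(2)}(\kappa_0 R)-H_n^{(2)}(\kappa_0\tilde\rho)H_n^{(1)}(\kappa_0 R)$, and nothing prevents it from being small or zero: its zeros are precisely the PML-layer eigenvalues that the paper warns about when it says the uniqueness of the layer problem \eqref{TE-PML-P9} cannot in general be guaranteed and is only \emph{assumed}, quantified through the inf-sup constant $\hat C$. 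A per-mode explicit inversion would require lower bounds on these denominators uniform in $n$ and in $\kappa_0,R,\rho,\sigma_0$; this is not available and is not implied by the global inf-sup assumption in any quantitative modal form. The tell-tale sign is that the estimate you must prove contains the factor $\hat C^{-1}$, yet your argument never invokes the inf-sup condition -- that factor cannot be produced by Hankel asymptotics alone, so the mechanism that makes the bound true is missing from your proposal. Your closing paragraph identifies "uniform control of the Hankel ratios" as the main obstacle, but the genuinely delicate issue is solvability of the two-point problems (resonances), which uniform asymptotics of $H_n^{(1)}$ do not address.

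The proof the paper points to (\cite[Lemma 4.4]{CL05}, adapted verbatim to the TE setting) avoids the two-point inversion altogether. After conjugating exactly as you do, split $\bar{\tilde\varphi}=\xi+w$, where $\xi=\sum_{n\ge 0}c_n\,\frac{H_n^{(1)}(\kappa_0\tilde r)}{H_n^{(1)}(\kappa_0R)}\cos(n\phi)$ is the purely outgoing lifting: it satisfies the PML equation and the Neumann condition on $\Gamma_g$, matches $\bar\varphi$ on $\Gamma_R^+$, but leaves the trace $\mathscr P_{\rm TE}(\bar\varphi)$ on $\Gamma_\rho^+$ instead of zero. The weighted norm $\||\alpha|^{-1}\gamma\nabla\xi\|_{L^2(\Omega^{\rm PML})}$ is estimated directly by the uniform-in-$n$ monotonicity bounds for the ratios $H_n^{(1)}(\kappa_0\tilde r)/H_n^{(1)}(\kappa_0R)$ and their radial derivatives -- this is the sound part of your plan, and it is where $\kappa_0^{-2}(1+\kappa_0R)|\alpha_0|$ enters. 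The remainder $w$ solves the layer problem with zero data on $\Gamma_R^+$ and datum $-\mathscr P_{\rm TE}(\bar\varphi)$ on $\Gamma_\rho^+$, and is estimated by \eqref{TE-e1}; this is where $\hat C^{-1}$ enters. The weight is then harmless because $\gamma$ is increasing, so $\gamma(r)\le e^{\kappa_0\Im(\tilde\rho)\left(1-\frac{\rho^2}{|\tilde\rho|^2}\right)^{1/2}}$, while \eqref{TE-e3} supplies the stronger decay $e^{-\kappa_0\Im(\tilde\rho)\left(1-\frac{R^2}{|\tilde\rho|^2}\right)^{1/2}}$ for the datum of $w$; since $\rho>R$ the product of the two exponentials is at most one. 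Replacing your exact modal solve by this lifting-plus-inf-sup-correction closes the gap; the rest of your computation (cosine orthogonality, summation against $(1+n^2)^{1/2}$, identification of $\|\varphi\|_{H^{1/2}(\Gamma_R^+)}$) then goes through as written.
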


\begin{lemma}\label{TEL:L1}
For any $\varphi,\psi\in H^1(\Omega^{\rm PML})$, we have
\begin{equation*}
  \langle\hat{\mathscr{B}}_{\rm TE}\varphi,\psi\rangle=\langle\hat{\mathscr{B}}_{\rm TE}\bar{\psi},\bar{\varphi}\rangle
\end{equation*}
\end{lemma}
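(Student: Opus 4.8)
The plan is to recognize the asserted identity as the symmetry of a bilinear form naturally attached to the PML operator, in exact parallel with the TM case of Lemma \ref{TML:L1}. By the definition of $\hat{\mathscr{B}}_{\rm TE}$, I would introduce $\xi_1,\xi_2\in H^1(\Omega^{\rm PML})$ as the solutions of the defining boundary value problem with Dirichlet data $\varphi$ and $\bar{\psi}$ on $\Gamma_R^+$, respectively, so that $\hat{\mathscr{B}}_{\rm TE}\varphi=\partial_r\xi_1|_{\Gamma_R^+}$ and $\hat{\mathscr{B}}_{\rm TE}\bar{\psi}=\partial_r\xi_2|_{\Gamma_R^+}$. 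Both functions satisfy $\nabla\cdot(\kappa_0^{-2}A\nabla\xi_j)+\alpha\beta\,\xi_j=0$ in $\Omega^{\rm PML}$, vanish on $\Gamma_{\rho}^+$, and obey the conormal condition $(A\nabla\xi_j)\cdot\nu=0$ on $\Gamma_g$.

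First I would multiply the equation for $\xi_1$ by $\xi_2$---crucially \emph{without} taking a complex conjugate---and integrate by parts over $\Omega^{\rm PML}$. Because $\xi_2=0$ on $\Gamma_{\rho}^+$ and $(A\nabla\xi_1)\cdot\nu=0$ on $\Gamma_g$, the boundary term survives only on $\Gamma_R^+$; and since $\alpha=\beta=1$ (hence $A=I$) at $r=R$, the conormal derivative there reduces to $\pm\partial_r\xi_1$. This produces
\[
\int_{\Omega^{\rm PML}}\bigl(\kappa_0^{-2}A\nabla\xi_1\cdot\nabla\xi_2-\alpha\beta\,\xi_1\xi_2\bigr)\,{\rm d}x
=-\kappa_0^{-2}\int_{\Gamma_R^+}\partial_r\xi_1\,\xi_2\,{\rm d}s.
\]
Denote the left-hand side by $B(\xi_1,\xi_2)$. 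The key observation is that $B$ is a \emph{symmetric} bilinear form: the matrix $A$ is symmetric and the zeroth-order coefficient $\alpha\beta$ is scalar, so $B(\xi_1,\xi_2)=B(\xi_2,\xi_1)$.

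Performing the same integration by parts with the roles of $\xi_1$ and $\xi_2$ interchanged gives $B(\xi_2,\xi_1)=-\kappa_0^{-2}\int_{\Gamma_R^+}\partial_r\xi_2\,\xi_1\,{\rm d}s$. Equating the two expressions and cancelling the constant $\kappa_0^{-2}$ yields $\int_{\Gamma_R^+}\partial_r\xi_1\,\xi_2\,{\rm d}s=\int_{\Gamma_R^+}\partial_r\xi_2\,\xi_1\,{\rm d}s$. Substituting the traces $\xi_1=\varphi$, $\xi_2=\bar{\psi}$ on $\Gamma_R^+$ together with $\partial_r\xi_1=\hat{\mathscr{B}}_{\rm TE}\varphi$ and $\partial_r\xi_2=\hat{\mathscr{B}}_{\rm TE}\bar{\psi}$, the left side becomes $\int_{\Gamma_R^+}(\hat{\mathscr{B}}_{\rm TE}\varphi)\,\bar{\psi}\,{\rm d}s=\langle\hat{\mathscr{B}}_{\rm TE}\varphi,\psi\rangle$, while the right side becomes $\int_{\Gamma_R^+}(\hat{\mathscr{B}}_{\rm TE}\bar{\psi})\,\overline{\bar{\varphi}}\,{\rm d}s=\langle\hat{\mathscr{B}}_{\rm TE}\bar{\psi},\bar{\varphi}\rangle$, which is exactly the claimed identity.

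The only genuine subtlety---and the step most easily mishandled---is to pair each equation with the plain (unconjugated) second solution rather than its conjugate; this is what converts the sesquilinear structure into a truly symmetric bilinear form and makes the symmetry of $A$ usable. I expect the boundary bookkeeping to be the one place requiring care, in particular noting that in the TE polarization it is the conormal condition $(A\nabla\xi_j)\cdot\nu=0$ on $\Gamma_g$---rather than the homogeneous Dirichlet condition exploited in the TM case of Lemma \ref{TML:L1}---that annihilates the $\Gamma_g$ contribution. No analytic or regularity obstacle arises beyond the standard trace and integration-by-parts facts already available for $H^1(\Omega^{\rm PML})$ solutions.
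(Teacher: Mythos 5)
Your proposal is correct and follows essentially the same route as the paper, which omits the proof and refers to \cite[Lemma 4.1]{CL05}: there, too, the identity is obtained by pairing the PML equation for one extension against the other solution \emph{without} conjugation, exploiting the symmetry of $A$ and the scalar coefficient $\alpha\beta$, and integrating by parts so that only the $\Gamma_R^+$ term (where $A=I$, $\alpha=\beta=1$) survives. You also correctly identify the one TE-specific modification, namely that the $\Gamma_g$ boundary term is annihilated by the conormal condition $(A\nabla\xi_j)\cdot\nu=0$ rather than by the Dirichlet condition used in the TM case.
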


\begin{lemma}\label{TEL:L2}
For any $\varphi\in H^1(\Omega)$, let $\tilde\varphi$ be its extension in
$H^1(\Omega_{\rho})$ according to \eqref{TE-PML-P8}. For $\xi\in
H_{0}^1(\Omega_{\rho})$, the following identity holds:
\begin{equation*}
\int_{\Omega^{\rm PML}}\left(\kappa_0^{-2}A\nabla\xi\cdot \nabla\bar{\tilde{\varphi}}-\alpha\beta\xi\bar{\tilde{\varphi}} \right){\rm d}x=-\langle\kappa_0^{-2}\hat{\mathscr{B}}_{\rm TE}\xi,\varphi\rangle_{\Gamma_R^+}.
\end{equation*}
\end{lemma}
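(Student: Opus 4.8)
The plan is to follow the proof of Lemma~\ref{TML:L2} almost verbatim, tracking the two structural differences between the TE and TM formulations: the coefficient $\kappa_0^{-2}$ now multiplies the gradient term instead of the mass term, and the essential Dirichlet condition on $\Gamma_g$ used in the TM case is replaced by the natural Neumann condition $(\bar A\nabla\tilde\varphi)\cdot\nu=0$ prescribed in \eqref{TE-PML-P8}.

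First I would multiply the governing equation of \eqref{TE-PML-P8} by $\bar\xi$, integrate over $\Omega^{\rm PML}$, and apply Green's formula. The boundary $\partial\Omega^{\rm PML}$ consists of $\Gamma_R^+$, $\Gamma_\rho^+$, and the flat segments lying on $\Gamma_g$; note that the cavity wall $S$ does not appear, since $S\subset\partial D$ lies inside $\Omega$ rather than on $\partial\Omega^{\rm PML}$. The contribution on $\Gamma_\rho^+$ vanishes because $\xi\in H_0^1(\Omega_\rho)$ forces $\xi=0$ there, while the contribution on $\Gamma_g$ vanishes by the Neumann condition imposed on $\tilde\varphi$. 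Thus only the integral over $\Gamma_R^+$ survives, and using $A=I$, $\alpha=\beta=1$ on $\Gamma_R^+$ together with the fact that the outward normal of $\Omega^{\rm PML}$ there equals $-\boldsymbol{e}_r$, I arrive at
\begin{equation*}
\int_{\Omega^{\rm PML}}\bigl(\kappa_0^{-2}\bar A\nabla\tilde\varphi\cdot\nabla\bar\xi-\overline{\alpha\beta}\,\tilde\varphi\bar\xi\bigr)\,{\rm d}x=-\int_{\Gamma_R^+}\kappa_0^{-2}\partial_\nu\tilde\varphi\,\bar\xi\,{\rm d}s,
\end{equation*}
where $\nu$ denotes the unit normal on $\Gamma_R^+$ pointing out of $\Omega$.

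Next I would take the complex conjugate of this identity (using that $A$ is symmetric and $\kappa_0$ is real) to transfer the conjugation onto $\tilde\varphi$, which produces
\begin{equation*}
\int_{\Omega^{\rm PML}}\bigl(\kappa_0^{-2}A\nabla\xi\cdot\nabla\bar{\tilde\varphi}-\alpha\beta\,\xi\bar{\tilde\varphi}\bigr)\,{\rm d}x=-\int_{\Gamma_R^+}\kappa_0^{-2}\partial_\nu\bar{\tilde\varphi}\,\xi\,{\rm d}s.
\end{equation*}
The crucial observation is that $\bar{\tilde\varphi}$ itself solves the boundary value problem defining $\hat{\mathscr{B}}_{\rm TE}$ with data $\bar\varphi$: conjugating \eqref{TE-PML-P8} turns $\bar A$ and $\overline{\alpha\beta}$ back into $A$ and $\alpha\beta$ and leaves the zero condition on $\Gamma_\rho^+$ and the Neumann condition on $\Gamma_g$ intact. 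Hence $\partial_\nu\bar{\tilde\varphi}|_{\Gamma_R^+}=\hat{\mathscr{B}}_{\rm TE}\bar\varphi$, so the right-hand side equals $-\langle\kappa_0^{-2}\hat{\mathscr{B}}_{\rm TE}\bar\varphi,\bar\xi\rangle_{\Gamma_R^+}$. Finally, invoking the symmetry identity of Lemma~\ref{TEL:L1} with the substitution $\varphi\mapsto\bar\varphi$, $\psi\mapsto\bar\xi$ gives $\langle\hat{\mathscr{B}}_{\rm TE}\bar\varphi,\bar\xi\rangle_{\Gamma_R^+}=\langle\hat{\mathscr{B}}_{\rm TE}\xi,\varphi\rangle_{\Gamma_R^+}$, which yields the asserted identity.

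The one step deserving genuine care, rather than being a mechanical transcription from the TM proof, is the boundary bookkeeping in the integration by parts: in Lemma~\ref{TML:L2} the term on $\Gamma_g$ was annihilated by the essential condition $\bar\xi=0$, whereas here it must be annihilated instead by the natural condition $(\bar A\nabla\tilde\varphi)\cdot\nu=0$ built into the extension \eqref{TE-PML-P8}. Keeping the orientation of the normal consistent and confirming that the conjugated extension still satisfies the defining problem for $\hat{\mathscr{B}}_{\rm TE}$ are the only places where the TE structure genuinely enters.
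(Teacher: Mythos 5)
Your proof is correct and follows essentially the same route the paper intends: the paper omits the TE proof precisely because it mirrors Lemma \ref{TML:L2}, and you reproduce that argument (multiply \eqref{TE-PML-P8} by $\bar\xi$, integrate by parts, conjugate, identify $\partial_\nu\bar{\tilde\varphi}|_{\Gamma_R^+}=\hat{\mathscr{B}}_{\rm TE}\bar\varphi$, and finish with the symmetry identity of Lemma \ref{TEL:L1}). Your careful handling of the one genuine difference---the $\Gamma_g$ boundary term now vanishing via the natural condition $(\bar A\nabla\tilde\varphi)\cdot\nu=0$ rather than an essential condition on $\xi$---is exactly the right adjustment.
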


The following result present the error representation formula for the TE
polarization.

\begin{lemma}[error representation formula]\label{TEL:L3}
For any $\varphi\in H^1(\Omega)$, let $\tilde\varphi$ be its extension in
$H^1(\Omega_{\rho})$ according to \eqref{TE-PML-P8}. The for any $\varphi_h\in
V_h$, the following identity holds:
\begin{eqnarray*}
a_{\rm TE}(u-u_h,\varphi)&=&\langle\kappa^{-2} \mathscr{B}_{\rm TE}(u_h-u^{\rm
ref})-\kappa^{-2} \hat{\mathscr{B}}_{\rm TE}(u_h-u^{\rm ref}),
\varphi\rangle_{\Gamma_R^+}-b(u_h,\varphi-\varphi_h)\\
&&-\int_{\Omega^{\rm PML}}\big(  \nabla\cdot(\kappa_0^{-2}A \nabla u^{\rm
ref})+\alpha\beta u^{\rm ref}\big)(\bar{\tilde{\varphi}}-\bar{\varphi}_h){\rm
d}x.
\end{eqnarray*}
\end{lemma}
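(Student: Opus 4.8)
The plan is to mirror the proof of the TM error representation formula (Lemma \ref{TML:L3}), replacing the sesquilinear forms and DtN operators by their TE counterparts, and to take extra care of the fact that the natural boundary condition on $\Gamma_g\cup S$ is now of Neumann type. First I would start from the variational identity \eqref{TE-VF} and insert $\pm b(u_h,\varphi)$ and $\pm b(u_h,\varphi_h)$ to write
\begin{equation*}
a_{\rm TE}(u-u_h,\varphi)=\langle\kappa_0^{-2}g,\varphi\rangle_{\Gamma_R^+}-b(u_h,\varphi-\varphi_h)+b(u_h,\varphi)-b(u_h,\varphi_h)-a_{\rm TE}(u_h,\varphi).
\end{equation*}
The term $-b(u_h,\varphi-\varphi_h)$ is left unexpanded, since it is exactly the quantity that will later be integrated by parts into interior residuals and edge jumps in the main a posteriori estimate.

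Next I would evaluate $b(u_h,\varphi_h)$ through the finite element equation \eqref{TE-PML-P7}, replacing $G$ by $\nabla\cdot(\kappa_0^{-2}A\nabla u^{\rm ref})+\alpha\beta u^{\rm ref}$ on $\Omega^{\rm PML}$ (it vanishes elsewhere), then adding and subtracting the extension $\bar{\tilde\varphi}$ and integrating by parts on $\Omega^{\rm PML}$. This produces the volume term $\int_{\Omega^{\rm PML}}(\nabla\cdot(\kappa_0^{-2}A\nabla u^{\rm ref})+\alpha\beta u^{\rm ref})(\bar{\tilde\varphi}-\bar\varphi_h)\,{\rm d}x$ appearing in the statement, together with the $\Omega^{\rm PML}$ Dirichlet-form integral of $u^{\rm ref}$ against $\tilde\varphi$ and a boundary contribution $\int_{\partial\Omega^{\rm PML}}(\kappa_0^{-2}A\nabla u^{\rm ref})\cdot\nu\,\bar{\tilde\varphi}\,{\rm d}s$. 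The piece on $\Gamma_\rho^+$ drops because $\tilde\varphi=0$ there by \eqref{TE-PML-P8}, and the piece on $\Gamma_g$ drops because $\partial_{x_2}u^{\rm ref}=0$ on the ground plane, together with the diagonal structure of $A$ at $\phi=0,\pi$, forces $(A\nabla u^{\rm ref})\cdot\nu=0$ there; hence only the $\Gamma_R^+$ term $\langle\kappa_0^{-2}\partial_r u^{\rm ref},\varphi\rangle_{\Gamma_R^+}$ survives.

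I would then expand $b(u_h,\varphi)$ by splitting $\Omega_\rho=\Omega\cup\Omega^{\rm PML}$ (with $A=I,\ \alpha=\beta=1,\ \kappa$ on $\Omega$ and $\kappa=\kappa_0$ on $\Omega^{\rm PML}$, and with $\varphi$ replaced by $\tilde\varphi$ on the PML part) and expand $a_{\rm TE}(u_h,\varphi)$ from its definition, so that the interior Dirichlet-form integrals over $\Omega$ cancel and the term $\langle\kappa_0^{-2}\mathscr{B}_{\rm TE}u_h,\varphi\rangle_{\Gamma_R^+}$ is exposed. Collecting the three evaluations leaves the integral $\int_{\Omega^{\rm PML}}(\kappa_0^{-2}A\nabla(u_h-u^{\rm ref})\cdot\nabla\bar{\tilde\varphi}-\alpha\beta(u_h-u^{\rm ref})\bar{\tilde\varphi})\,{\rm d}x$, which Lemma \ref{TEL:L2} (applied with $\xi=u_h-u^{\rm ref}$) converts into $-\langle\kappa_0^{-2}\hat{\mathscr{B}}_{\rm TE}(u_h-u^{\rm ref}),\varphi\rangle_{\Gamma_R^+}$. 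Substituting $g=\partial_r u^{\rm ref}-\mathscr{B}_{\rm TE}u^{\rm ref}$ then cancels the loose $\partial_r u^{\rm ref}$ boundary terms and regroups the DtN contributions into $\langle\kappa_0^{-2}(\mathscr{B}_{\rm TE}-\hat{\mathscr{B}}_{\rm TE})(u_h-u^{\rm ref}),\varphi\rangle_{\Gamma_R^+}$, yielding the claimed identity.

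The main obstacle I anticipate is the careful bookkeeping of the boundary terms on $\Gamma_g\cup S$: unlike the TM case where $\varphi\in H_S^1(\Omega)$ vanishes there, here $\varphi$ is a general $H^1(\Omega)$ function, so one must verify that the natural condition $(A\nabla u^{\rm ref})\cdot\nu=0$, inherited from $\partial_\nu u^{\rm ref}=0$ and the structure of $A$ on the ground plane, annihilates every stray boundary integral produced during the integrations by parts. A secondary point requiring attention is carrying the weights $\kappa^{-2}$ and $\kappa_0^{-2}$ consistently through each step (noting $\kappa=\kappa_0$ on $\Gamma_R^+$), so that the DtN factor in the final identity reads $\kappa_0^{-2}$ in agreement with Lemma \ref{TE:TBCC}.
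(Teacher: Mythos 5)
Your proposal follows essentially the same route as the paper's proof: the same splitting $a_{\rm TE}(u-u_h,\varphi)=\langle\kappa_0^{-2}g,\varphi\rangle_{\Gamma_R^+}-b(u_h,\varphi-\varphi_h)+b(u_h,\varphi)-b(u_h,\varphi_h)-a_{\rm TE}(u_h,\varphi)$, the same evaluation of $b(u_h,\varphi_h)$ via the discrete equation and integration by parts over $\Omega^{\rm PML}$, the same use of Lemma \ref{TEL:L2} with $\xi=u_h-u^{\rm ref}$ to produce $-\langle\kappa_0^{-2}\hat{\mathscr{B}}_{\rm TE}(u_h-u^{\rm ref}),\varphi\rangle_{\Gamma_R^+}$, and the same final regrouping via $g=\partial_r u^{\rm ref}-\mathscr{B}_{\rm TE}u^{\rm ref}$. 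Your explicit verification that the stray boundary integrals on $\Gamma_g$ vanish through $(A\nabla u^{\rm ref})\cdot\nu=0$ is a point the paper passes over silently, but it is a refinement of the same argument rather than a different one.
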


\begin{proof}
It follows from \eqref{TE-VF} that
\begin{eqnarray}
\nonumber a_{\rm TE}(u-u_h,\varphi)&=&a_{\rm TE}(u,\varphi)-a_{\rm TE}(u_h,\varphi)  \\
\nonumber  &=&\langle\kappa^{-2} g, \varphi\rangle_{\Gamma_R^+}-b(u_h,\varphi-\varphi_h)\\\label{TE-eq5}
  &&+b(u_h,\varphi)-b(u_h,\varphi_h)-a_{\rm TE}(u_h,\varphi).
\end{eqnarray}
By the definition of the sesquilinear form $b$, we have
\begin{eqnarray*}
  b(u_h,\varphi) &=&\int_{\Omega_{\rho}}\left(\kappa^{-2}A \nabla u_h  \cdot\nabla \bar{\varphi}-\alpha\beta u_h\bar{\varphi}\right){\rm d}x  \\
   &=& \int_{\Omega}\left(\kappa^{-2}A \nabla u_h  \cdot\nabla \bar{\varphi}-\alpha\beta u_h\bar{\varphi}\right){\rm d}x+\int_{\Omega^{\rm PML}}\left(\kappa_0^{-2}A \nabla u_h  \cdot\nabla \bar{\tilde{\varphi}}-\alpha\beta u_h\bar{\tilde{\varphi}}\right){\rm d}x.
\end{eqnarray*}
We also get from the definition of the sesquilinear form $a_{\rm TE}$ that
\begin{eqnarray*}
  a_{\rm TE}(u_h,\varphi) &=&\int_{\Omega}\left(\kappa^{-2}A \nabla u_h  \cdot\nabla \bar{\varphi}-\alpha\beta u_h\bar{\varphi}\right){\rm d}x- \langle\kappa_0^{-2} \mathscr{B}_{\rm TE}u_h, \varphi\rangle_{\Gamma_R^+}.
\end{eqnarray*}
Using \eqref{TE-PML-P5} and the integration by parts yields
\begin{eqnarray*}
  b(u_h,\varphi_h) &=&- \int_{\Omega^{\rm PML}}G\bar{\varphi}_h{\rm d}x \\
   &=&-\int_{\Omega^{\rm PML}}\big(  \nabla\cdot(\kappa_0^{-2}A \nabla u^{\rm
ref})+\alpha\beta u^{\rm ref}\big)\bar{\varphi}_h{\rm d}x\\
   &=&\int_{\Omega^{\rm PML}}\big(\nabla\cdot(\kappa_0^{-2}A \nabla u^{\rm
ref})+\alpha\beta u^{\rm ref}\big)(\bar{\tilde{\varphi}}-\bar{\varphi}_h){\rm
d}x+\int_{\Omega^{\rm PML}}\big(\kappa_0^{-2} A \nabla u^{\rm ref}\cdot\nabla
\bar{\tilde{\varphi}} -\alpha\beta u^{\rm ref}\bar{\tilde{\varphi}}\big){\rm
d}x\\
   &&+\int_{\Gamma_R^+}\kappa_0^{-2}\partial_\nu u^{\rm
ref}\bar{\varphi}{\rm d}s.
\end{eqnarray*}
Combining the above equations leads to
\begin{eqnarray*}
   &&b(u_h,\varphi)-b(u_h,\varphi_h)-a_{\rm TE}(u_h,\varphi)  \\
   &=&-\int_{\Omega^{\rm PML}}\big(  \nabla\cdot(\kappa_0^{-2}A \nabla u^{\rm
ref})+\alpha\beta u^{\rm ref}\big)(\bar{\tilde{\varphi}}-\bar{\varphi}_h){\rm
d}x\\
   &&+\int_{\Omega^{\rm PML}}\big(\kappa_0^{-2}A \nabla (u_h-u^{\rm ref})
\cdot\nabla \bar{\tilde{\varphi}}-\alpha\beta (u_h-u^{\rm
ref})\bar{\tilde{\varphi}}\big){\rm d}x\\
   &&-\int_{\Gamma_R^+}\kappa_0^{-2}\partial _{\nu}u^{\rm ref}\bar{\varphi}{\rm d}s+ \langle\kappa_0^{-2} \mathscr{B}_{\rm TE}u_h, \varphi\rangle_{\Gamma_R^+}.
\end{eqnarray*}
By Lemma \ref{TEL:L2}, we have
\begin{eqnarray}
  \nonumber &&b(u_h,\varphi)-b(u_h,\varphi_h)-a_{\rm TE}(u_h,\varphi)  \\
   \nonumber&=&-\int_{\Omega^{\rm PML}}\big(  \nabla\cdot(\kappa_0^{-2}A \nabla
u^{\rm ref})+\alpha\beta u^{\rm
ref}\big)(\bar{\tilde{\varphi}}-\bar{\varphi}_h){\rm d}x\\
 &&+\langle\kappa_0^{-2} (\mathscr{B}_{\rm TE}-
\hat{\mathscr{B}}_{\rm TE})u_h,
\varphi\rangle_{\Gamma_R^+}
  -\langle\kappa_0^{-2}(\partial_{\nu} u^{\rm ref}-
\hat{\mathscr{B}}_{\rm TE}u^{\rm ref}),
\varphi\rangle_{\Gamma_R^+}.\label{TE-eq6}
\end{eqnarray}
Substituting \eqref{TE-eq6} into \eqref{TE-eq5}, we obtain
\begin{eqnarray*}
a_{\rm TE}(u-u_h,\varphi)&=&\langle\kappa_0^{-2} \mathscr{B}_{\rm TE}(u_h-u^{\rm ref})-\kappa_0^{-2} \hat{\mathscr{B}}_{\rm TE}(u_h-u^{\rm ref}), \varphi\rangle_{\Gamma_R^+}-b(u_h,\varphi-\varphi_h)\\
&&-\int_{\Omega^{\rm PML}}\big(  \nabla\cdot(\kappa_0^{-2}A \nabla u^{\rm
ref})+\alpha\beta u^{\rm ref}\big)(\bar{\tilde{\varphi}}-\bar{\varphi}_h){\rm
d}x,
\end{eqnarray*}
which completes the proof.
\end{proof}

The following theorem presents the a posteriori error estimate and is the main
result for the TE polarization.

\begin{theorem} \label{TE:main}
Let $u$ and $u_h$ be the solution of \eqref{TE-VF} and \eqref{TE-PML-P7},
respectively. Then there exists a positive constant $C$ depending only on the
minimum angle of the mesh $\mathcal{M}_h$ such that the following the a
posteriori error estimate holds:
\begin{eqnarray*}
\|u-u_h\|_{H^1(\Omega)} &\leq& C\hat{C}^{-1}\kappa_0^{-2}(1+\kappa
R)\bigg(\sum_{K\in\mathcal{M}_h}\eta_K^2\bigg)^{1/2}\\
&&+C\hat{C}^{-1}\kappa_0^{-2}(1+\kappa_0R)^2|\alpha_0|^2e^{-\kappa_0\Im{(\tilde{
\rho})}(1-\frac{R^2}{|\tilde{\rho}|^2})^{1/2}}\|u_h-u^{\rm ref}\|_{H^{1/2}_{\rm
TE}(\Gamma_R^+)} .
\end{eqnarray*}
\end{theorem}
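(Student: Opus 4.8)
The plan is to mirror the proof of Theorem \ref{T:T1}, starting from the error representation formula of Lemma \ref{TEL:L3} and bounding each of its three contributions. First I would fix an arbitrary $\varphi\in H^1(\Omega)$, extend it to $\tilde\varphi\in H^1(\Omega_\rho)$ on the PML region via \eqref{TE-PML-P8}, and take $\varphi_h=\Pi_h\varphi\in V_h$ to be its Cl\'ement-type interpolant. Substituting into Lemma \ref{TEL:L3} splits $a_{\rm TE}(u-u_h,\varphi)$ into a boundary term ${\rm I}_1$ involving $(\mathscr{B}_{\rm TE}-\hat{\mathscr{B}}_{\rm TE})(u_h-u^{\rm ref})$ on $\Gamma_R^+$, the volume residual term ${\rm I}_2=-b(u_h,\varphi-\Pi_h\varphi)$, and the PML-source term ${\rm I}_3=-\int_{\Omega^{\rm PML}}(\nabla\cdot(\kappa_0^{-2}A\nabla u^{\rm ref})+\alpha\beta u^{\rm ref})(\bar{\tilde\varphi}-\Pi_h\bar\varphi)\,{\rm d}x$, which I would group as in the TM case.

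For ${\rm I}_1$, the Cauchy--Schwarz inequality on $\Gamma_R^+$ together with the DtN-difference bound of Lemma \ref{TE:TBCC} produces the exponentially small factor $\kappa_0^{-2}(1+\kappa_0R)^2|\alpha_0|^2 e^{-\kappa_0\Im(\tilde\rho)(1-R^2/|\tilde\rho|^2)^{1/2}}$ multiplying $\|u_h-u^{\rm ref}\|_{H^{1/2}_{\rm TE}(\Gamma_R^+)}\|\varphi\|_{H^{1/2}_{\rm TE}(\Gamma_R^+)}$, which becomes the second term of the asserted estimate after the inf-sup step below. For ${\rm I}_2+{\rm I}_3$, I would integrate by parts element by element: on each $K$ this converts $b(u_h,\cdot)$ and the PML source into the interior residual $\tilde R_K$ and, across edges in $\mathcal B_h$, the jump $J_e$. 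The essential departure from the TM argument is that the Neumann condition $(A\nabla u^{\rm PML})\cdot\nu=0$ on $\Gamma_g\cup S$ means the test function need not vanish there, so the boundary flux $\kappa^{-2}A\nabla u_h\cdot\nu$ on the edges $e=\partial K\cap(\Gamma_g\cup S)$ survives and must be absorbed into the edge term; this is precisely why $J_e$ is defined with the factor $2$ on such edges. Applying Cauchy--Schwarz, the Cl\'ement estimates $\|v-\Pi_hv\|_{L^2(K)}\le Ch_K\|\nabla v\|_{L^2(\tilde K)}$ and $\|v-\Pi_hv\|_{L^2(e)}\le Ch_e^{1/2}\|\nabla v\|_{L^2(\tilde e)}$, and finally the weighted extension bound of Lemma \ref{TEL:L4} to control $\nabla\tilde\varphi$ over $\Omega^{\rm PML}$ by $\|\varphi\|_{H^{1/2}(\Gamma_R^+)}$, I would obtain $|{\rm I}_2+{\rm I}_3|\le C\hat C^{-1}\kappa_0^{-2}(1+\kappa R)(\sum_{K\in\mathcal M_h}\eta_K^2)^{1/2}\|\varphi\|_{H^{1/2}(\Gamma_R^+)}$; here the rescaling function $w$ in the definition of $\eta_K$ is chosen exactly to match the factor $|\alpha|^{-1}\gamma$ appearing in Lemma \ref{TEL:L4}.

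Combining the two bounds, invoking the inf-sup condition \eqref{TE-is1} in the form $\|u-u_h\|_{H^1(\Omega)}\le C\sup_{0\neq\varphi}|a_{\rm TE}(u-u_h,\varphi)|/\|\varphi\|_{H^1(\Omega)}$, and using the trace inequality $\|\varphi\|_{H^{1/2}(\Gamma_R^+)}\le C\|\varphi\|_{H^1(\Omega)}$ then yields the stated two-part estimate. The hard part is the bookkeeping on the PML side: one must propagate the $\kappa^{-2}$ and $\kappa_0^{-2}$ weights correctly through the integration by parts, ensure the boundary flux on $\Gamma_g\cup S$ lands in the edge estimator rather than being dropped, and verify that $w$ together with Lemma \ref{TEL:L4} turns the exponentially growing extension into a uniformly controlled quantity. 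Once these are aligned, the remaining steps are the routine Cauchy--Schwarz and interpolation estimates already used for the TM polarization.
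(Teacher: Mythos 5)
Your proposal matches the paper's proof of Theorem \ref{TE:main} in both structure and detail: the same splitting of $a_{\rm TE}(u-u_h,\varphi)$ into ${\rm I}_1+{\rm I}_2+{\rm I}_3$ via Lemma \ref{TEL:L3} with $\varphi_h=\Pi_h\varphi$, the bound on ${\rm I}_1$ by Lemma \ref{TE:TBCC}, the elementwise integration by parts with Cl\'ement estimates and the weighted bound of Lemma \ref{TEL:L4} for ${\rm I}_2+{\rm I}_3$, and the concluding inf-sup step \eqref{TE-is1}. Your observation that the Neumann condition on $\Gamma_g\cup S$ forces the boundary flux into the edge estimator, explaining the factor $2$ in the definition of $J_e$ there, is exactly the point at which the TE argument departs from the TM one, and it is handled correctly.
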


\begin{proof}
Taking $\varphi_h=\Pi_h \varphi$ and using Lemma \ref{TEL:L3}, we have
\begin{eqnarray*}
a_{\rm TE}(u-u_h,\varphi)&=&\langle\kappa_0^{-2} \mathscr{B}_{\rm TE}(u_h-u^{\rm
ref})-\kappa_0^{-2} \hat{\mathscr{B}}_{\rm TE}(u_h-u^{\rm ref}),
\varphi\rangle_{\Gamma_R^+}-b(u_h,\varphi-\varphi_h)\\
&&-\int_{\Omega^{\rm PML}}\big(  \nabla\cdot(\kappa_0^{-2}A \nabla
u^{\rm ref})+\alpha\beta u^{\rm
ref}\big)(\bar{\tilde{\varphi}}-\bar{\varphi}_h){\rm d}x\\
 &:=&{\rm I_1}+{\rm I}_2+{\rm I}_3.
\end{eqnarray*}
By Lemma \ref{TE-TBCC}, we get
\begin{eqnarray*}
|{\rm I}_1| &=&\left| \langle\kappa_0^{-2} \mathscr{B}_{\rm TE}(u_h-u^{\rm ref})
-\kappa_0^{-2}\mathscr{B}_{\rm TE}(u_h-u^{\rm
ref}),\varphi\rangle_{\Gamma_R^+}\right| \\
&\leq&C\hat{C}^{-1}\kappa_0^{-2}(1+\kappa_0R)^2|\alpha_0|^2e^{-\kappa_0\Im{
(\tilde{\rho})}(1-\frac{R^2}{|\tilde{\rho}|^2})^{1/2}}\|u_h-u^{\rm
ref}\|_{H^{1/2}_{\rm TE}(\Gamma_R^+)}\|\varphi\|_{H^{1/2}_{\rm TE}(\Gamma_R^+)}.
\end{eqnarray*}
It is easy to see that
\begin{eqnarray*}
{\rm I}_2+{\rm I}_3 &=&\sum_{K\in\mathcal{M}_h\cap \Omega}\bigg(\int_K
R_K(u_h)(\bar{\varphi}-\Pi_h \bar{\varphi}){\rm d}x+\sum_{e\in\partial
K}\frac{1}{2}\int_e J_e (\bar{\varphi}-\Pi_h \bar{\varphi}){\rm d}s\bigg)\\
  &+&  \sum_{K\in\mathcal{M}_h\cap \Omega^{\rm PML}}\bigg(\int_K R_K(u_h-u^{\rm
ref})(\bar{\varphi}-\Pi_h \bar{\varphi}){\rm d}x+\sum_{e\in\partial K\cap
\mathcal{B}_h}\frac{1}{2}\int_e J_e (\bar{\varphi}-\Pi_h \bar{\varphi}){\rm
d}s\bigg)
\end{eqnarray*}
It follows from the Cauchy--Schwarz inequality, the interpolation estimates, and
lemma \ref{TEL:L4} that
\begin{eqnarray*}
|{\rm I}_2+{\rm I}_3|&\leq& C\sum_{K\in\mathcal{M}_h}\bigg(\|h_K
\tilde{R}_K\|^2_{L^2(K)}+\frac{1}{2}\sum_{e\in\partial K\cap
\mathcal{B}_h}\|h_e^{1/2}J_e\|^2_{L^2(e)} \bigg)^{1/2}\|\nabla
\varphi\|_{L^2(\tilde{K})}\\
&\leq& C \sum_{K\in\mathcal{M}_h} \eta_K\|w^{-1}\nabla
\varphi\|_{L^2(\tilde{K})}\\
&\leq&C\hat{C}^{-1}\kappa_0^{-2}(1+\kappa_0R)\bigg(\sum_{K\in\mathcal{M}_h}
\eta_K^2\bigg)^{1/2}\|\varphi\|_{H^{1/2}(\Gamma_R^+)}.
\end{eqnarray*}
By the inf-sup condition \eqref{TE-is1}, we obtain
\begin{eqnarray*}
\|u-u_h\|_{H^1(\Omega)} &\leq& C\sup_{0\neq\varphi\in
H^1_0(\Omega)}\frac{|a_{\rm TE}(u-u_h,\varphi)}{\|\varphi\|_{H^1(\Omega)}} \\
&\leq& C\hat{C}^{-1}\kappa_0^{-2}(1+\kappa
R)\bigg(\sum_{K\in\mathcal{M}_h}\eta_K^2\bigg)^{1/2}\\
&&+C\hat{C}^{-1}\kappa_0^{-2}(1+\kappa_0R)^2|\alpha_0|^2e^{-\kappa_0\Im{(\tilde{
\rho})}(1-\frac{R^2}{|\tilde{\rho}|^2})^{1/2}}\|u_h-u^{\rm ref}\|_{H^{1/2}_{\rm
TE}(\Gamma_R^+)},
\end{eqnarray*}
which completes the proof.
\end{proof}

As can be seen from the Theorem \ref{TE:main}, the a posteriori error
estimate also consists of two parts: the finite element approximation error
and the PML error which decays exponentially with respect to the PML
parameters. In practice, we may choose the PML parameters appropriately such
that the PML error is negligible compared with the finite element approximation
error. The algorithm of the adaptive finite element PML method for the TE case
is similar to that of the method for the TM polarized open cavity scattering
problem described in Table  \ref{Tab1}.

\section{Numerical experiments}

In this section, we present some numerical examples to demonstrate the
performance of the adaptive finite element PML method. The method is
validated and compared with the adaptive finite element method with the
transparent boundary condition (TBC) which is proposed in \cite{YBL20}. In the
following examples, the PML parameters are $\rho=3R$,  $\sigma_0=20$ and $m=2$.

The physical quantity of interest associated with the cavity scattering is the
radar cross section (RCS), which measures the detectability of a target by a
radar system \cite{j-02}. When the incident angle and the observation angle
are the same, the RCS is called the backscatter RCS. The specific formulas can
be found in \cite{YBL20} for the backscatter RCS on both polarized wave
fields.

\subsection{Example 1}

We consider a benchmark example for the TM polarized wave fields \cite{j-02}.
The cavity has a rectangular shape with width $\lambda$ and depth $0.25
\lambda$.
Figure \ref{TM_Example1} shows the geometry of the cavity and the PML setting.
The wavenumber in the free space is $\kappa_0=32\pi$ and wavelength
$\lambda=2\pi/\kappa_0=1/16$. The PML layer is a semi-annulus region and
is imposed above the cavity with $R=1/2\lambda$. Two cases are considered in
this example: an empty cavity with no fillings inside of the cavity and a cavity
filled by a lossy medium with the electric permittivity $\epsilon=4+{\rm i}$ and
the magnetic permeability $\mu=1$. First, we compute the backscatter
RCS by using the adaptive finite element PML method and TBC method. For both
methods, the adaptive mesh refinements are stopped once the total number of
nodal points are over 15000. The backscatter RCS is shown as red solid lines and
blue circles in Figure \ref{TM_Example1} for the adaptive PML method and
adaptive TBC method, respectively. It is clear to note that the results obtained
by both methods are consistent with each other. Using the incident angle
$\theta=\pi/4$ as a representative example in case 1, we present the
adaptively refined mesh after 3 iterations with a total number of nodal points
1259 in Figure \ref{TM_Example1_2}. As expected, the mesh is refined
near the two corners of the cavity and keep relatively coarse near the outer
boundary of the PML layer, since the solution has singularity around the two
L-shaped corners and is smooth and flat in the PML region, particularly in the
part which is close to the outer boundary of the PML layer. The a posteriori
error estimates for case 1 at incident angle $\theta=\pi/4$ are plotted in
Figure \ref{TM_Example1_2} to show the convergence rate of the method. It
indicates that the meshes and the associated numerical complexity are
quasi-optimal, i.e., $\epsilon_h=\mathcal O({\rm DoF}_h^{-1/2})$ holds
asymptotically, where ${\rm DoF}_h$ is the degree of freedom or the total number
of nodal points for the mesh $\mathcal{M}_h$. As a comparison, the a posteriori
error estimates are also plotted for the adaptive TBC method in the red dashed
line. Clearly, the method also preserves the quasi-optimality. It can be
observed that the TBC method gives a smaller error than the PML method does for
the same number of nodal points. There are two reasons: the TBC method does not
require an artificial absorbing layer to enclose the physical domain which may
reduce the size of the computational domain; the a posteriori error estimates
may not be sharp for both methods as the lower bounds are not given. But the
PML method is simpler than the TBC method from the implementation point of
view. The PML method only involves the local Dirichlet boundary condition while
the TBC method has to handle the nonlocal TBC. Figure \ref{TM_Example1_3}
plots the ratio of ${\rm DoF}_h$ bewteen the physical domain and the
whole computational domain. It shows that the physical domain asymptotically
accounts for $70\%$ of the total number of nodal points which illustrates that
most of the nodal points are concentrated inside the physical domain and the a
posteriori error estimate is effective for the PML method.

\begin{figure}
\centering
\includegraphics[width=0.45\textwidth]{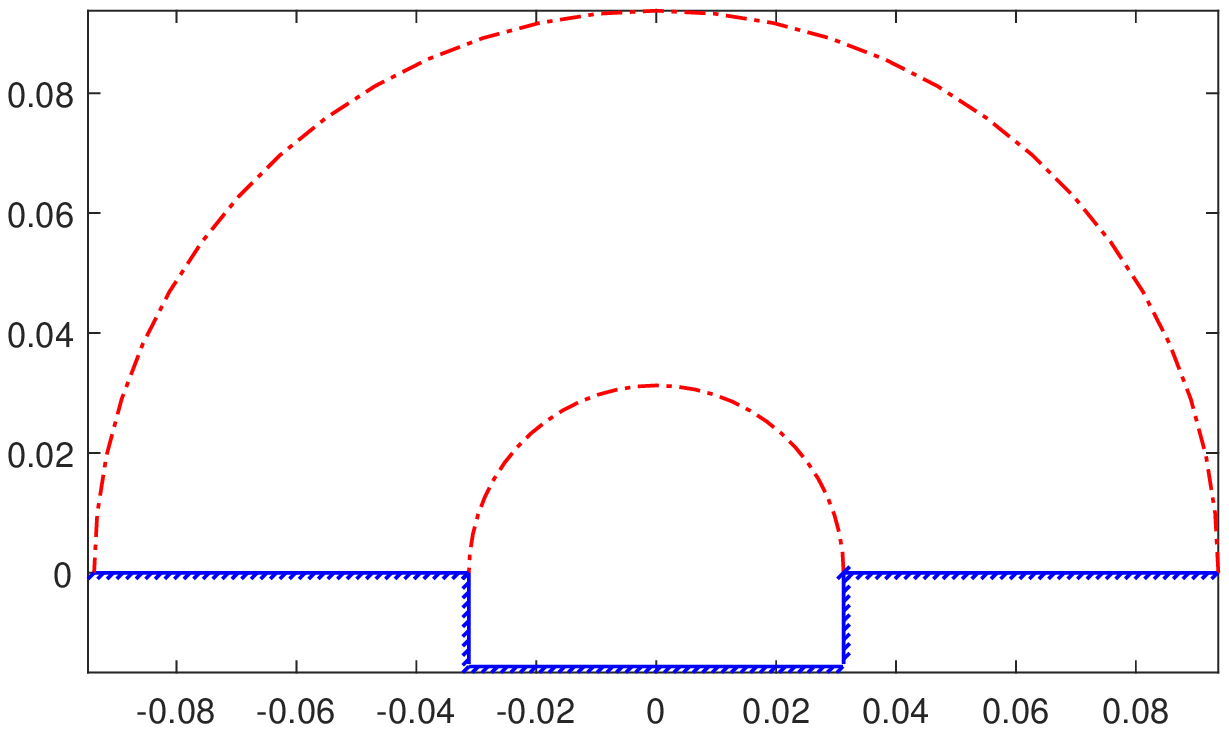}\quad
\includegraphics[width=0.45\textwidth]{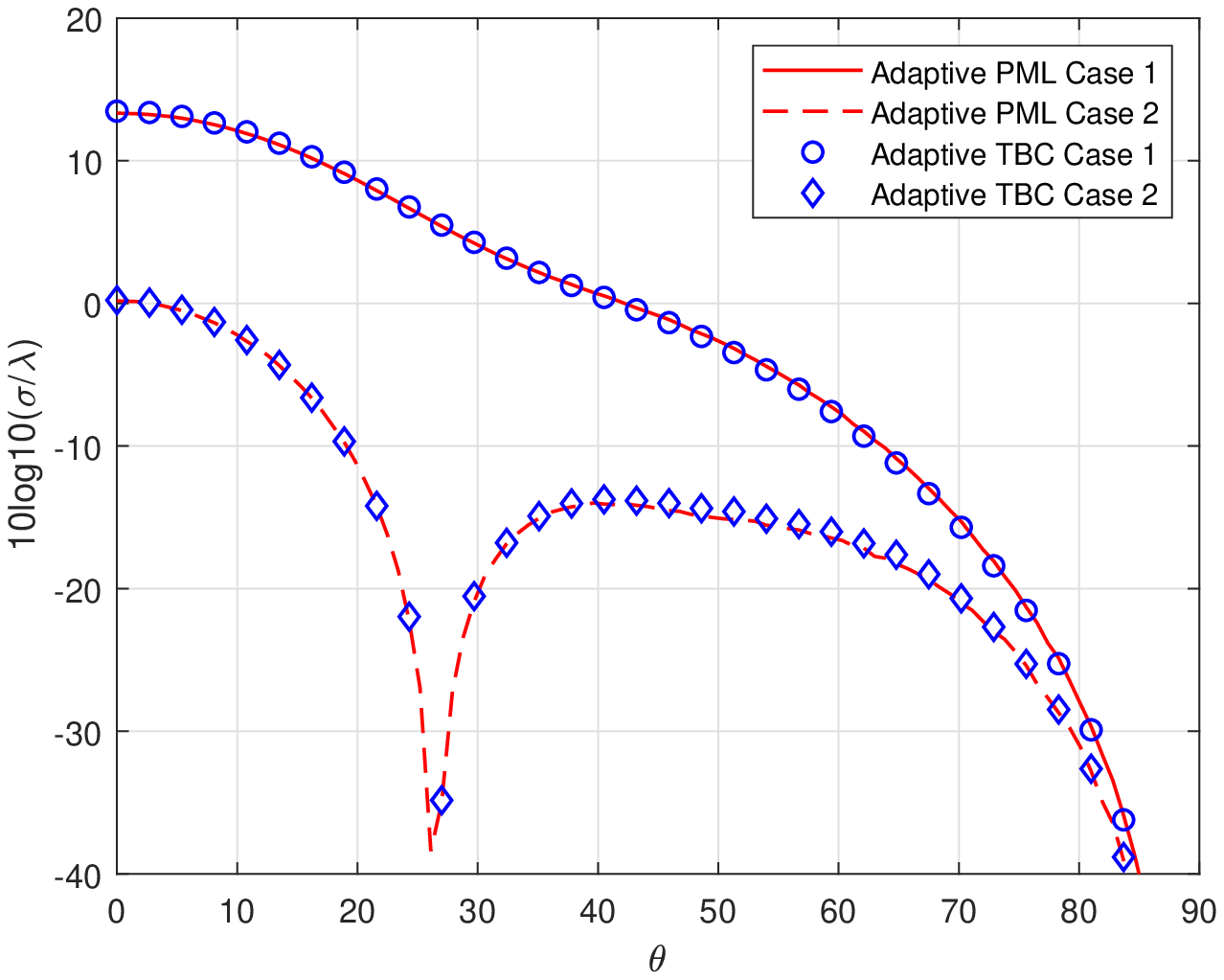}
\caption{Example 1: (left) the cavity geometry; (right) the backscatter RCS
for both cases by using the adaptive PML method and the adaptive TBC method.}
\label{TM_Example1}
\end{figure}

\begin{figure}
\centering
\includegraphics[width=0.45\textwidth]{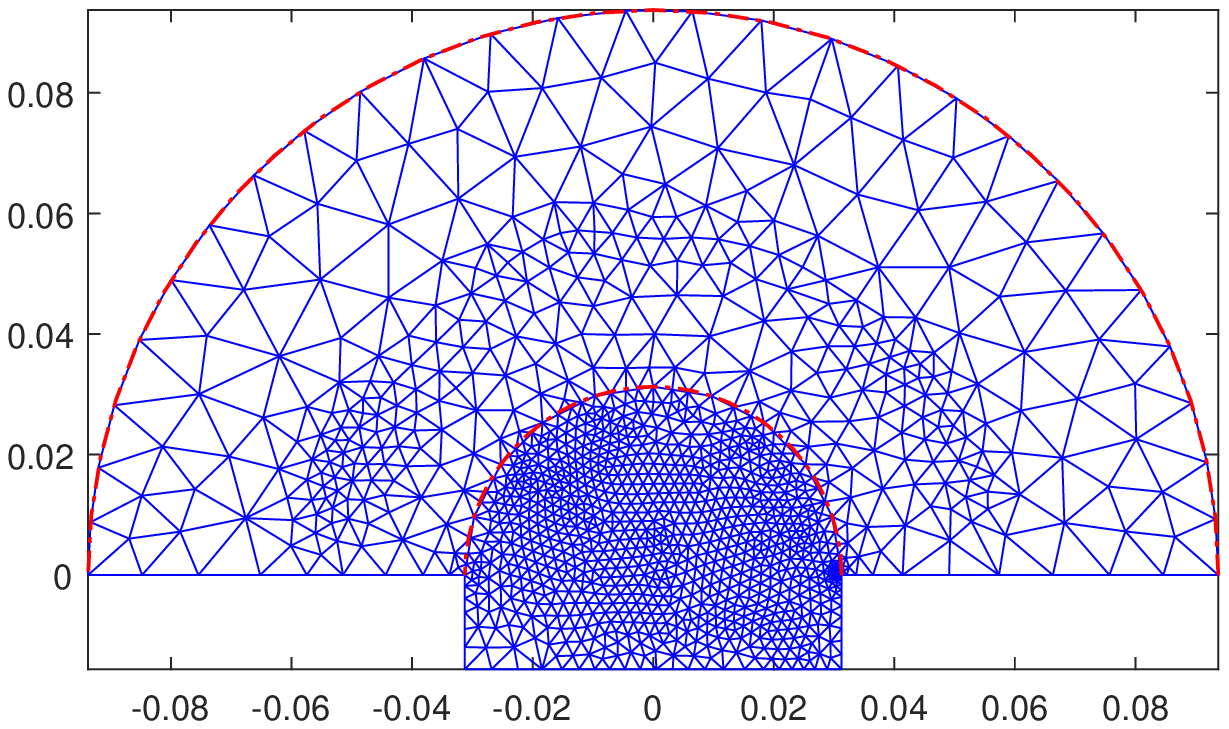}\quad
\includegraphics[width=0.45\textwidth]{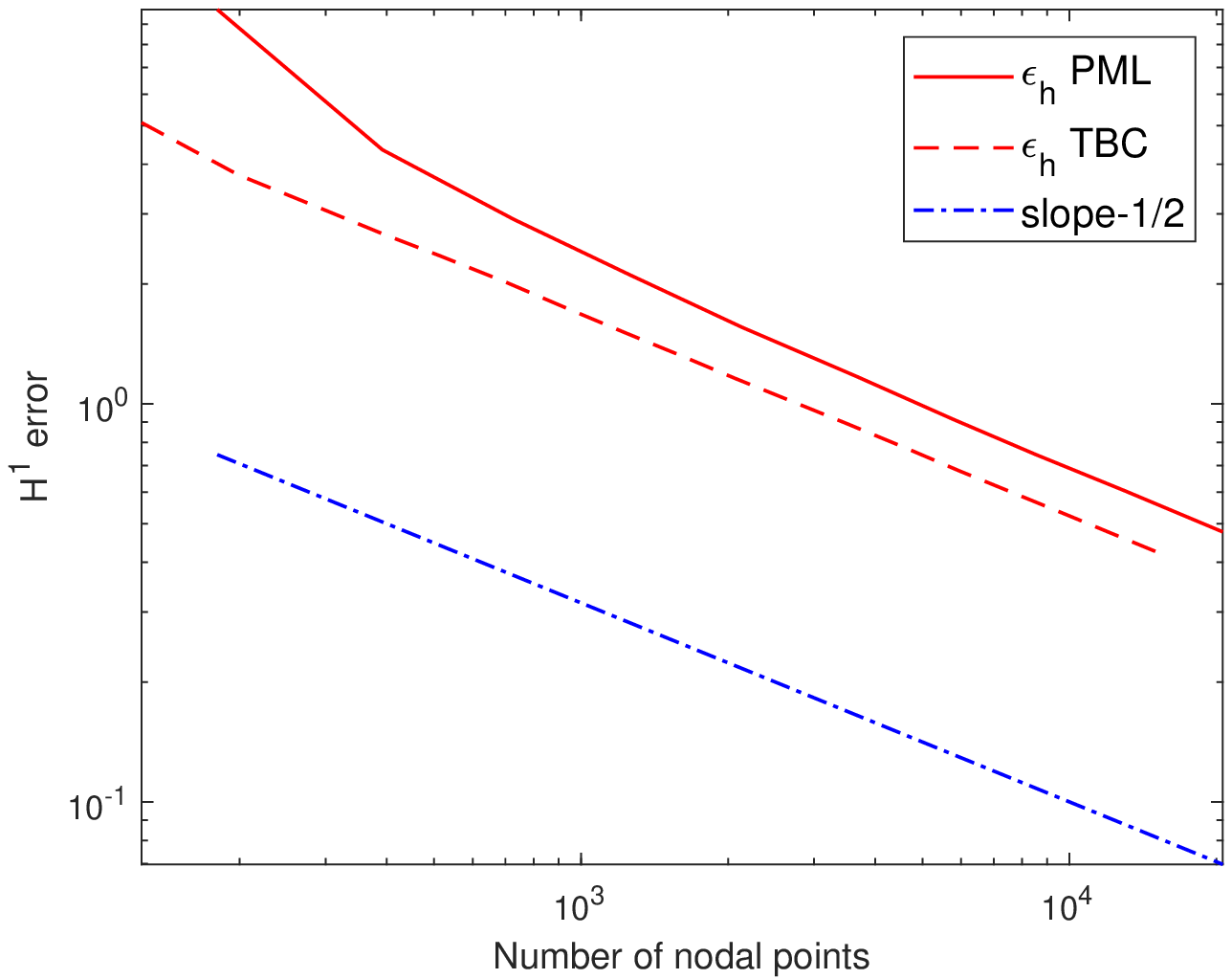}
\caption{Example 1: (left) the adaptive mesh after 3 iterations with a total
number of nodal points $1259$; (right) the quasi-optimality of the a posteriori
error estimates for both of the adaptive PML and TBC methods.}
\label{TM_Example1_2}
\end{figure}

\begin{figure}
\centering
\includegraphics[width=0.45\textwidth]{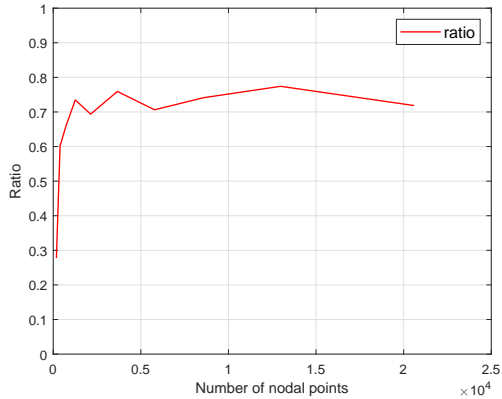}
\caption{Example 1: the ratio of ${\rm DoF}_h$ between the physical domain and
the whole computational domain.}
\label{TM_Example1_3}
\end{figure}

\subsection{Example 2}

In this example, we also consider the TM polarization. The backscatter RCS for
a coated rectangular cavity with width $2.4 \lambda$ and depth $1.6 \lambda$ is
computed. The each vertical side of the cavity wall is coated with a thin layer
of some absorbing material. Figure \ref{TM_Example2} illustrates the geometry of
the cavity and PML setting. The coating on both sides has thickness $0.024
\lambda$ and is made of a homogeneous absorbing material with  a relative
permittivity $\epsilon_{\rm r}=12+0.144 {\,\rm i}$ and a relative permeability
$\mu_{\rm r}=1.74+3.306{\,\rm i}$. This example has a multi-scale feature and
is an interesting benchmark example to test the adaptive method. We take the
same stopping rule as that in Example 1: the adaptive method is stopped once the
number of nodal points is over 15000. Figure \ref{TM_Example2} plots the
backscatter RCS by using the adaptive PML and TBC methods, where
the red solid line stands for the results of the PML method and the blue circles
stand for the results of the TBC method. Clearly, these two methods are
consistent with each other. Using a representative example of incident angle
$\theta=\pi/4$, we present the refined mesh after 2 iterations with 1263 ${\rm
DoF}_h$ and the a posteriori error estimates in Figure \ref{TM_Example2_2}. It
is clear to note that the method can capture the behavior of the numerical
solution in the two thin absorbing layers and displays the quasi-optimality
between the meshes and the associated numerical complexity, i.e.,
$\epsilon_h=\mathcal O({\rm DoF}_h^{-1/2})$ holds asymptotically. As a
comparison, we also show the a posteriori error estimates of the adaptive TBC
method in the red dashed line. The quasi-optimality is also observed for the
adaptive TBC method. Figure \ref{TM_Example2_3} shows the ratio of
${\rm DoF}_h$ between the physical domain and the whole computational domain.
Again, we see that most nodal points are concentrated in the physical domain,r
which illustrates the effectiveness of the adaptivity.

\begin{figure}
\centering
\includegraphics[width=0.45\textwidth]{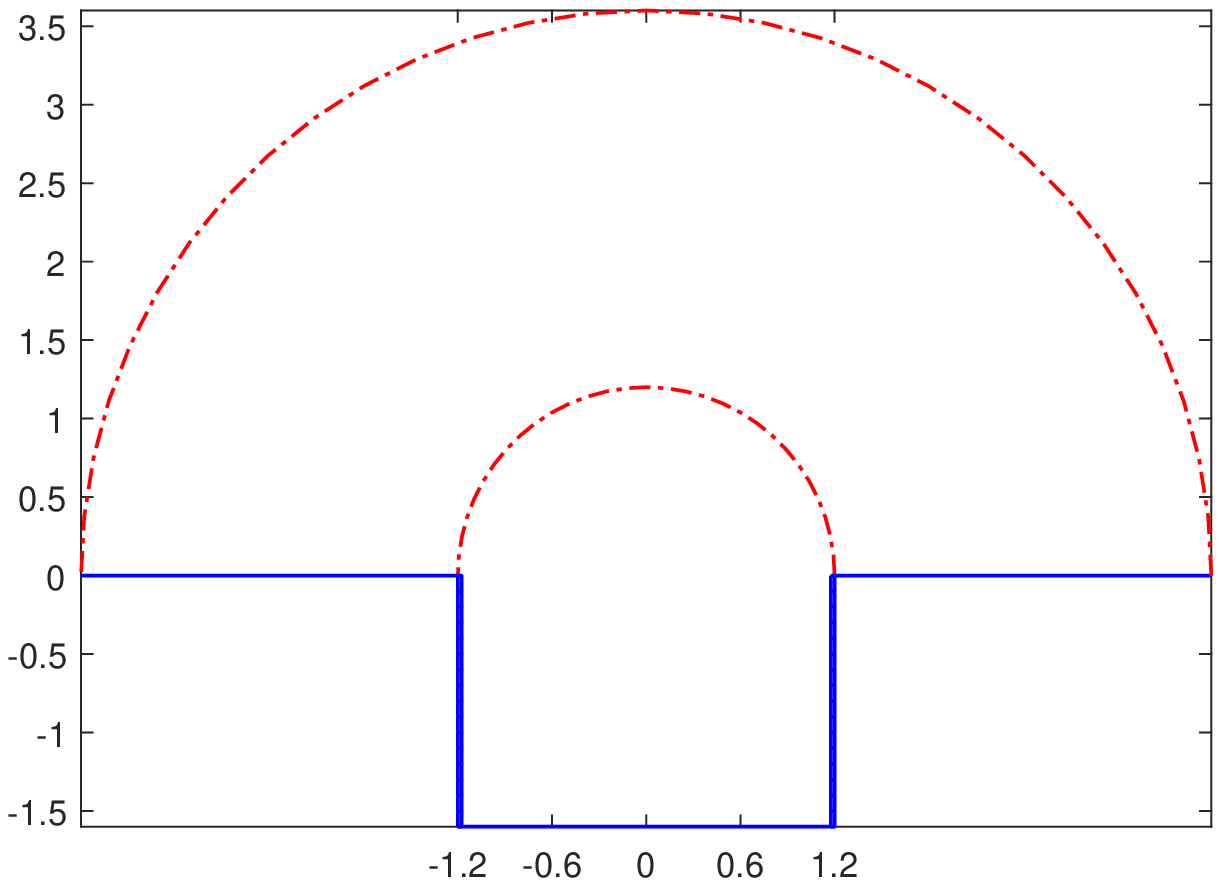}\quad
\includegraphics[width=0.45\textwidth]{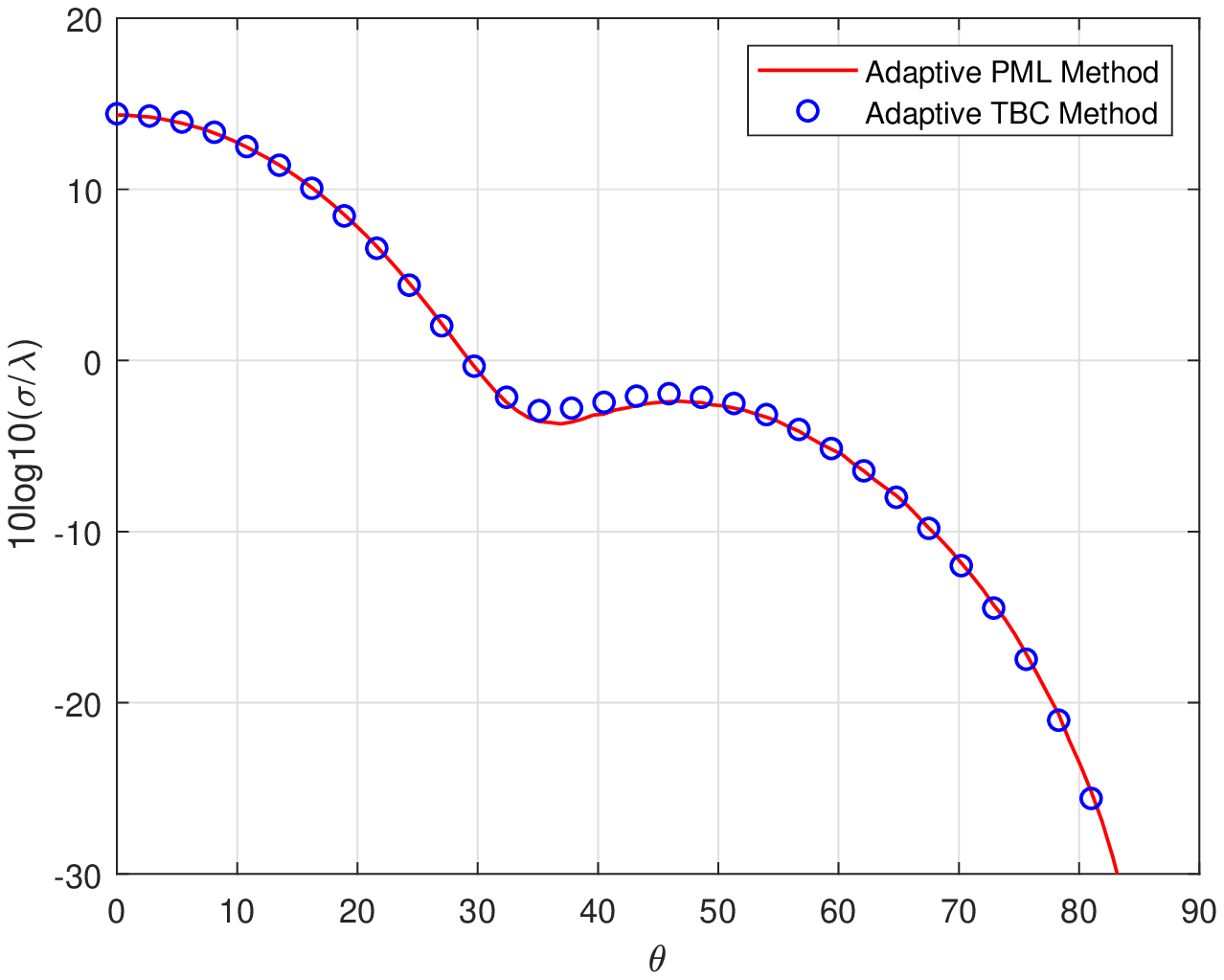}
\caption{Example 2: (left) the cavity geometry; (right) the backscatter RCS
by using the adaptive PML method and the adaptive TBC method.}
\label{TM_Example2}
\end{figure}

\begin{figure}
\centering
\includegraphics[width=0.45\textwidth]{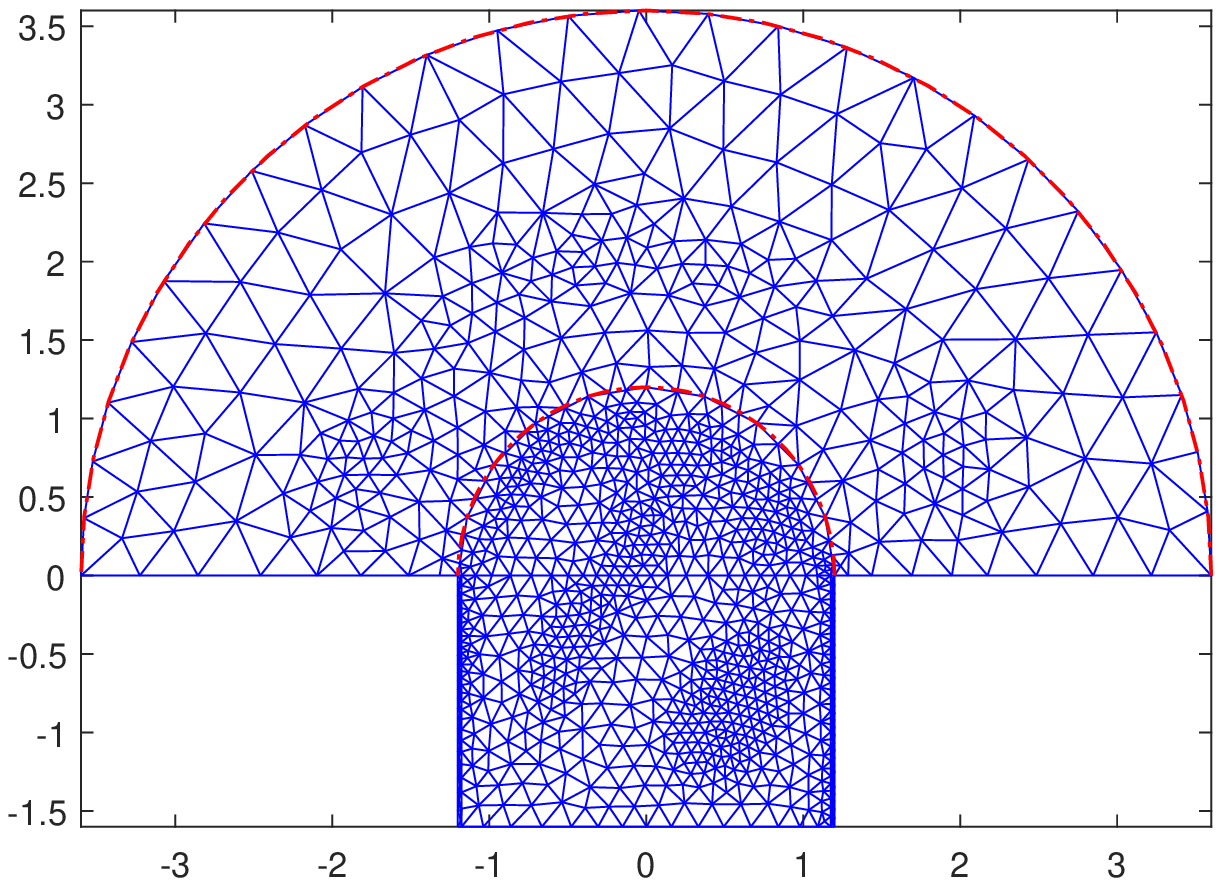}\quad
\includegraphics[width=0.45\textwidth]{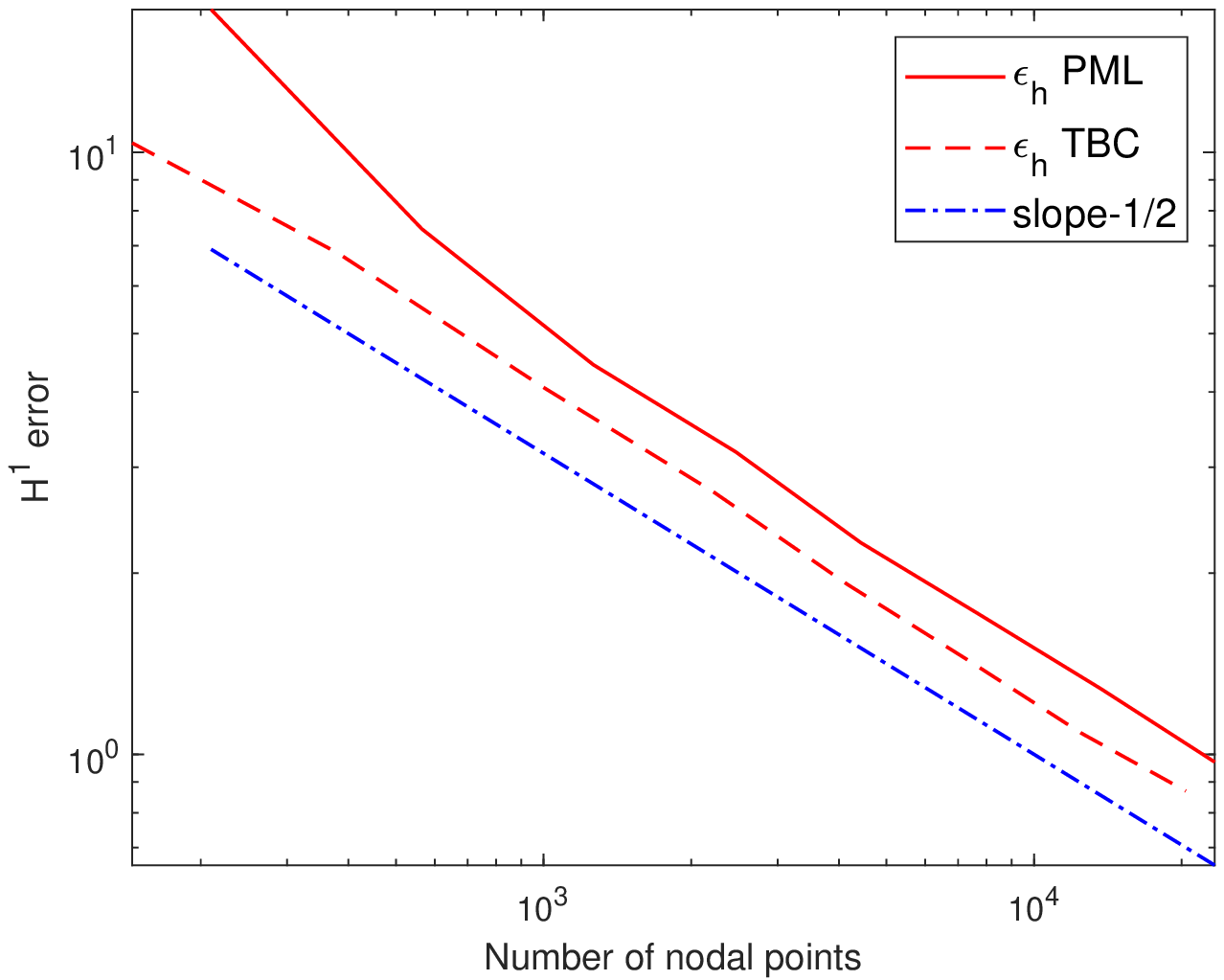}
\caption{Example 2: (left) the adaptive mesh after 2 iterations with a total
number of nodal points $1263$; (right) the quasi-optimality of the a posteriori
error estimates for both of the PML and TBC methods.}
\label{TM_Example2_2}
\end{figure}

\begin{figure}
\centering
\includegraphics[width=0.45\textwidth]{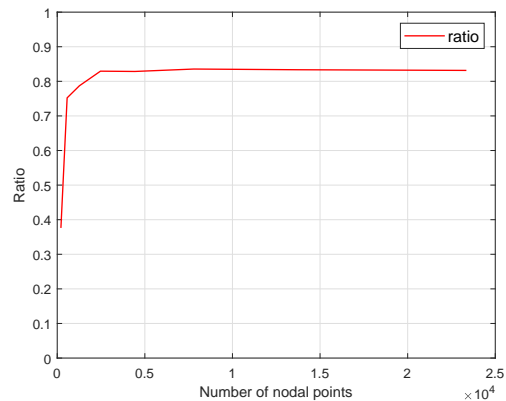}
\caption{Example 2: the ratio of ${\rm DoF}_h$ between the physical domain and
the whole computational domain.}
\label{TM_Example2_3}
\end{figure}

\subsection{Example 3}

This example is still concerned with the TM polarization but some part of the
structure for the cavity sticks out above the ground plane. The width and depth
of the base cavity is $1.2 \lambda$ and $0.8 \lambda$, respectively. There are
two thin rectangular PEC humps in the middle of the cavity. Their width is
$\frac{1}{20}\lambda$ and height is $\frac{16}{15} \lambda$ and $\frac{8}{15}
\lambda$, respectively.  The geometry of the cavity is shown in the left hand
side of Figure \ref{TM_Example3}. The backscatter RCS is computed by using the
adaptive PML and the adaptive TBC method. We also use the red solid line for
the PML method and blue circles for the TBC method. Using the incident angle
$\theta=\pi/4$ as an example, we show the refined mesh after three iterations
with the number of nodal points 1261 and the a posteriori error estimates in
Figure \ref{TM_Example3_2}. The adaptive PML method is able to refined meshes
around the corners of the cavity where the solution has a singularity. The
quasi-optimality is also obtained for the a posteriori error estimates. As a
comparison, we show the a posteriori error estimates for the adaptive TBC method
in the red dashed line. The observation of the a posteriori error estimates for
both methods is the same as that in Examples 1 and 2. Finally, Figure
\ref{TM_Example3_3} shows the ratio of ${\rm DoF}_h$ between the physical
domain and the whole computational domain. Once again, the example confirms
the effectiveness of the adaptive method.

\begin{figure}
\centering
\includegraphics[width=0.45\textwidth]{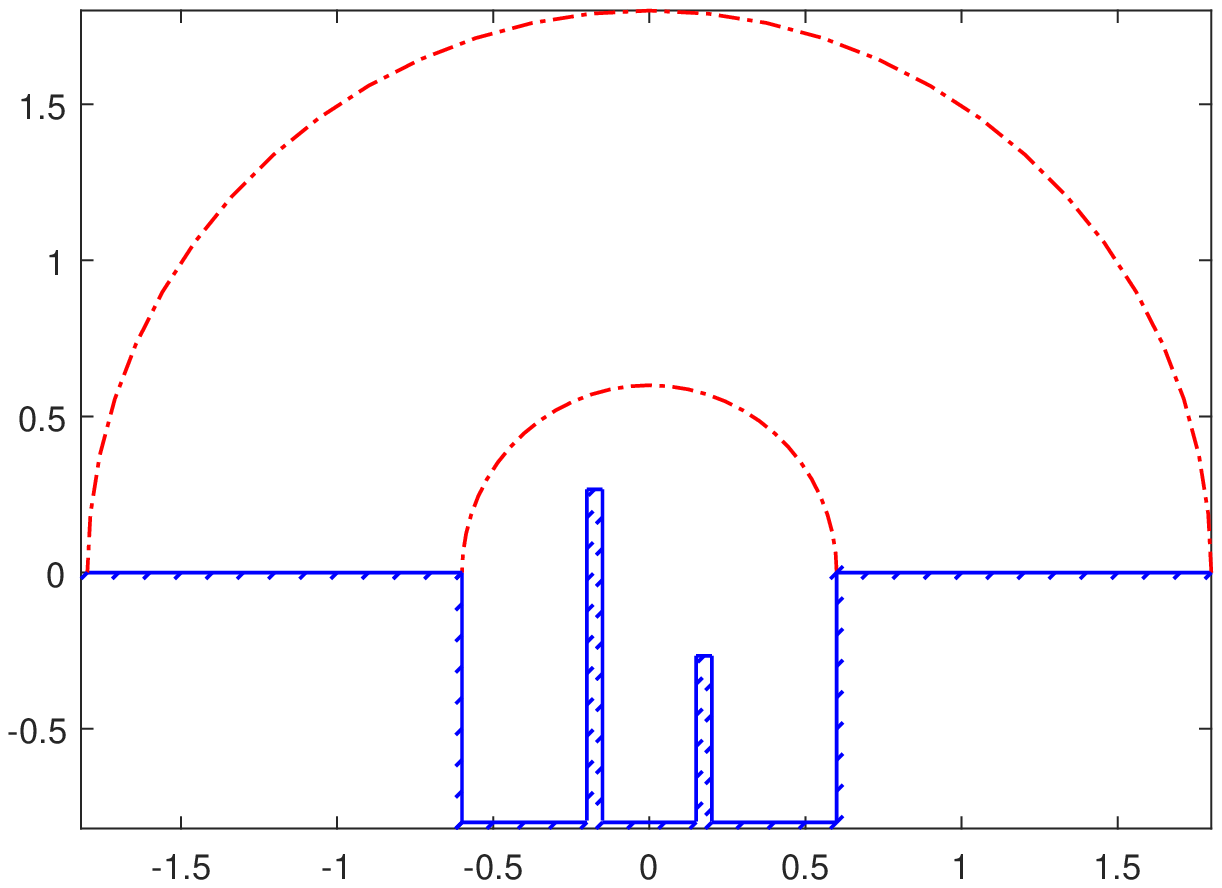}\quad
\includegraphics[width=0.45\textwidth]{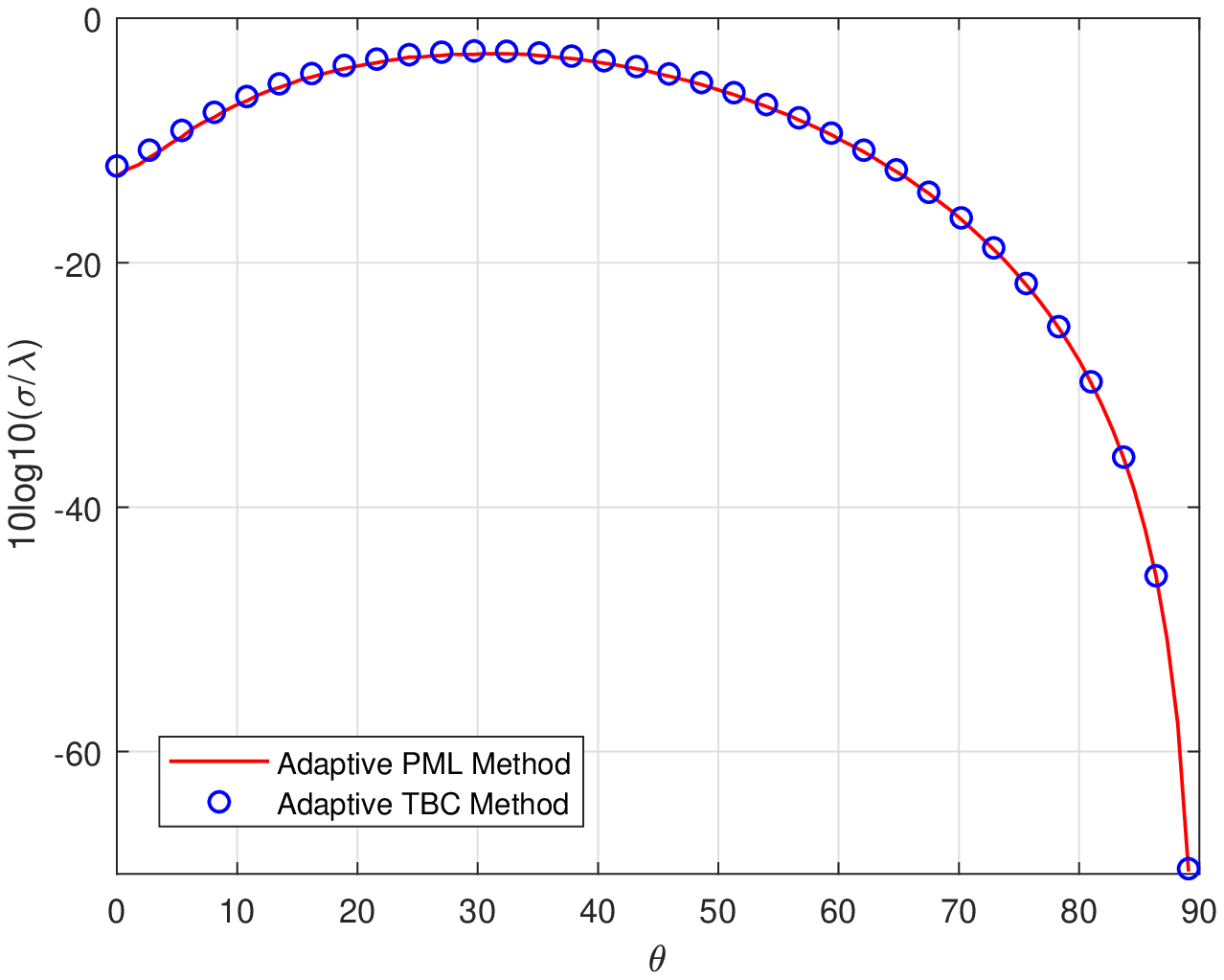}
\caption{Example 3: (left) the cavity geometry; (right) the backscatter RCS by
using the adaptive PML method and the adaptive TBC method.}
\label{TM_Example3}
\end{figure}

\begin{figure}
\centering
\includegraphics[width=0.45\textwidth]{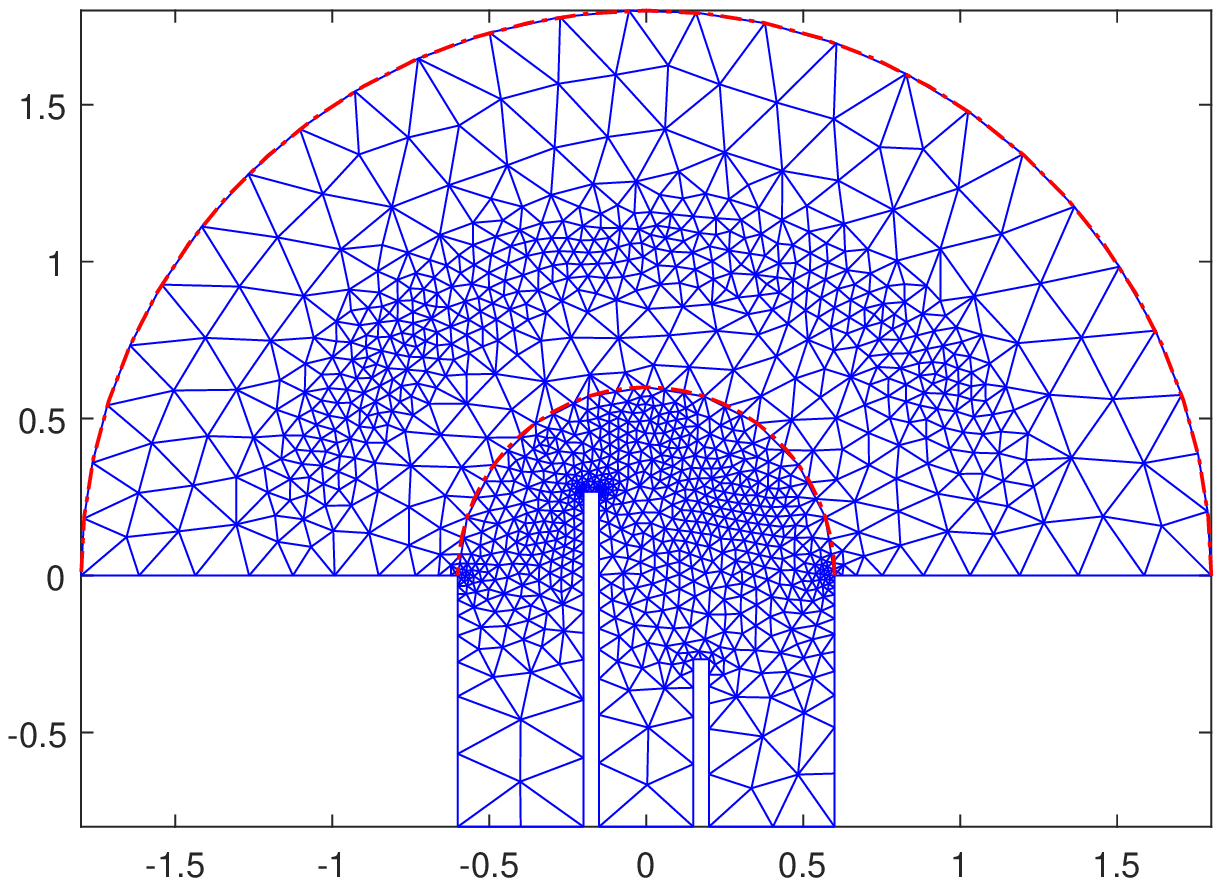}\quad
\includegraphics[width=0.45\textwidth]{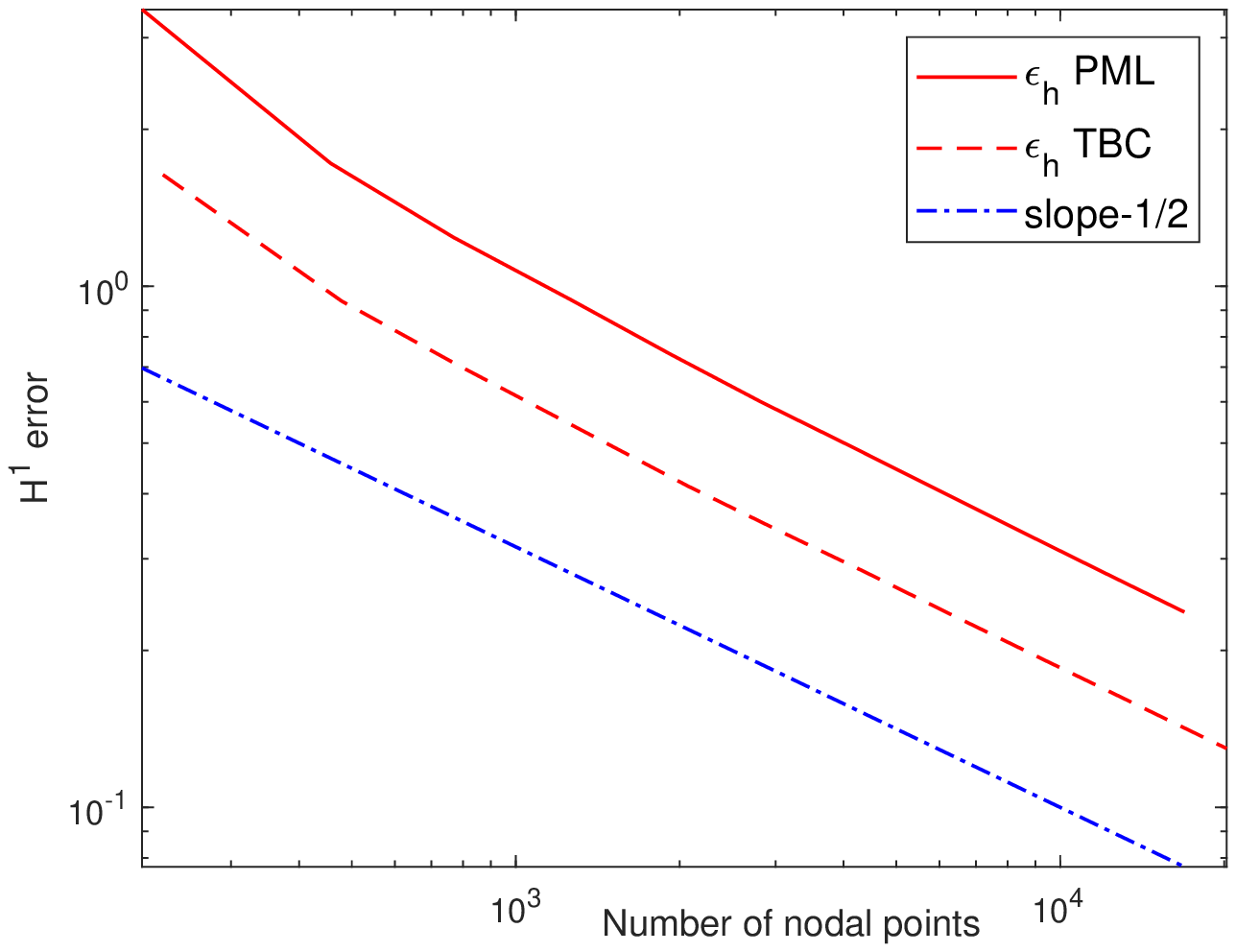}
\caption{Example 3: (left) the adaptive mesh after 3 iterations with a total
number of nodal points $1261$; (right) the quasi-optimality of the a posteriori
error estimates for both of the PML and TBC methods.}
\label{TM_Example3_2}
\end{figure}

\begin{figure}
\centering
\includegraphics[width=0.45\textwidth]{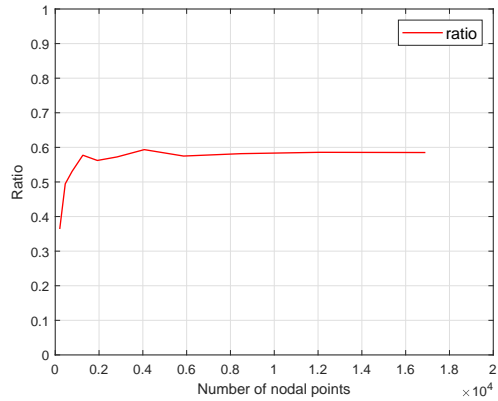}
\caption{Example 3: the ratio of ${\rm DoF}_h$ between the physical domain and
the whole computational domain.}
\label{TM_Example3_3}
\end{figure}

\subsection{Example 4}

In this example, we consider the TE polarized cavity scattering problem. The
cavity is a rectangle with a fixed width 0.025 m and a fixed depth 0.015 m. The
cavity is empty with no filling materials. Instead of considering the
illumination by a plane wave with a fixed frequency, we compute the backscatter
RCS with the frequency ranging from 2 GHz to 18 GHz. Correspondingly, the range
of the aperture of cavity is from $\frac{1}{6}\lambda$ to $1.5 \lambda$. The
incident angle is fixed to be $\frac{4}{9}\pi$. Figure \ref{TE_Example4} shows
the backward RCS by using the adaptive PML and the adaptive TBC method, where
the red solid line and blue circles show their results, respectively. The
stopping criterion is that the mesh refinement is stopped when the number of
nodal points is over 25000. Once again, both methods are consistent with each
other very well.

\begin{figure}
\centering
\includegraphics[width=0.45\textwidth]{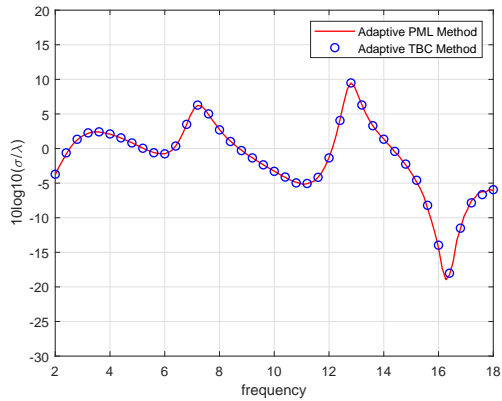}
\caption{Example 4: the backscatter RCS by using the adaptive PML method
and the adaptive TBC method.}
\label{TE_Example4}
\end{figure}

\section{Conclusion}

In this paper, we have presented an adaptive finite element PML method for
solving the open cavity scattering problems. The a posteriori error analysis
is carried out for both of the TM and TE polarizations. In each polarization,
the estimate takes account of the finite element discretization error and
the truncation error of PML method. The latter is shown to decay exponentially
with respect to the PML medium parameter and the thickness of the layer. A
possible future work is to extend our analysis to the adaptive
finite element PML method for solving the three-dimensional cavity
scattering problem, where the wave propagation is governed by Maxwell's
equations.

\end{document}